\newtheorem{remark}{Remark}
\def\3bar{{|\hspace{-.02in}|\hspace{-.02in}|}}
\newcommand{\vertiii}[1]{{\left\vert\kern-0.25ex\left\vert\kern-0.25ex\left\vert #1
    \right\vert\kern-0.25ex\right\vert\kern-0.25ex\right\vert}}
\def\T{\mathcal{T}}
\title{Generalized Weak Galerkin methods for Stokes equations}
\begin{document}
\author{
Wenya Qi
\thanks{School of Mathematics and Information Sciences, Henan Normal University, Xinxiang, Henan 453007, China(qiwymath@163.com).
The research of Qi was partially supported by Natural Science Foundation of Henan Province(Grant No.222300420213).}
 \and Padmanabhan Seshaiyer
  \thanks{Department of Mathematical Sciences, George Mason University, Fairfax, VA 22030, USA(pseshaiy@gmu.edu).}
 \and Junping Wang
\thanks{Division of Mathematical Sciences, National Science Foundation, Alexandria, VA 22314, USA(jwang@nsf.gov). The research of Wang was supported by the NSF IR/D program, while working at National Science Foundation. However, any opinion, finding, and conclusions or recommendations expressed in this material are those of the author and do not necessarily reflect the views of the National Science Foundation.}}

\maketitle
\baselineskip=12pt

\begin{abstract}
A new weak Galerkin finite element method, called generalized weak Galerkin method ({g}WG), is introduced for Stokes equations in this paper by using a new definition of the weak gradient. Error estimates in energy norm and $L^2$ norm for the velocity and $L^2$ norm for the pressure are derived for elements with arbitrary combination of polynomials. Some numerical examples are presented to verify the effectiveness, theoretical convergence orders, and robustness of the proposed scheme.
\end{abstract}

\begin{keywords} Generalized weak gradient, Stokes equations, arbitrary combination of polynomials, error estimates.
\end{keywords}

\begin{AMS}
Primary 65N30; Secondary 65N50
\end{AMS}

\pagestyle{myheadings}

\section{Introduction}\label{sec:1}
Most physical phenomena governing fluid flow motion in various real-world applications arising in biology, medicine, meteorology, oceanography, manufacturing, etc. can be described using the Navier-Stokes equations \cite{Temam2001}. Some of these applications include, for example, viscous flow past a cubic array of spheres \cite{HASIMOTO1958},  the  dynamics of airflow over an airplane wing in engineering \cite{Nordanger2015},
the flow of air currents in meteorology \cite{Poul2008}, fluid flow in ocean circulation \cite{Hill1996}, the transport of virus in the air or other medium \cite{Li2020} and other multiphysics applications \cite{Aulisa2008}.
There are many other physical applications where the advective inertial forces are small in comparison to viscous forces which can be modeled by
Stokes equations instead of Navier Stokes equations. Such applications include swimming of microorganisms to the flow of lava.

One of the most powerful numerical methods that has been well studied for numerically solving the Stokes equations is finite element method \cite{Arnold1984, Girault1979, Verfurth1989, Stenberg1990, Kechkar1992}.
A $hp$ discontinuous Galerkin method was analyzed in \cite{Toselli2002} for Stokes equations with a positive-definite non-symmetric velocity bilinear form
on quadrilateral or hexahedral mesh.
In \cite{Cockburn2002}, a local discontinuous Galerkin approximation was employed for Stokes equations.
By modifying saddle-point Lagrangian functional, stabilized mixed finite element methods were presented in \cite{Bochev2006}
and the optimal error analysis was derived on lowest order elements for Stokes equations.
A new finite element method with H(div) elements was developed and analyzed for Stokes equations in \cite{wang2007new, wang2009a}.
The optimal orders were obtained for the velocity and pressure approximation and some examples of H(div) elements were provided.
In \cite{Kondratyuk2008}, an adaptive finite element method with three nested loops was introduced and analyzed for Stokes equations.
In view of polynomials with the same degree for the velocity and pressure, a priori error estimate of hybridizable discontinuous Galerkin method
 was established for Stokes equations in \cite{Cockburn2011}.
In \cite{wang2013stokes}, a general finite volume formulation has been derived for Stokes equations with a unified posteriori error estimator.
By employing a weakly over-penalized symmetric interior penalty for velocity, a mixed finite element method was introduced for Stokes problem in \cite{Barker2014},
and the error estimates were established based on the discontinuous linear element for velocity and constant for pressure dealing with more general nonconforming meshes.
A coupled mortar finite element formulation was presented for incompressible Stokes equations in primal velocity-pressure variables where the local approximations were designed using divergence stable hp-mixed finite elements \cite{Chilton2002}.
By constructing a conforming and divergence-free finite element, an error estimate for Stokes equations was derived with the lowest order element in \cite{Guzman2014}.
A virtual element space was considered for Stokes equations in \cite{Veiga2017}.

Weak Galerkin (WG) finite element method has been developed for second order elliptic equations in \cite{Wang2013weak, Wang2014a} by introducing weak functions and discrete weak gradients.
The weak Galerkin method uses the discontinuous functions and employs the polygonal or polyhedron mesh partition which makes the formulation as simple as conforming finite element method.
One  advantage of the weak Galerkin method is that the polynomial approximation and the shape of element can be chosen flexibly, and another advantage is that the numerical formulation is simpler to implement.
By using the element $([P_{k+1}(T)]^d,[P_{k}(\partial T)]^d)$ for the velocity and $P_{k}(T)$ for the pressure,
a weak Galerkin method has been established for the Stokes equations in \cite{wang2016stokes}, where the weak gradient space was polynomial $[P_{k}(T)]^{d\times d}$
  and weak divergence space was $ P_{k}(T)$ for velocity. Here, $k\geq 0$ is arbitrary non-negative integer.
  Moreover, the partition of domain was composed of arbitrary polygons/polyhedra satisfying some shape regular conditions.
  In \cite{Mu2018},  for the lowest order weak Galerkin element, a divergence free formulation was constructed to eliminate pressure variable.
  A simplified weak Galerkin method was presented for Stokes equations in \cite{liu2019} and the optimal error estimates were established in $H^1$ and $L^2$ norm.
 In \cite{Bao2019},  a posteriori error estimator applied on general polygonal meshes was discussed for Stokes equations where the elements was $([P_{k+1}(T)]^d,[P_{k+1}(\partial T)]^d)$ for the velocity
 and $P_{k}(T)$ for the pressure.
 Using a velocity reconstruction process,  a pressure-robust weak Galerkin finite element method was developed in \cite{Mu2020} and optimal order of convergence was established. 

 In this paper, we will extent the weak Galerkin finite element spaces to arbitrary polynomial combination by using a new weak gradient definition for Stokes equations and analyze the convergence.
 For the classical weak Galerkin method, the polynomials of elements have been flexible with general polygon mesh.
 At first,  the elements $(P_{k}(T),P_{k+1}(\partial T),[P_{k+1}(T)]^d)$ and $(P_{k}(T),P_{k}(\partial T),RT_{k}(T))$  were employed for second order elliptic problems in \cite{Wang2013weak}.
The weak Galerkin method has employed the element $(P_{k+1}(T),P_{k+1}(\partial T),[P_{k}(T)]^d)$ for elliptic interface problems in \cite{Mu2016A}  with a parameter free stabilizer.
 Then,  by using a stabilizer,  a polynomial reduction weak Galerkin element $(P_{k+1}(T),P_{k}(\partial T),[P_{k}(T)]^d)$ was presented in \cite{Mu2015}.
 In \cite{wang2018a}, the arbitrary polynomials elements were considered with a new stabilizer for elliptic problems, and the classical weak gradient definition was introduced.
Despite all these efforts, there are still some restrictions on the polynomial orders of weak functions and weak gradients. The generalized weak Galerkin method in this paper is based on a new definition of the weak gradient and shall relax all these existing restrictions. For example, the element $(P_1(T),P_0(\partial T),[P_1(T)]^d)$ has no convergence with the classical weak gradient in \cite{wang2018a}, but it does converge with the generalized weak Galerkin method.
 For Stokes equations we will introduce two stabilizers; one for the velocity and the other for the pressure. Optimal order of convergence is establised for the velocity approximation in $H^1$ and $L^2$ norms, as well as for the pressure approximation in $L^2$.

The paper is organized as follows. In Section \ref{sec:2}, a generalized weak gradient is defined and some preliminary properties are presented.
In Section \ref{sec:3}, the generalized weak Galerkin scheme is formulated for Stokes equations and a new {\em inf-sup} condition is derived to deal with the uniqueness and existence of the numerical solutions.
In Section \ref{sec:4}, some error equations  are  derived by using the properties of various $L^2$ projection operators.
In Section \ref{sec:5}, some error estimates are first established for the velocity in $H^1$ norm and for the pressure in $L^2$ norm, and then in $L^2$ norm for the velocity approximation through a duality argument.
In Section \ref{sec:6}, some numerical results are presented to illustrate the accuracy and efficiency of  our method.

In this paper, we denote positive numbers by $\epsilon_i > 0$,  and $C$ is reserved for a generic constant independent of the mesh size $h$. We also follow the usual notations for Sobolev spaces and the corresponding norms \cite{adams2003sobolev}.

\section{ Generalized {\color{blue}{Weak}} Gradient and Divergence}\label{sec:2}
Consider the model Stokes problem that seeks the velocity $\mathbf{u}$ and the pressure $p$ satisfying
\begin{equation}\label{stokeproblem}
\begin{aligned}
-\nabla \cdot(A \nabla \mathbf{u})+\nabla p&=\mathbf{f},~~~~~~~~~~\mbox{in}~\Omega, \\
\nabla\cdot\mathbf{u}&=0,~~~~~~~~~~\mbox{in}~\Omega,\\
\mathbf{u}&=0, ~~~~~~~~~~\mbox{on}~\partial\Omega,
\end{aligned}
\end{equation}
where $\Omega$ is an open bounded polygonal or polyhedral domain in $\mathbb{R}^d\ (d=2,3)$, $\mathbf{f}$ is the external body force, and
$A$ is a $d\times d$ matrix-valued function with respect to the fluid viscosity in $\Omega$. Assume that $A$ is symmetric and uniformly positive definite; i.e., there exist two positive numbers
$\lambda_1, ~\lambda_2$ such that
\begin{equation*}
\begin{aligned}
\lambda_1\xi^t\xi \leq \xi^tA\xi \leq \lambda_2\xi^t\xi,~~\xi\in \mathbb{R}^d,
\end{aligned}
\end{equation*}
where $\xi^t$ is the transpose of the column vector $\xi$.

A weak formulation of the problem \eqref{stokeproblem} reads as follows: find $\mathbf{u}\in[H^1_0(\Omega)]^d$ and $p\in L_0^2(\Omega)$ satisfying
\begin{equation}\label{weakform}
\begin{aligned}
(A \nabla \mathbf{u}, \nabla \mathbf{v})-(p,\nabla\cdot\mathbf{v})&=(\mathbf{f},\mathbf{v}), ~~~~~~\forall \mathbf{v}\in[H^1_{0}(\Omega)]^d,\\
(\nabla\cdot\mathbf{u}, q)&=0,~~~~~~~~~~~\forall q\in L_0^2(\Omega).
\end{aligned}
\end{equation}
Here, we denote the Sobolev spaces by
\begin{equation*}
\begin{aligned}
[H^1_{0}(\Omega)]^d&=\{\mathbf{v}\in [H^1(\Omega)]^d, ~\mathbf{v}=\mathbf{0} ~\mbox{on} ~\partial \Omega\},\\
L_0^2(\Omega)&=\Big\{q\in L^2(\Omega), \int_{\Omega}q dx = 0\Big\}.
\end{aligned}
\end{equation*}

Let $\mathcal{T}_h$ be a polygonal or polyhedral partition of the domain $\Omega$ satisfying the shape regular conditions in \cite{Wang2014a}. For each element $T\in\mathcal{T}_h$, $h_T$ is its diameter and $h=\displaystyle\max_{T\in\mathcal{T}_h}h_T$ is the mesh size of the partition $\mathcal{T}_h$. Denote by $\partial T$ the edges or flat faces of the element $T\in\T_h$ and $\mathcal{E}_{I}$ the set of all interior edges or flat faces. For each edge or flat face $e\in \mathcal{E}_{I}$, denote by $h_e$ its diameter. Denote by ${P}_{k}(T)$ the polynomial space of degree less than or equal to $k$ in all variables on the element $T$.
We consider the following finite element spaces,
\begin{equation*}
\begin{aligned}
 \mathbf{V}_{h}:=&\{(\mathbf{v}_0, \mathbf{v}_b): \mathbf{v}_0|_{T}\in[{P}_{k}(T)]^d, \mathbf{v}_b|_{\partial T}\in[{P}_{j}(\partial T)]^d,~ \forall T \in \mathcal{T}_{h}\},\\
W_{h}:=&\{q\in L_0^2(\Omega): q|_{T}\in{P}_{n}(T), ~\forall T \in \mathcal{T}_{h}\},
 \end{aligned}
\end{equation*}
where $k, j, n\geq 0$ are arbitrary non-negative integers.
Denote by $\mathbf{V}_{h}^0$ the subspace of $\mathbf{V}_{h}$ consisting of functions with vanishing boundary value; i.e.,
$$
\mathbf{V}_{h}^0=\{\mathbf{v}\in \mathbf{V}_{h}, ~\mathbf{v}_b=\mathbf{0} ~\mbox{on} ~\partial \Omega\}.
$$

Denote by $Q_h =\{Q_0, Q_b\}$ the operator onto $\mathbf{V}_{h}$, where $Q_0: [L^2(T)]^d \rightarrow [P_k(T)]^d$ and $Q_b: [L^2(\partial T)]^d \rightarrow [P_j (\partial T)]^d$ are the
local $L^2$ projection operators. Furthermore, denote by $Q_h^p: L_0^2(\Omega) \rightarrow W_h$ the usual $L^2$ projection operator onto $W_h$.

\begin{definition}\label{weak-grad}
For each $\mathbf{v}\in \mathbf{V}_h$, the generalized discrete weak gradient, denoted by $\nabla_w\mathbf{v}$, is given on each element $T\in\T_h$  by
\begin{equation}\label{weakgradient}
\nabla_w\mathbf{v}|_T=\nabla\mathbf{v}_0|_T+\delta_w\mathbf{v}|_T,
\end{equation}
where $\delta_w\mathbf{v} |_T\in [P_l(T)]^{d\times d}$ is computed as follows:
\begin{equation}\label{weakgradient-2}
(\delta_w\mathbf{v}, \phi)_T=\langle \mathbf{v}_b- Q_b\mathbf{v}_0,  \phi\cdot\mathbf{n}  \rangle_{\partial T}, ~\forall \phi\in [P_l(T)]^{d\times d},
\end{equation}
where $\mathbf{n}$ denotes the unit outward normal vector to $\partial T$.
\end{definition}

\begin{definition}\label{weakdivergence}
For each $\mathbf{v}\in \mathbf{V}_h$, the discrete weak divergence $\nabla_w\cdot\mathbf{v}\in P_m(T)$ is defined such that
\begin{equation}\label{weak-div}
(\nabla_w\cdot\mathbf{v}, \psi)_T = - (\mathbf{v}_0, \nabla \psi)_T+\langle \mathbf{v}_b, \psi\cdot\mathbf{n} \rangle_{\partial T}, ~\forall\psi\in P_m(T).
\end{equation}
 \end{definition}

\section{Numerical Scheme of the Generalized Weak Galerkin}\label{sec:3}
The generalized weak Galerkin finite element method for the Stokes problem \eqref{stokeproblem} seeks $\mathbf{u}_h\in \mathbf{V}_{h}^0$ and $p_h\in W_{h}$ satisfying
\begin{equation}\label{sch:WG_FEM}
\left\{
\begin{aligned}
(A \nabla_w \mathbf{u}_h, \nabla_w \mathbf{v})-(p_h,\nabla_w\cdot\mathbf{v})+s_1(\mathbf{u}_h, \mathbf{v})&=(\mathbf{f},\mathbf{v}_0), ~~~~~~\forall~ \mathbf{v}\in\mathbf{V}_{h}^0,\\
(\nabla_w\cdot\mathbf{u}_h, q_h)+s_2(p_h, q_h)&=0,~~~~~~~~~~~~~\forall~q_h\in W_{h}.
\end{aligned}\right.
\end{equation}
Here, the stabilizing terms are defined as follows
\begin{equation*}
\begin{aligned}
s_1(\mathbf{u}_h, \mathbf{v})&=\sum_{T\in\mathcal{T}_h}h_T^{-{\gamma}}\langle \mathbf{u}_b-Q_b\mathbf{u}_0, \mathbf{v}_b-Q_b\mathbf{v}_0\rangle_{\partial T},\\
s_2(p_h, q_h)&=\mu\sum_{e\in\mathcal{E}_{I}}h_e^{-\beta}\langle \llbracket p_h \rrbracket, \llbracket q_h \rrbracket \rangle_e,
\end{aligned}
\end{equation*}
where $\llbracket q_h \rrbracket=q_h|_{\partial T_1\cap e}-q_h|_{\partial T_2\cap e}$ denotes the jump on the edge or flat face $e\in \mathcal{E}_{I}$ shared by two elements $T_1$ and $T_2$.
Assume that $\vert\gamma\vert<\infty$, $\vert\beta\vert<\infty$,  and $\mu$ is a non-negative number.

For each $\mathbf{v}\in \mathbf{V}_h$, we define the energy semi-norm by
$$\interleave \mathbf{v} \interleave^2=(A \nabla_w \mathbf{v}, \nabla_w \mathbf{v})+s_1(\mathbf{v}, \mathbf{v}).$$

\begin{lemma}\label{lem:norm}
$\interleave \cdot\interleave$ defines a norm in the space $\mathbf{V}_h^0$.
\end{lemma}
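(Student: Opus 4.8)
The plan is to notice first that $\interleave\cdot\interleave$ is automatically a seminorm, so the whole statement reduces to positive definiteness. Both ingredients $(A\nabla_w\cdot,\nabla_w\cdot)$ and $s_1(\cdot,\cdot)$ are symmetric, bilinear, and positive semidefinite — the former because $A$ is symmetric positive definite, the latter because each weight $h_T^{-\gamma}$ is positive — so their sum is a symmetric positive semidefinite bilinear form. Consequently $\interleave\cdot\interleave$ inherits homogeneity and the triangle inequality directly from the Cauchy--Schwarz inequality associated with that form. The only thing genuinely requiring proof is the implication $\interleave\mathbf{v}\interleave=0\Rightarrow\mathbf{v}=\mathbf{0}$ for $\mathbf{v}\in\mathbf{V}_h^0$.

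Accordingly I would assume $\interleave\mathbf{v}\interleave=0$. Since both contributions are nonnegative — using $(A\nabla_w\mathbf{v},\nabla_w\mathbf{v})\ge\lambda_1\|\nabla_w\mathbf{v}\|^2$ from the lower bound on $A$ — they must vanish separately, so $\nabla_w\mathbf{v}=0$ on every $T$ and $s_1(\mathbf{v},\mathbf{v})=0$. The vanishing of $s_1$ forces $\mathbf{v}_b=Q_b\mathbf{v}_0$ on each $\partial T$. Here comes the one method-specific point: because the generalized gradient is $\nabla_w\mathbf{v}=\nabla\mathbf{v}_0+\delta_w\mathbf{v}$, the condition $\nabla_w\mathbf{v}=0$ alone does not annihilate $\nabla\mathbf{v}_0$. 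But substituting $\mathbf{v}_b-Q_b\mathbf{v}_0=0$ into the defining relation \eqref{weakgradient-2} gives $(\delta_w\mathbf{v},\phi)_T=0$ for all $\phi\in[P_l(T)]^{d\times d}$, hence $\delta_w\mathbf{v}=0$; combined with $\nabla_w\mathbf{v}=0$ this yields $\nabla\mathbf{v}_0=0$ on each $T$, so $\mathbf{v}_0$ is a constant on every element.

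Finally I would propagate this elementwise constant to a global zero. Since $\mathbf{v}_0|_T$ is constant and $P_j(\partial T)$ contains the constants ($j\ge0$), we have $Q_b\mathbf{v}_0=\mathbf{v}_0|_T$, whence $\mathbf{v}_b=\mathbf{v}_0|_T$ on $\partial T$. Because $\mathbf{v}_b$ is single-valued on each interior edge/face in $\mathcal{E}_I$, the constant values of $\mathbf{v}_0$ on two elements sharing a face must coincide; by connectivity of $\Omega$ this makes $\mathbf{v}_0$ a single global constant $\mathbf{c}$ and $\mathbf{v}_b\equiv\mathbf{c}$. The homogeneous boundary condition $\mathbf{v}\in\mathbf{V}_h^0$, i.e. $\mathbf{v}_b=\mathbf{0}$ on $\partial\Omega$, then forces $\mathbf{c}=\mathbf{0}$, giving $\mathbf{v}_0=\mathbf{0}$ and $\mathbf{v}_b=\mathbf{0}$, that is $\mathbf{v}=\mathbf{0}$.

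The main obstacle I anticipate is this last propagation step rather than the algebra: one must combine the single-valuedness of the edge unknown $\mathbf{v}_b$ with the connectedness of the mesh to patch the per-element constants into one, and then invoke the boundary trace to kill it — this is precisely where the zero boundary condition built into $\mathbf{V}_h^0$ is essential, since the assertion fails on the full space $\mathbf{V}_h$. A secondary subtlety worth stating carefully is the interplay above: for the generalized weak gradient, neither $s_1(\mathbf{v},\mathbf{v})=0$ nor $\nabla_w\mathbf{v}=0$ is individually enough to conclude $\nabla\mathbf{v}_0=0$, so both pieces of the seminorm must be used in tandem.
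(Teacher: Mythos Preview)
Your proposal is correct and follows essentially the same approach as the paper's proof: reduce to positive definiteness, use $s_1(\mathbf{v},\mathbf{v})=0$ to get $\mathbf{v}_b=Q_b\mathbf{v}_0$, feed this into \eqref{weakgradient-2} to kill $\delta_w\mathbf{v}$, conclude $\nabla\mathbf{v}_0=0$, and then propagate the piecewise constant to zero via the boundary condition. Your write-up is in fact more explicit than the paper's on the final step --- the paper compresses the single-valuedness/connectivity argument into one sentence, whereas you spell it out --- but the logical skeleton is identical.
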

\begin{proof}
It suffices to show that for each $\mathbf{v}\in V^0_h$,  $\interleave \mathbf{v}\interleave=0$ implies $\mathbf{v}\equiv 0$. To this end, assume $\interleave \mathbf{v}\interleave=0$. From its definition we have $\nabla_w\mathbf{v}=0$ and $\mathbf{v}_b=Q_b\mathbf{v}_0$. For each $\phi\in[P_l(T)]^{d\times d}$,
we get from Definition \ref{weakgradient}
\begin{equation*}
\begin{aligned}
(\delta_w\mathbf{v},\phi)=\langle \mathbf{v}_b- Q_b\mathbf{v}_0,  \phi\cdot\mathbf{n}  \rangle=0,
\end{aligned}
\end{equation*}
It follows that $\delta_w\mathbf{v}=0$, which further leads to $0=\nabla_w\mathbf{v}=\nabla\mathbf{v}_0+\delta_w\mathbf{v}=\nabla\mathbf{v}_0$. Thus, $\mathbf{v}_0$ must be a piecewise constant vector field so that $\mathbf{v}_0=Q_b\mathbf{v}_0 = \mathbf{v}_b$ on each interior edge/face $e$. Finally, from $\mathbf{v}\in \mathbf{V}_h^0$ we have $\mathbf{v}\equiv 0$.
\end{proof}

In order to derive an {\em inf-sup} condition, we denote the bilinear form $b(\mathbf{v}, q_h):=(\nabla_w\cdot\mathbf{v}, q_h)$.
\begin{lemma}\label{lem:inf-sup}
Assume that $n\leq \min \{m, k+1\}$.
For each $q_h\in W_h$, there exists $\mathbf{v}\in \mathbf{V}_h^0$ such that
\begin{equation}\label{inf_supconditon}
\left\{
\begin{aligned}
b(\mathbf{v}, q_h) &\geq \frac{1}{2}\Vert q_h\Vert^2-Ch^{\beta+1}s_2(q_h,q_h),\\
\interleave \mathbf{v}\interleave&\leq C(1+h^{\frac{1-\gamma}{2}}) \Vert q_h\Vert.
\end{aligned}\right.
\end{equation}
\end{lemma}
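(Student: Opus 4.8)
The plan is to construct the required $\mathbf{v}$ by projecting a continuous velocity field obtained from the classical surjectivity of the divergence operator for the continuous Stokes problem. First I would invoke the well-known stability result: since $q_h\in W_h\subset L_0^2(\Omega)$, there exists $\tilde{\mathbf{v}}\in[H_0^1(\Omega)]^d$ with $\nabla\cdot\tilde{\mathbf{v}}=q_h$ and $\|\tilde{\mathbf{v}}\|_1\leq C\|q_h\|$. I then set $\mathbf{v}=Q_h\tilde{\mathbf{v}}=\{Q_0\tilde{\mathbf{v}},Q_b\tilde{\mathbf{v}}\}$; because $\tilde{\mathbf{v}}=0$ on $\partial\Omega$ we have $Q_b\tilde{\mathbf{v}}=0$ there, so $\mathbf{v}\in\mathbf{V}_h^0$ as required.

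The heart of the argument is an exact identity for $b(\mathbf{v},q_h)=\sum_{T\in\T_h}(\nabla_w\cdot\mathbf{v},q_h)_T$. Using $n\leq m$ so that $q_h|_T\in P_n(T)\subseteq P_m(T)$ is an admissible test function in Definition \ref{weakdivergence}, I obtain $(\nabla_w\cdot\mathbf{v},q_h)_T=-(Q_0\tilde{\mathbf{v}},\nabla q_h)_T+\langle Q_b\tilde{\mathbf{v}},q_h\mathbf{n}\rangle_{\partial T}$. Comparing with the elementwise integration-by-parts identity $(\nabla\cdot\tilde{\mathbf{v}},q_h)_T=-(\tilde{\mathbf{v}},\nabla q_h)_T+\langle\tilde{\mathbf{v}}\cdot\mathbf{n},q_h\rangle_{\partial T}$, the volume terms cancel since $\nabla q_h\in[P_{n-1}(T)]^d\subseteq[P_k(T)]^d$ (this is where $n\leq k+1$ enters) and $Q_0$ is the $L^2$-projection onto $[P_k(T)]^d$. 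Summing over $T$, using single-valuedness of $\tilde{\mathbf{v}}$ and $Q_b\tilde{\mathbf{v}}$ across interior faces together with the opposite orientation of $\mathbf{n}$ on the two sides, and noting that boundary faces drop out since both quantities vanish there, I expect
$$b(\mathbf{v},q_h)=\|q_h\|^2+\sum_{e\in\mathcal{E}_I}\langle Q_b\tilde{\mathbf{v}}-\tilde{\mathbf{v}},\llbracket q_h\rrbracket\mathbf{n}\rangle_e.$$

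To finish the first inequality I would bound the face sum by Cauchy--Schwarz and Young's inequality, writing $\|(I-Q_b)\tilde{\mathbf{v}}\|_e\|\llbracket q_h\rrbracket\|_e\leq\tfrac{\epsilon}{2}h_e^{-1}\|(I-Q_b)\tilde{\mathbf{v}}\|_e^2+\tfrac{1}{2\epsilon}h_e\|\llbracket q_h\rrbracket\|_e^2$. The trace-approximation estimate $\|(I-Q_b)\tilde{\mathbf{v}}\|_e^2\leq Ch_e\|\tilde{\mathbf{v}}\|_{1,T}^2$ turns the first group into $C\epsilon\|\tilde{\mathbf{v}}\|_1^2\leq C\epsilon\|q_h\|^2$, and choosing $\epsilon$ small leaves exactly the absorbed $\tfrac12\|q_h\|^2$; the second group is recast as $\sum_e h_e^{\beta+1}\big(h_e^{-\beta}\|\llbracket q_h\rrbracket\|_e^2\big)\leq Ch^{\beta+1}s_2(q_h,q_h)$ by factoring out the stabilizer weight $h_e^{-\beta}$. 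For the energy bound I again take $\mathbf{v}=Q_h\tilde{\mathbf{v}}$ and estimate the two pieces of $\interleave\mathbf{v}\interleave^2$ separately: $\|\nabla_w\mathbf{v}\|\leq C\|\tilde{\mathbf{v}}\|_1$ follows from $H^1$-stability of $Q_0$ for $\nabla Q_0\tilde{\mathbf{v}}$ and from testing the definition of $\delta_w\mathbf{v}$ against $\phi=\delta_w\mathbf{v}$, combined with an inverse trace inequality and the face projection error $\|(I-Q_0)\tilde{\mathbf{v}}\|_{\partial T}^2\leq Ch_T\|\tilde{\mathbf{v}}\|_{1,T}^2$; the stabilizer gives $s_1(\mathbf{v},\mathbf{v})=\sum_{T}h_T^{-\gamma}\|Q_b(I-Q_0)\tilde{\mathbf{v}}\|_{\partial T}^2\leq Ch^{1-\gamma}\|\tilde{\mathbf{v}}\|_1^2$. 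Adding these and using $\|\tilde{\mathbf{v}}\|_1\leq C\|q_h\|$ yields $\interleave\mathbf{v}\interleave^2\leq C(1+h^{1-\gamma})\|q_h\|^2$, hence the stated bound via $\sqrt{1+h^{1-\gamma}}\leq 1+h^{(1-\gamma)/2}$.

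I expect the main obstacle to be the clean derivation of the exact identity: it hinges on both halves of the hypothesis $n\leq\min\{m,k+1\}$ (one for admissibility of $q_h$ as a test function in the weak divergence, the other for vanishing of the volume residual after projection), and on the interior-face bookkeeping that produces $\llbracket q_h\rrbracket$ with the correct sign. The remaining delicacy is the scaling in Young's inequality, so that the face remainder matches the prescribed $Ch^{\beta+1}s_2(q_h,q_h)$ while the absorbed term is precisely $\tfrac12\|q_h\|^2$.
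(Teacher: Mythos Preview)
Your proposal is correct and follows essentially the same route as the paper: choose $\tilde{\mathbf{v}}\in[H_0^1(\Omega)]^d$ with $\nabla\cdot\tilde{\mathbf{v}}=q_h$, set $\mathbf{v}=Q_h\tilde{\mathbf{v}}$, derive the exact identity $b(\mathbf{v},q_h)=\|q_h\|^2-\sum_{e\in\mathcal{E}_I}\langle(I-Q_b)\tilde{\mathbf{v}}\cdot\mathbf{n}_e,\llbracket q_h\rrbracket\rangle_e$ using both halves of the hypothesis $n\le\min\{m,k+1\}$, apply Young's inequality to absorb half of $\|q_h\|^2$ and recast the face remainder as $Ch^{\beta+1}s_2(q_h,q_h)$, and then bound $\interleave Q_h\tilde{\mathbf{v}}\interleave$ by estimating $\nabla Q_0\tilde{\mathbf{v}}$, $\delta_wQ_h\tilde{\mathbf{v}}$, and $s_1(Q_h\tilde{\mathbf{v}},Q_h\tilde{\mathbf{v}})$ separately via trace and projection-error inequalities. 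Your identification of where each inequality $n\le m$ and $n\le k+1$ is used, and your handling of the interior-face bookkeeping, match the paper's argument line for line.
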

\begin{proof}
For any given $q_h\in W_h$, there exists $\mathbf{w}\in [H_0^1(\Omega)]^d$ such that $\nabla\cdot\mathbf{w}=q_h$ and $\Vert \nabla \mathbf{w} \Vert \leq C\Vert q_h\Vert$. By choosing $\mathbf{v}=Q_h\mathbf{w}$ and using the definition of the weak divergence  \eqref{weak-div}, the property of $L^2$ projections, the Cauchy-Schwarz inequality, and  the trace inequality, we have from $n\leq  \min \{m, k+1\}$,
\begin{equation*}
\begin{aligned}
b(Q_h\mathbf{w}, q_h)&=\sum_{T\in\mathcal{T}_h}\Big((-Q_0\mathbf{w}, \nabla q_h)_T+\langle Q_b \mathbf{w}\cdot \mathbf{n}, q_h \rangle_{\partial T}\Big)\\
&=\sum_{T\in\mathcal{T}_h}\Big((-\mathbf{w}, \nabla q_h)_T+\langle Q_b \mathbf{w}\cdot \mathbf{n}, q_h \rangle_{\partial T}\Big)\\
&=\sum_{T\in\mathcal{T}_h}\Big((\nabla\cdot\mathbf{w}, q_h)_T-\langle (I-Q_b) \mathbf{w}\cdot \mathbf{n}, q_h \rangle_{\partial T}\Big)\\
&=\sum_{T\in\mathcal{T}_h}\Vert q_h\Vert^2_T-\sum_{e\in\mathcal{E}_{I}}\langle (I-Q_b) \mathbf{w}\cdot \mathbf{n}_e, \llbracket q_h\rrbracket  \rangle_{e}\\
&\geq \Vert q_h\Vert^2-\sum_{e\in\mathcal{E}_{I}}(\epsilon_1h_e^{-1}\Vert(I-Q_b)\mathbf{w}\Vert_e^2+\frac{1}{4\epsilon_1}h_e\Vert\llbracket q_h\rrbracket \Vert ^2_{e})\\
&\geq \Vert q_h\Vert^2-\epsilon_1C\Vert\nabla\mathbf{w}\Vert^2-\sum_{e\in\mathcal{E}_{I}}\frac{1}{4\epsilon_1}h_e\Vert\llbracket q_h\rrbracket \Vert ^2_{e},
\end{aligned}
\end{equation*}
where the constant $\epsilon_1> 0$ and $\mathbf{n}_e$ is the unit fixed direction normal to the edge $e\in\mathcal{E}_{I}$.
By taking $\epsilon_1=\frac{1}{2C}$, we arrive at the following inequality
\begin{equation}\label{inf_sup1}
\begin{aligned}
b(Q_h\mathbf{w}, q_h)&\geq \frac{1}{2}\Vert q_h\Vert^2-C\sum_{e\in\mathcal{E}_{I}}h_e\Vert\llbracket q_h\rrbracket \Vert ^2_{e},
\end{aligned}
\end{equation}
which asserts the first inequality in \eqref{inf_supconditon}.

To verify the second inequality in \eqref{inf_supconditon}, we have from the definition of the weak gradient {\color{blue}\eqref{weakgradient}} that
$$\Vert \nabla_w Q_h\mathbf{w} \Vert=\Vert \nabla Q_0\mathbf{w} +\delta_w Q_h\mathbf{w}\Vert\leq\Vert \nabla Q_0\mathbf{w} \Vert+\Vert\delta_w Q_h\mathbf{w}\Vert.$$
It is not hard to see that $\Vert \nabla Q_0\mathbf{w} \Vert\leq C \Vert \nabla \mathbf{w} \Vert$.  To analyze the second term in the above inequality, we use Definition {\color{blue}\ref{weak-grad}} and the trace inequality to obtain
\begin{equation*}
\begin{aligned}
\Vert\delta_w Q_h\mathbf{w}\Vert_T^2&=\langle Q_b\mathbf{w}-Q_bQ_0\mathbf{w}, \delta_wQ_h\mathbf{w}\cdot\mathbf{n}\rangle_{\partial T}\\
&\leq\Vert Q_b(I-Q_0)\mathbf{w}\Vert_{\partial T}\Vert  \delta_w Q_h\mathbf{w}\Vert_{\partial T}\\
&\leq C \Vert \nabla \mathbf{w}\Vert _T\Vert\delta_w Q_h\mathbf{w}\Vert_T,
\end{aligned}
\end{equation*}
which implies $\Vert\delta_w Q_h\mathbf{w}\Vert\leq C\Vert \nabla \mathbf{w}\Vert$, and further leads to $\Vert \nabla_w Q_h\mathbf{w} \Vert\leq C\Vert \nabla \mathbf{w}\Vert$.

Next, by using the trace inequality and the property of $L^2$ projections we obtain
\begin{equation*}
\begin{aligned}
s_1(Q_h\mathbf{w}, Q_h\mathbf{w})&=\sum_{T\in\mathcal{T}_h}h_T^{-\gamma}\langle Q_b(I-Q_0)\mathbf{w}, (I-Q_0)\mathbf{w}\rangle_{\partial T}\\
&\leq \sum_{T\in\mathcal{T}_h}h_T^{-\gamma}\Vert Q_b(I-Q_0)\mathbf{w} \Vert_{\partial T} \Vert (I-Q_0)\mathbf{w} \Vert_{\partial T}\\
&\leq C\sum_{T\in\mathcal{T}_h}h_T^{-\frac{\gamma}{2}}\Vert Q_b(I-Q_0)\mathbf{w} \Vert_{\partial T}h_T^{\frac{1-\gamma}{2}} \Vert\nabla\mathbf{w}\Vert_T.
\end{aligned}
\end{equation*}
Thus, we have $s_1(Q_h\mathbf{w}, Q_h\mathbf{w})^{\frac{1}{2}}\leq Ch^{\frac{1-\gamma}{2}}\Vert\nabla\mathbf{w}\Vert$.  Finally, combining the above estimates with
$$
\interleave Q_h\mathbf{w}\interleave^2=(A \nabla_w Q_h\mathbf{w}, \nabla_w Q_h\mathbf{w})+s_1(Q_h\mathbf{w}, Q_h\mathbf{w}),
$$
gives $\interleave Q_h\mathbf{w}\interleave\leq C(1+h^{\frac{1-\gamma}{2}})\Vert\nabla\mathbf{w}\Vert\leq C(1+h^{\frac{1-\gamma}{2}})\Vert q_h\Vert$. This completes the proof of the lemma.
\end{proof}
\medskip

The following is our main result on the solvability of the generalized weak Galerkin finite element method.

\begin{theorem}\label{th:nique}
The generalized weak Galerkin method \eqref{sch:WG_FEM} has one and only one solution.
\end{theorem}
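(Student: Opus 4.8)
The plan is to exploit the fact that \eqref{sch:WG_FEM} is a square linear system on the finite-dimensional product space $\mathbf{V}_h^0\times W_h$, so that existence of a solution is equivalent to uniqueness. It therefore suffices to show that the only solution of the homogeneous problem, obtained by setting $\mathbf{f}=0$, is the trivial one $(\mathbf{u}_h,p_h)=(\mathbf{0},0)$.

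First I would test the first homogeneous equation with $\mathbf{v}=\mathbf{u}_h$ and the second with $q_h=p_h$. Adding the two resulting identities, the coupling terms $-(p_h,\nabla_w\cdot\mathbf{u}_h)$ and $(\nabla_w\cdot\mathbf{u}_h,p_h)$ cancel, leaving
$$
(A\nabla_w\mathbf{u}_h,\nabla_w\mathbf{u}_h)+s_1(\mathbf{u}_h,\mathbf{u}_h)+s_2(p_h,p_h)=\interleave\mathbf{u}_h\interleave^2+s_2(p_h,p_h)=0.
$$
Since $A$ is uniformly positive definite, $s_1$ is a sum of nonnegative boundary terms, and $\mu\ge 0$ makes $s_2(p_h,p_h)\ge 0$, each summand is nonnegative. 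Hence $\interleave\mathbf{u}_h\interleave=0$ and $s_2(p_h,p_h)=0$, and Lemma \ref{lem:norm} then gives $\mathbf{u}_h\equiv\mathbf{0}$ at once.

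It remains to conclude $p_h=0$, and this is where the energy identity alone is insufficient: in the degenerate case $\mu=0$ the vanishing of $s_2(p_h,p_h)$ carries no information about $p_h$, so I would invoke the inf-sup estimate. With $\mathbf{u}_h=\mathbf{0}$, the first homogeneous equation reduces to $b(\mathbf{v},p_h)=(\nabla_w\cdot\mathbf{v},p_h)=0$ for every $\mathbf{v}\in\mathbf{V}_h^0$. Applying Lemma \ref{lem:inf-sup} to $q_h=p_h$ produces a $\mathbf{v}\in\mathbf{V}_h^0$ with $b(\mathbf{v},p_h)\ge\tfrac12\Vert p_h\Vert^2-Ch^{\beta+1}s_2(p_h,p_h)$; the left-hand side is zero by the above and the stabilizer term is zero because $s_2(p_h,p_h)=0$, forcing $\Vert p_h\Vert^2\le 0$ and thus $p_h=0$.

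The main obstacle is precisely the pressure uniqueness: the skew-symmetric coupling lets the energy identity control $\mathbf{u}_h$ (together with $s_2(p_h,p_h)$) but leaves $p_h$ undetermined whenever $\mu=0$, so the whole argument hinges on the inf-sup estimate of Lemma \ref{lem:inf-sup} being available, which in turn presupposes the compatibility condition $n\le\min\{m,k+1\}$ on the polynomial degrees. Once that lemma is in hand, the proof is a short, essentially routine combination of the energy identity with the inf-sup bound.
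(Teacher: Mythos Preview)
Your proposal is correct and follows essentially the same route as the paper: reduce to uniqueness for the homogeneous system, test with $(\mathbf{u}_h,p_h)$ to obtain the energy identity $\interleave\mathbf{u}_h\interleave^2+s_2(p_h,p_h)=0$, invoke Lemma~\ref{lem:norm} for $\mathbf{u}_h=0$, and then use the inf--sup Lemma~\ref{lem:inf-sup} together with $s_2(p_h,p_h)=0$ to force $p_h=0$. Your explicit remark that the argument relies on the hypothesis $n\le\min\{m,k+1\}$ of Lemma~\ref{lem:inf-sup} is a useful clarification that the paper leaves implicit in the theorem statement.
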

\begin{proof}
As the number of equations is the same as the number of unknowns in \eqref{sch:WG_FEM}, it suffices to establish the solution uniqueness. To this end, assume $\mathbf{u}_h\in \mathbf{V}_{h}$ and $p_h\in W_{h}$ satisfies \eqref{sch:WG_FEM} with homogeneous data (i.e., $\mathbf{f}=0$ and $\mathbf{g}=0$). By choosing $\mathbf{v} = \mathbf{u}_h$ and $q_h = p_h$ in \eqref{sch:WG_FEM}, we arrive at
$$
(A \nabla_w \mathbf{u}_h, \nabla_w \mathbf{u}_h)+s_1(\mathbf{u}_h, \mathbf{u}_h)+s_2(p_h, p_h)=0.
$$
It follows from Lemma \ref{lem:norm} that $\mathbf{u}_h\equiv 0$. Furthermore, from Lemma \ref{lem:inf-sup} and the fact that $s_2(p_h, p_h)=0$ we arrive at $\|p_h\|=0$ so that $p_h\equiv 0$. This completes the proof of the solution uniqueness.
\end{proof}

\section{Error Equations}\label{sec:4}

Denote by $\bm{Q}_s: [L^2(T)]^{d\times d} \rightarrow [P_s(T)]^{d\times d}$ the $L^2$ projection operator.

 \begin{lemma}\label{lem:identity}
For $\mathbf{v}\in \mathbf{V}_h$ and $\mathbf{w}\in [H^1(T)]^d$ on each element $T\in\mathcal{T}_h$, the following identities hold true
 \begin{equation*}
\begin{aligned}
(\nabla_w\mathbf{v}, \phi)_T&=-(\mathbf{v}_0, \nabla\cdot\phi)_T+\langle \mathbf{v}_b, \phi\cdot\mathbf{n}\rangle_{\partial T},\\
(\nabla_wQ_h\mathbf{w}, \phi)_T&=(\nabla\mathbf{w}, \phi)_T+((I-Q_0)\mathbf{w}, \nabla\cdot\phi)_T,
 \end{aligned}
\end{equation*}
for any $\phi \in [P_s(T)]^{d\times d}$ with $s=\min\{j, l\}$.
 \end{lemma}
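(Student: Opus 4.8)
The plan is to prove both identities directly from the defining relations of the generalized weak gradient in Definitions~\ref{weak-grad} and \ref{weakdivergence}, combined with elementary integration by parts and the defining property of the $L^2$ projections $Q_0$, $Q_b$. The only subtlety is a polynomial-degree bookkeeping argument that explains the hypothesis $s=\min\{j,l\}$, and this is where I expect the main (though mild) obstacle to lie.

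For the first identity, I would start from the decomposition $\nabla_w\mathbf{v}=\nabla\mathbf{v}_0+\delta_w\mathbf{v}$ on $T$. Testing against $\phi\in[P_s(T)]^{d\times d}$, I would integrate the smooth piece by parts to write $(\nabla\mathbf{v}_0,\phi)_T=-(\mathbf{v}_0,\nabla\cdot\phi)_T+\langle\mathbf{v}_0,\phi\cdot\mathbf{n}\rangle_{\partial T}$, and then invoke the definition~\eqref{weakgradient-2} of $\delta_w\mathbf{v}$ to get $(\delta_w\mathbf{v},\phi)_T=\langle\mathbf{v}_b-Q_b\mathbf{v}_0,\phi\cdot\mathbf{n}\rangle_{\partial T}$. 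Here is where the degree condition enters: \eqref{weakgradient-2} holds only for test tensors in $[P_l(T)]^{d\times d}$, so I need $s\le l$; and to replace $\langle Q_b\mathbf{v}_0,\phi\cdot\mathbf{n}\rangle_{\partial T}$ by $\langle\mathbf{v}_0,\phi\cdot\mathbf{n}\rangle_{\partial T}$ using the defining property of $Q_b$ (the projection onto $[P_j(\partial T)]^d$), I need $\phi\cdot\mathbf{n}$ restricted to each face to lie in $[P_j(\partial T)]^d$, i.e. $s\le j$. Together these force $s=\min\{j,l\}$. Adding the two contributions, the boundary term $\langle\mathbf{v}_0,\phi\cdot\mathbf{n}\rangle_{\partial T}$ cancels against the $-\langle Q_b\mathbf{v}_0,\phi\cdot\mathbf{n}\rangle_{\partial T}=-\langle\mathbf{v}_0,\phi\cdot\mathbf{n}\rangle_{\partial T}$ piece, leaving exactly $-(\mathbf{v}_0,\nabla\cdot\phi)_T+\langle\mathbf{v}_b,\phi\cdot\mathbf{n}\rangle_{\partial T}$.

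For the second identity I would specialize the first to $\mathbf{v}=Q_h\mathbf{w}=\{Q_0\mathbf{w},Q_b\mathbf{w}\}$, giving $(\nabla_wQ_h\mathbf{w},\phi)_T=-(Q_0\mathbf{w},\nabla\cdot\phi)_T+\langle Q_b\mathbf{w},\phi\cdot\mathbf{n}\rangle_{\partial T}$. I would then integrate $\mathbf{w}$ by parts in the reverse direction, $(\nabla\mathbf{w},\phi)_T=-(\mathbf{w},\nabla\cdot\phi)_T+\langle\mathbf{w},\phi\cdot\mathbf{n}\rangle_{\partial T}$, and subtract. The boundary terms match because $\langle Q_b\mathbf{w},\phi\cdot\mathbf{n}\rangle_{\partial T}=\langle\mathbf{w},\phi\cdot\mathbf{n}\rangle_{\partial T}$ by the definition of $Q_b$ (again using $\phi\cdot\mathbf{n}\in[P_j(\partial T)]^d$ on each face), and the remaining interior terms combine into $(\nabla\mathbf{w},\phi)_T+((I-Q_0)\mathbf{w},\nabla\cdot\phi)_T$, which is the claimed expression.

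The main obstacle is therefore not analytic but consists in verifying that the degree constraints are exactly what is needed to apply \eqref{weakgradient-2} and the orthogonality of $Q_b$ simultaneously; once $s=\min\{j,l\}$ is in force, every step is an identity with no inequalities or error terms. I would take care to note that $\phi\cdot\mathbf{n}$ restricted to a face $e\subset\partial T$ is a polynomial of degree $\le s$ on $e$, so that the condition $s\le j$ guarantees $\phi\cdot\mathbf{n}\in[P_j(e)]^d$ and hence the $Q_b$-orthogonality applies face by face.
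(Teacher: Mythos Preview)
Your proposal is correct and follows essentially the same approach as the paper: both proofs combine the defining relation~\eqref{weakgradient-2} with integration by parts and the orthogonality of $Q_b$, using exactly the degree constraints you identified. Your derivation of the second identity by specializing the first to $\mathbf{v}=Q_h\mathbf{w}$ is slightly more economical than the paper's, which restarts from Definition~\ref{weak-grad}, but the underlying computation is identical.
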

 \begin{proof}
  For $\mathbf{v}\in \mathbf{V}_h$, from the generalized weak gradient {\color{blue}\eqref{weakgradient}} we have
 \begin{equation*}
\begin{aligned}
(\nabla_w\mathbf{v}, \phi)_T&=(\nabla \mathbf{v}_0, \phi)_T+\langle \mathbf{v}_b-Q_b\mathbf{v}_0, \phi\cdot\mathbf{n}\rangle_{\partial T}\\
&=(\nabla \mathbf{v}_0, \phi)_T-\langle \mathbf{v}_0, \phi\cdot\mathbf{n}\rangle_{\partial T}+\langle \mathbf{v}_b, \phi\cdot\mathbf{n}\rangle_{\partial T}\\
&=-(\mathbf{v}_0, \nabla\cdot\phi)_T+\langle \mathbf{v}_b, \phi\cdot\mathbf{n}\rangle_{\partial T}, ~\forall \phi \in [P_s(T)]^{d\times d}.
 \end{aligned}
\end{equation*}
Next, using Definition {\color{blue}\ref{weak-grad}} again, we get the following equation from the property of $L^2$ projection
 \begin{equation*}
\begin{aligned}
(\nabla_wQ_h\mathbf{w}, \phi)_T&=(\nabla Q_0\mathbf{w}, \phi)_T+\langle Q_b\mathbf{w}-Q_bQ_0\mathbf{w}, \phi\cdot\mathbf{n}\rangle_{\partial T}\\
&=-(Q_0\mathbf{w}, \nabla\cdot\phi)_T+\langle Q_0\mathbf{w}, \phi\cdot\mathbf{n}\rangle_{\partial T}+\langle (I-Q_0)\mathbf{w}, \phi\cdot\mathbf{n}\rangle_{\partial T}\\
&=-(Q_0\mathbf{w}, \nabla\cdot\phi)_T+\langle \mathbf{w}, \phi\cdot\mathbf{n}\rangle_{\partial T}\\
&=-(Q_0\mathbf{w}, \nabla\cdot\phi)_T+(\nabla\mathbf{w}, \phi)_{T}+(\mathbf{w}, \nabla\cdot\phi)_{T}\\
&=(\nabla\mathbf{w}, \phi)_T+((I-Q_0)\mathbf{w}, \nabla\cdot\phi)_T,
 \end{aligned}
\end{equation*}
for all $\phi \in [P_s(T)]^{d\times d}$.
 \end{proof}

Let $(\mathbf{u}, p)$ be the solution of the Stokes problem \eqref{stokeproblem} and $(\mathbf{u}_h, p_h)$ be its numerical approximation arising from the generalized weak Galerkin scheme $\eqref{sch:WG_FEM}$.
Denote by $e_h^{\mathbf{u}}=Q_h\mathbf{u}-\mathbf{u}_h$ and $e_h^p=Q_h^p p-p_h$ the error functions.
Introduce the following linear functionals:
\begin{equation*}
\begin{aligned}
l_1(\mathbf{u}, \mathbf{v})&=\sum_{T\in\mathcal{T}_h}\langle (I-\bm{Q}_s)A\nabla\mathbf{u}\cdot\mathbf{n}, \mathbf{v}_0-\mathbf{v}_b\rangle_{\partial T},\\
l_2(p, \mathbf{v})&=\sum_{T\in\mathcal{T}_h}\langle (I-Q_h^p)p, (\mathbf{v}_b-\mathbf{v}_0)\cdot\mathbf{n}\rangle_{\partial T},\\
l_3(\mathbf{u}, \mathbf{v})&=((\bm{Q}_s-I)A\nabla\mathbf{u}, \nabla\mathbf{v}_0)+((I-Q_0)\mathbf{u}, \nabla\cdot \bm{Q}_sA\nabla_w\mathbf{v}),\\
l_4(\mathbf{u}, \mathbf{v})&=(A\nabla_wQ_h\mathbf{u}, (I-\bm{Q}_s)\nabla_w\mathbf{v}),\\
l_5(\mathbf{u}, q_h)&=\sum_{T\in\mathcal{T}_h}\langle (Q_b-I)\mathbf{u}\cdot\mathbf{n}, q_h\rangle_{\partial T}.
\end{aligned}
\end{equation*}
Observe that when $j\ge n$, we have $l_5(\mathbf{u}, q_h)=0$.

\begin{lemma}\label{lem:errorequ}
Let $(\mathbf{u}, p)$ be the exact solution of the Stokes problem \eqref{stokeproblem} and $(\mathbf{u}_h, p_h)$ the numerical solution
of the generalized weak Galerkin method $\eqref{sch:WG_FEM}$.
 Assume that $k-1\leq n\leq\min\{m,k+1\}$.
Then the following error equations hold true
\begin{equation*}
\begin{aligned}
(A\nabla_w e_h^{\mathbf{u}}, \nabla_w \mathbf{v})-(e^p_h, \nabla_w \cdot \mathbf{v})+s_1(e_h^{\mathbf{u}}, \mathbf{v})
=&l_1(\mathbf{u}, \mathbf{v})+l_2(p, \mathbf{v})+s_1(Q_h\mathbf{u}, \mathbf{v})\\
&+l_3(\mathbf{u}, \mathbf{v})+l_4(\mathbf{u}, \mathbf{v}),\\
(\nabla_w \cdot e_h^{\mathbf{u}}, q_h )+s_2(e_h^p, q_h)=& l_5(\mathbf{u}, q_h)+s_2(Q_h^p p, q_h),
\end{aligned}
\end{equation*}
for all $\mathbf{v}\in \mathbf{V}_h^0$ and $q_h\in W_h$.
\end{lemma}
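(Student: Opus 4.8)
The plan is to obtain both identities by subtracting the scheme \eqref{sch:WG_FEM} from the relations it would satisfy if the discrete solution were replaced by the projected exact solution $(Q_h\mathbf{u},Q_h^pp)$. Since $e_h^{\mathbf{u}}=Q_h\mathbf{u}-\mathbf{u}_h$ and $e_h^p=Q_h^pp-p_h$, bilinearity together with the two equations in \eqref{sch:WG_FEM} shows that the left-hand side of the first error equation equals $(A\nabla_wQ_h\mathbf{u},\nabla_w\mathbf{v})-(Q_h^pp,\nabla_w\cdot\mathbf{v})+s_1(Q_h\mathbf{u},\mathbf{v})-(\mathbf{f},\mathbf{v}_0)$ and that of the second equals $(\nabla_w\cdot Q_h\mathbf{u},q_h)+s_2(Q_h^pp,q_h)$. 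After cancelling the common stabilizer terms, it therefore suffices to prove the two consistency identities
\begin{equation*}
(A\nabla_wQ_h\mathbf{u},\nabla_w\mathbf{v})-(Q_h^pp,\nabla_w\cdot\mathbf{v})-(\mathbf{f},\mathbf{v}_0)=l_1+l_2+l_3+l_4,\qquad (\nabla_w\cdot Q_h\mathbf{u},q_h)=l_5.
\end{equation*}

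I would dispatch the divergence identity first. Applying the weak divergence \eqref{weak-div} with the admissible choice $\psi=q_h$ (valid because $n\le m$), replacing $Q_0\mathbf{u}$ by $\mathbf{u}$ against $\nabla q_h\in[P_{n-1}]^d\subseteq[P_k]^d$ (this is exactly where $n\le k+1$ is used), and integrating by parts elementwise, gives $(\nabla_w\cdot Q_h\mathbf{u},q_h)=\sum_{T}(\nabla\cdot\mathbf{u},q_h)_T+\sum_{T}\langle(Q_b-I)\mathbf{u}\cdot\mathbf{n},q_h\rangle_{\partial T}$. The first sum vanishes by $\nabla\cdot\mathbf{u}=0$, and the second is precisely $l_5(\mathbf{u},q_h)$; adding $s_2(Q_h^pp,q_h)$ to both sides recovers the stated second equation.

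For the momentum identity I would test $-\nabla\cdot(A\nabla\mathbf{u})+\nabla p=\mathbf{f}$ against $\mathbf{v}_0$ and integrate by parts on each $T$, obtaining $(\mathbf{f},\mathbf{v}_0)=\sum_T[(A\nabla\mathbf{u},\nabla\mathbf{v}_0)_T-(p,\nabla\cdot\mathbf{v}_0)_T]-\sum_T\langle A\nabla\mathbf{u}\cdot\mathbf{n},\mathbf{v}_0-\mathbf{v}_b\rangle_{\partial T}+\sum_T\langle p,(\mathbf{v}_0-\mathbf{v}_b)\cdot\mathbf{n}\rangle_{\partial T}$, where the single-valued $\mathbf{v}_b$ terms are inserted for free since $A\nabla\mathbf{u}\cdot\mathbf{n}$ and $p$ are continuous across interior faces and $\mathbf{v}_b=\mathbf{0}$ on $\partial\Omega$. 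The pressure contributions are handled exactly as in the divergence identity: rewriting $(Q_h^pp,\nabla_w\cdot\mathbf{v})$ by \eqref{weak-div} and invoking $k-1\le n\le m$ yields $(Q_h^pp,\nabla_w\cdot\mathbf{v})=\sum_T(p,\nabla\cdot\mathbf{v}_0)_T+\sum_T\langle(\mathbf{v}_b-\mathbf{v}_0)\cdot\mathbf{n},Q_h^pp\rangle_{\partial T}$, and combining this with the pressure face term above collapses the pressure part to $l_2(p,\mathbf{v})$. For the viscous part, the engine is the second identity in Lemma \ref{lem:identity}, which is valid only against degree-$s$ matrices with $s=\min\{j,l\}$; I would therefore split $A\nabla_w\mathbf{v}=\bm{Q}_s(A\nabla_w\mathbf{v})+(I-\bm{Q}_s)(A\nabla_w\mathbf{v})$, apply Lemma \ref{lem:identity} with $\phi=\bm{Q}_s(A\nabla_w\mathbf{v})$ to turn $(\nabla_wQ_h\mathbf{u},\bm{Q}_s(A\nabla_w\mathbf{v}))$ into $(\nabla\mathbf{u},\bm{Q}_s(A\nabla_w\mathbf{v}))+((I-Q_0)\mathbf{u},\nabla\cdot\bm{Q}_s(A\nabla_w\mathbf{v}))$, and identify the $(I-\bm{Q}_s)$ remainder with $l_4$. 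The second term here is the second half of $l_3$; splitting the viscous face integral as $A\nabla\mathbf{u}=\bm{Q}_sA\nabla\mathbf{u}+(I-\bm{Q}_s)A\nabla\mathbf{u}$ produces $l_1$; and reconciling $(\nabla\mathbf{u},\bm{Q}_s(A\nabla_w\mathbf{v}))$ with $\sum_T(A\nabla\mathbf{u},\nabla\mathbf{v}_0)_T$ leaves the first half of $l_3$. Restoring $s_1(Q_h\mathbf{u},\mathbf{v})$ then gives the first equation.

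The hard part will be the viscous term: the $L^2$ projection $\bm{Q}_s$ must be inserted in exactly the right slot so that Lemma \ref{lem:identity} is applicable (its identities fail for the non-polynomial matrix $A\nabla_w\mathbf{v}$), and one must then verify that the several residual pieces coalesce \emph{precisely} into $l_1$, $l_3$ and $l_4$ as written. The delicate point is the ordering of $A$ and $\bm{Q}_s$, together with the self-adjointness and idempotency of the projections and the symmetry of $A$, which is what lets the projection remainder be recast in the form $l_4=(A\nabla_wQ_h\mathbf{u},(I-\bm{Q}_s)\nabla_w\mathbf{v})$. Throughout, the index hypotheses $k-1\le n\le\min\{m,k+1\}$ are indispensable and must be used at the precise places indicated above: $k-1\le n$ to pass from $(p,\nabla\cdot\mathbf{v}_0)$ to $(Q_h^pp,\nabla\cdot\mathbf{v}_0)$, $n\le m$ to test the weak divergence against $q_h$ and $Q_h^pp$, and $n\le k+1$ to replace $Q_0\mathbf{u}$ by $\mathbf{u}$ against $\nabla q_h$.
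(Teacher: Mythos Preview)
Your proposal is correct and follows essentially the same strategy as the paper: both insert $\bm{Q}_s$ to make Lemma~\ref{lem:identity} applicable, use its second identity on $(\nabla_wQ_h\mathbf{u},\bm{Q}_sA\nabla_w\mathbf{v})$ to generate $l_4$ and the second half of $l_3$, and treat the pressure and divergence terms via the weak-divergence definition under exactly the index constraints you identify. The only cosmetic difference is that the paper expands $(\bm{Q}_s\nabla\mathbf{u},A\nabla_w\mathbf{v})$ via the \emph{first} identity of Lemma~\ref{lem:identity} and then integrates back to $-(\nabla\cdot(A\nabla\mathbf{u}),\mathbf{v}_0)$, whereas you start from the strong form tested against $\mathbf{v}_0$ and work forward---these are the same computation in reverse order, and in your ``reconciling'' step you will find that the leftover face piece $\sum_T\langle\bm{Q}_sA\nabla\mathbf{u}\cdot\mathbf{n},\mathbf{v}_b-\mathbf{v}_0\rangle_{\partial T}$ is precisely what cancels the $\bm{Q}_s$-half of your boundary split of $A\nabla\mathbf{u}$.
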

\begin{proof}
For $\mathbf{v}\in \mathbf{V}_h^0$, from Lemma \ref{lem:identity} we have
\begin{equation}\label{eq:spid1}
\begin{aligned}
(A\nabla_wQ_h\mathbf{u}, \nabla_w \mathbf{v})=&(\nabla_wQ_h\mathbf{u}, \bm{Q}_sA\nabla_w \mathbf{v})+(\nabla_wQ_h\mathbf{u}, (I-\bm{Q}_s)A\nabla_w \mathbf{v})\\
=&(\nabla \mathbf{u}, \bm{Q}_sA\nabla_w \mathbf{v})+((I-Q_0)\mathbf{u}, \nabla \cdot \bm{Q}_sA\nabla_w \mathbf{v})\\
&+(\nabla_wQ_h\mathbf{u}, (I-\bm{Q}_s)A\nabla_w \mathbf{v}).
\end{aligned}
\end{equation}
By using Lemma \ref{lem:identity}, we may rewrite the first term of the right-hand side of \eqref{eq:spid1} as follows on the element $T$	
\begin{equation*}
\begin{aligned}
&(\nabla \mathbf{u}, \bm{Q}_sA\nabla_w \mathbf{v})_T=(\bm{Q}_s\nabla \mathbf{u}, A\nabla_w \mathbf{v})_T\\
=&-(\nabla\cdot A \bm{Q}_s\nabla\mathbf{u}, \mathbf{v}_0)_T+ \langle A\bm{Q}_s\nabla\mathbf{u}\cdot\mathbf{n}, \mathbf{v}_b\rangle_{\partial T}\\
=& (\bm{Q}_s\nabla\mathbf{u}, A\nabla\mathbf{v}_0)_T+ \langle A\bm{Q}_s\nabla\mathbf{u}\cdot\mathbf{n}, \mathbf{v}_b-\mathbf{v}_0\rangle_{\partial T}\\
=& (A\nabla\mathbf{u}, \nabla\mathbf{v}_0)_T+((\bm{Q}_s-I)\nabla\mathbf{u}, A\nabla\mathbf{v}_0)_T+ \langle A\bm{Q}_s\nabla\mathbf{u}\cdot\mathbf{n}, \mathbf{v}_b-\mathbf{v}_0\rangle_{\partial T}\\
=& -(\nabla\cdot(A\nabla\mathbf{u}), \mathbf{v}_0)_T+\langle A\nabla\mathbf{u}\cdot\mathbf{n}, \mathbf{v}_0\rangle_{\partial T}
+ \langle A\bm{Q}_s\nabla\mathbf{u}\cdot\mathbf{n}, \mathbf{v}_b-\mathbf{v}_0\rangle_{\partial T}\\
&+((\bm{Q}_s-I)\nabla\mathbf{u}, A\nabla\mathbf{v}_0)_T.
\end{aligned}
\end{equation*}
Thus, using $\sum_{T\in\mathcal{T}_h}\langle A\nabla\mathbf{u}\cdot\mathbf{n}, \mathbf{v}_b\rangle_{\partial T}=0$ we arrive at
\begin{equation*}
\begin{aligned}
(\nabla \mathbf{u}, \bm{Q}_s A\nabla_w \mathbf{v})=&
-(\nabla\cdot(A\nabla\mathbf{u}), \mathbf{v}_0)- \langle (I-\bm{Q}_s)A\nabla\mathbf{u}\cdot\mathbf{n}, \mathbf{v}_b-\mathbf{v}_0\rangle\\
&+((\bm{Q}_s-I)A\nabla\mathbf{u}, \nabla\mathbf{v}_0).
\end{aligned}
\end{equation*}
Substituting the above identity into \eqref{eq:spid1} leads to
\begin{equation}\label{eq:spid2}
\begin{aligned}
(A\nabla_wQ_h\mathbf{u}, \nabla_w \mathbf{v})
=&-(\nabla\cdot(A\nabla\mathbf{u}), \mathbf{v}_0)- \langle (I-\bm{Q}_s)A\nabla\mathbf{u}\cdot\mathbf{n}, \mathbf{v}_b-\mathbf{v}_0\rangle\\
&+((\bm{Q}_s-I)A\nabla\mathbf{u}, \nabla\mathbf{v}_0)+((I-Q_0)\mathbf{u}, \nabla \cdot \bm{Q}_sA\nabla_w \mathbf{v})\\
&+(\nabla_wQ_h\mathbf{u}, (I-\bm{Q}_s)A\nabla_w \mathbf{v}).
\end{aligned}
\end{equation}
From the condition $k-1\leq n\leq m$ and the definition of weak divergence \eqref{weak-div} we have
\begin{equation}\label{eq:sec_ter}
\begin{aligned}
(Q_h^p p, \nabla_w \cdot\mathbf{v})&=( p, \nabla \cdot\mathbf{v}_0)+\sum_{T\in\mathcal{T}_h} \langle Q_h^p p, (\mathbf{v}_b- \mathbf{v}_0)\cdot \mathbf{n}\rangle_{\partial T}\\
&= -(\nabla p, \mathbf{v}_0)+\sum_{T\in\mathcal{T}_h}\langle p, \mathbf{v}_0\cdot\mathbf{n} \rangle_{\partial T}+\langle Q_h^p p, (\mathbf{v}_b- \mathbf{v}_0)\cdot \mathbf{n}\rangle_{\partial T}\\
&= -(\nabla p, \mathbf{v}_0)-\sum_{T\in\mathcal{T}_h} \langle (I-Q_h^p) p, (\mathbf{v}_b- \mathbf{v}_0)\cdot \mathbf{n}\rangle_{\partial T},
\end{aligned}
\end{equation}
where we have used $\sum_{T\in\mathcal{T}_h}\langle p, \mathbf{v}_b\cdot\mathbf{n}\rangle_{\partial T}=\sum_{e\in\mathcal{E}_I}\langle \llbracket p \rrbracket , \mathbf{v}_b\cdot\mathbf{n}_e\rangle_{e}=0$.

Next, for $q_h\in W_h$, by using the definition of weak divergence \eqref{weak-div} and $n\leq \min\{m, k+1\}$, we obtain
\begin{equation}\label{eq:thir_ter}
\begin{aligned}
(\nabla_w \cdot Q_h\mathbf{u}, q_h)=(\nabla\cdot\mathbf{u}, q_h)+\langle(Q_b-I)\mathbf{u}\cdot\mathbf{n},q_h\rangle
\end{aligned}
\end{equation}
Hence, combining \eqref{eq:spid2}, \eqref{eq:sec_ter}, \eqref{eq:thir_ter} with \eqref{stokeproblem} yields
\begin{equation}\label{eq:proj_problem}
\begin{aligned}
&(A\nabla_w Q_h {\mathbf{u}}, \nabla_w \mathbf{v})-(Q_h^p p, \nabla_w \cdot \mathbf{v})+s_1(Q_h {\mathbf{u}}, \mathbf{v})\\
=&(\mathbf{f}, \mathbf{v}_0)+s_1(Q_h\mathbf{u}, \mathbf{v})
+((\bm{Q}_s-I)A\nabla\mathbf{u}, \nabla\mathbf{v}_0)\\
& +((I-Q_0)\mathbf{u}, \nabla\cdot A\bm{Q}_s\nabla_w\mathbf{v})+(A\nabla_wQ_h\mathbf{u}, (I-\bm{Q}_s)\nabla_w\mathbf{v})\\
&+\langle (I-Q_h^p)p, (\mathbf{v}_b-\mathbf{v}_0)\cdot\mathbf{n}\rangle+\langle A(I-\bm{Q}_s)\nabla\mathbf{u}\cdot\mathbf{n}, \mathbf{v}_0-\mathbf{v}_b\rangle,\\
&(\nabla_w \cdot Q_h {\mathbf{u}}, q_h )+s_2(Q_h^p p, q_h)\\
= & \langle (Q_b-I)\mathbf{u}\cdot\mathbf{n}, q_h\rangle+s_2(Q_h^p p, q_h).
\end{aligned}
\end{equation}
Finally, subtracting \eqref{sch:WG_FEM} from \eqref{eq:proj_problem} gives rise to the desired error equations.
\end{proof}
\medskip

The following is a useful estimate between the usual gradient and the generalized weak gradient.

\begin{lemma}\label{lem:norm_relative}
There exists a constant $C$ such that  $\Vert\nabla\mathbf{v}_0\Vert \leq C(1+h^{\frac{\gamma-1}{2}})\interleave \mathbf{v}\interleave$ for all $\mathbf{v}\in \mathbf{V}_h$.
\end{lemma}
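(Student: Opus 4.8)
The plan is to read off the defining relation of the weak gradient, $\nabla_w\mathbf{v}|_T=\nabla\mathbf{v}_0|_T+\delta_w\mathbf{v}|_T$ from \eqref{weakgradient}, and rearrange it as $\nabla\mathbf{v}_0=\nabla_w\mathbf{v}-\delta_w\mathbf{v}$. A triangle inequality then reduces the claim to two separate estimates, $\Vert\nabla_w\mathbf{v}\Vert\le C\interleave\mathbf{v}\interleave$ and $\Vert\delta_w\mathbf{v}\Vert\le Ch^{\frac{\gamma-1}{2}}\interleave\mathbf{v}\interleave$, whose sum produces exactly the factor $(1+h^{\frac{\gamma-1}{2}})$ in the statement.

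The first estimate is immediate from the uniform positive-definiteness of $A$. Since $\lambda_1\xi^t\xi\le\xi^tA\xi$, we have $\lambda_1\Vert\nabla_w\mathbf{v}\Vert^2\le(A\nabla_w\mathbf{v},\nabla_w\mathbf{v})\le\interleave\mathbf{v}\interleave^2$, so $\Vert\nabla_w\mathbf{v}\Vert\le\lambda_1^{-1/2}\interleave\mathbf{v}\interleave$. This accounts for the constant term inside the parentheses.

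The second estimate is the crux. On a fixed element $T$ I would choose the test matrix $\phi=\delta_w\mathbf{v}|_T$ in the defining identity \eqref{weakgradient-2}, giving $\Vert\delta_w\mathbf{v}\Vert_T^2=\langle\mathbf{v}_b-Q_b\mathbf{v}_0,\delta_w\mathbf{v}\cdot\mathbf{n}\rangle_{\partial T}$. Applying the Cauchy-Schwarz inequality on $\partial T$ followed by the trace inequality for polynomials (equivalently, a local inverse estimate) $\Vert\delta_w\mathbf{v}\Vert_{\partial T}\le Ch_T^{-1/2}\Vert\delta_w\mathbf{v}\Vert_T$, valid on shape-regular elements, yields $\Vert\delta_w\mathbf{v}\Vert_T\le Ch_T^{-1/2}\Vert\mathbf{v}_b-Q_b\mathbf{v}_0\Vert_{\partial T}$. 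Inserting the weight $h_T^{-\gamma}$ that appears in $s_1$, this rewrites as $\Vert\delta_w\mathbf{v}\Vert_T^2\le Ch_T^{\gamma-1}\big(h_T^{-\gamma}\Vert\mathbf{v}_b-Q_b\mathbf{v}_0\Vert_{\partial T}^2\big)$, so the right-hand side is $Ch_T^{\gamma-1}$ times the local contribution to the stabilizer.

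Summing over $T\in\mathcal{T}_h$ and extracting the worst power of the mesh size then leaves $\Vert\delta_w\mathbf{v}\Vert^2\le Ch^{\gamma-1}s_1(\mathbf{v},\mathbf{v})\le Ch^{\gamma-1}\interleave\mathbf{v}\interleave^2$, which is the desired bound after taking square roots. The step I expect to be the main obstacle is precisely this last scaling: passing from the element-wise factors $h_T^{\gamma-1}$ to a single global factor $h^{\gamma-1}$. When $\gamma\ge1$ this is immediate from $h_T\le h$, but for $\gamma<1$ the exponent is negative and the naive bound points the wrong way, so I would rely on the mesh-regularity relation $h_T^{\gamma-1}\le Ch^{\gamma-1}$ furnished by the shape-regular conditions of \cite{Wang2014a}. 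Combining the two estimates through the triangle inequality then gives $\Vert\nabla\mathbf{v}_0\Vert\le C(1+h^{\frac{\gamma-1}{2}})\interleave\mathbf{v}\interleave$, as claimed.
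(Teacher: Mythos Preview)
Your proposal is correct and follows essentially the same route as the paper: write $\nabla\mathbf{v}_0=\nabla_w\mathbf{v}-\delta_w\mathbf{v}$, bound $\Vert\nabla_w\mathbf{v}\Vert$ by the energy norm via the ellipticity of $A$, and bound $\Vert\delta_w\mathbf{v}\Vert$ by testing \eqref{weakgradient-2} with $\phi=\delta_w\mathbf{v}$, applying Cauchy--Schwarz and the polynomial trace inequality, and then recognizing the local stabilizer contribution. Your discussion of the passage from $h_T^{\gamma-1}$ to $h^{\gamma-1}$ when $\gamma<1$ is in fact more careful than the paper, which performs this step without comment; just note that the required lower bound $h_T\ge ch$ is a quasi-uniformity condition rather than a consequence of shape regularity alone.
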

\begin{proof}
From the definition of the generalized weak gradient \ref{weakgradient} we have $\nabla\mathbf{v}_0=\nabla_w\mathbf{\mathbf{v}}_h-\delta_w\mathbf{v}$.  Thus,
\begin{equation*}
\begin{aligned}
\Vert \nabla\mathbf{v}_0\Vert\leq \Vert\nabla_w\mathbf{v}\Vert+\Vert \delta_w\mathbf{v}\Vert.
\end{aligned}
\end{equation*}
Now, applying the trace inequality to Definition {\color{blue}\ref{weak-grad}} on the element $T$ yields
\begin{equation*}
\begin{aligned}
\Vert \delta_w\mathbf{v}\Vert_T^2&=\langle \mathbf{v}_b-Q_b\mathbf{v}_0, \delta_w\mathbf{v}\rangle_{\partial T}
\leq \Vert \mathbf{v}_b-Q_b\mathbf{v}_0\Vert_{\partial T}\Vert\delta_w\mathbf{v}\Vert_{\partial T}\\
&\leq C h_T^{-\frac{\gamma}{2}}\Vert \mathbf{v}_b-Q_b\mathbf{v}_0\Vert_{\partial T} h_T^{\frac{\gamma-1}{2}}\Vert\delta_w\mathbf{v}\Vert_{ T},
\end{aligned}
\end{equation*}
which leads to $\Vert \delta_w\mathbf{v}\Vert\leq C  h^{\frac{\gamma-1}{2}} s_1(\mathbf{v}, \mathbf{v})^{\frac{1}{2}}\leq Ch^{\frac{\gamma-1}{2}}\interleave \mathbf{v}\interleave$.
\end{proof}

\medskip
The following are concerned with estimates for the linear functionals $l_i$.
\begin{lemma}\label{le:err_ter_est}
Assume that $\mathbf{u}\in [H^{\alpha}(\Omega)]^d$ where $\alpha=\max\{s+2,j, k+1\}$ and $p\in H^{n+1}(\Omega)$. There hold the following estimates:
\begin{equation*}
\begin{aligned}
\left|l_1(\mathbf{u}, \mathbf{v})\right|&\leq Ch^{s+1}(1+h^{\frac{\gamma-1}{2}})\Vert \mathbf{u}\Vert_{s+2}\interleave \mathbf{v}\interleave,\\
\left|l_2(p, \mathbf{v})\right|&\leq Ch^{n+1}(1+h^{\frac{\gamma-1}{2}})\Vert p\Vert_{n+1}\interleave \mathbf{v}\interleave,\\
\left|l_3(\mathbf{u}, \mathbf{v})\right|&\leq C\Big(h^{s+1}(1+h^{\frac{\gamma-1}{2}})\Vert \mathbf{u}\Vert_{s+2}+h^k\Vert \mathbf{u}\Vert_{k+1}\Big)\interleave \mathbf{v}\interleave,\\
\left|l_4(\mathbf{u}, \mathbf{v})\right|&\leq C\Big(h^{s+1}\Vert \mathbf{u}\Vert_{s+2}+h^k\Vert \mathbf{u}\Vert_{k+1}\Big)\interleave \mathbf{v}\interleave,\\
\left|l_5(\mathbf{u}, q_h)\right|&\leq Ch^{j}h^{\frac{\beta+1}{2}}\Vert \mathbf{u}\Vert_{j+1}s_2(q_h, q_h)^{\frac{1}{2}},\\
\left|s_1(Q_h\mathbf{u}, \mathbf{v})\right|&\leq Ch^kh^{\frac{1-\gamma}{2}}\Vert \mathbf{u}\Vert_{k+1}\interleave \mathbf{v}\interleave,\\
\left|s_2(Q_h^p p, q_h)\right|&\leq Ch^{n+1}\Vert p\Vert_{n+1} h^{-\frac{\beta+1}{2}}s_2(q_h, q_h)^{\frac{1}{2}},
\end{aligned}
\end{equation*}
for all $\mathbf{v}\in \mathbf{V}_h^0$ and $q_h\in W_h$.
\end{lemma}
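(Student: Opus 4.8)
The plan is to bound each functional by the Cauchy--Schwarz inequality followed by the trace inequality and the approximation properties of the $L^2$ projections, letting the energy norm enter through two channels: directly, via $\|\nabla_w\mathbf{v}\|\le\lambda_1^{-1/2}\interleave\mathbf{v}\interleave$ and $s_1(\mathbf{v},\mathbf{v})^{1/2}\le\interleave\mathbf{v}\interleave$; and indirectly, via Lemma~\ref{lem:norm_relative}, which is precisely what manufactures the factor $(1+h^{\frac{\gamma-1}{2}})$. Two workhorse estimates are used throughout: for smooth $\psi$ the trace/approximation bound $\|(I-\bm{Q}_s)\psi\|_{\partial T}\le Ch_T^{s+1/2}\|\psi\|_{s+1,T}$ (with the analogues for $Q_0$, $Q_b$, $Q_h^p$), and the inverse trace inequality $\|\phi\|_{\partial T}\le Ch_T^{-1/2}\|\phi\|_T$ for polynomials $\phi$.

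For $l_1$ and $l_2$ I would first split the boundary discrepancy as $\mathbf{v}_0-\mathbf{v}_b=(I-Q_b)\mathbf{v}_0-(\mathbf{v}_b-Q_b\mathbf{v}_0)$. The second piece is exactly what $s_1$ controls, contributing after a weighted Cauchy--Schwarz a term of order $h^{s+1}h^{\frac{\gamma-1}{2}}$ (resp.\ $h^{n+1}h^{\frac{\gamma-1}{2}}$). For the first piece the key observation is that $Q_b$ reproduces constants (since $j\ge0$), so $(I-Q_b)\mathbf{v}_0=(I-Q_b)(\mathbf{v}_0-\bar{\mathbf{v}}_0)$ for the elementwise mean $\bar{\mathbf{v}}_0$; the trace inequality together with Poincar\'e then gives $\|(I-Q_b)\mathbf{v}_0\|_{\partial T}\le Ch_T^{1/2}\|\nabla\mathbf{v}_0\|_T$, and Lemma~\ref{lem:norm_relative} converts $\|\nabla\mathbf{v}_0\|$ into the $(1+h^{\frac{\gamma-1}{2}})$ factor multiplying the $h^{s+1}$ (resp.\ $h^{n+1}$) order. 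Pairing these with $\|(I-\bm{Q}_s)A\nabla\mathbf{u}\|_{\partial T}\le Ch_T^{s+1/2}\|\mathbf{u}\|_{s+2,T}$ and $\|(I-Q_h^p)p\|_{\partial T}\le Ch_T^{n+1/2}\|p\|_{n+1,T}$ yields the two stated bounds. The functional $l_3$ is handled term by term: its first term is bounded by $\|(\bm{Q}_s-I)A\nabla\mathbf{u}\|\,\|\nabla\mathbf{v}_0\|$, again invoking Lemma~\ref{lem:norm_relative}; its second term by $\|(I-Q_0)\mathbf{u}\|\,\|\nabla\cdot\bm{Q}_sA\nabla_w\mathbf{v}\|$, where the inverse inequality gives $\|\nabla\cdot\bm{Q}_sA\nabla_w\mathbf{v}\|_T\le Ch_T^{-1}\|\nabla_w\mathbf{v}\|_T$, producing the $h^k\|\mathbf{u}\|_{k+1}$ contribution.

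For $l_4$ I would exploit the self-adjointness and idempotency of $\bm{Q}_s$ to rewrite $l_4(\mathbf{u},\mathbf{v})=((I-\bm{Q}_s)A\nabla_wQ_h\mathbf{u},(I-\bm{Q}_s)\nabla_w\mathbf{v})$, so that the second factor is dominated by $\|\nabla_w\mathbf{v}\|\le C\interleave\mathbf{v}\interleave$ with no $\nabla\mathbf{v}_0$ involved; this is why $l_4$ carries no $(1+h^{\frac{\gamma-1}{2}})$ factor. The first factor is split through $A\nabla\mathbf{u}$: one part gives $\|(I-\bm{Q}_s)A\nabla\mathbf{u}\|\le Ch^{s+1}\|\mathbf{u}\|_{s+2}$, and the other requires the auxiliary estimate $\|\nabla_wQ_h\mathbf{u}-\nabla\mathbf{u}\|\le Ch^k\|\mathbf{u}\|_{k+1}$. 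I would prove the latter from $\nabla_wQ_h\mathbf{u}-\nabla\mathbf{u}=\nabla(Q_0-I)\mathbf{u}+\delta_wQ_h\mathbf{u}$, using $\|\nabla(Q_0-I)\mathbf{u}\|\le Ch^k\|\mathbf{u}\|_{k+1}$ together with $\|\delta_wQ_h\mathbf{u}\|_T\le Ch_T^k\|\mathbf{u}\|_{k+1,T}$, the last coming from testing \eqref{weakgradient-2} against $\phi=\delta_wQ_h\mathbf{u}$ and applying the inverse trace inequality.

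Finally, $l_5$ and $s_2(Q_h^p p,q_h)$ are edge-jump terms. Using the single-valuedness of $Q_b\mathbf{u}$ across interior faces (and $\mathbf{u}=0$ on $\partial\Omega$), I would rewrite $l_5(\mathbf{u},q_h)=\sum_{e\in\mathcal{E}_I}\langle(Q_b-I)\mathbf{u}\cdot\mathbf{n}_e,\llbracket q_h\rrbracket\rangle_e$; similarly $\llbracket Q_h^pp\rrbracket=\llbracket(Q_h^p-I)p\rrbracket$ since $p$ is continuous. A Cauchy--Schwarz with the weight $h_e^{-\beta}$ then pairs the approximation errors $\|(I-Q_b)\mathbf{u}\|_e\le Ch_e^{j+1/2}\|\mathbf{u}\|_{j+1}$ and $\|\llbracket(Q_h^p-I)p\rrbracket\|_e\le Ch_e^{n+1/2}\|p\|_{n+1}$ with $s_2(q_h,q_h)^{1/2}$, yielding the factors $h^{\frac{\beta+1}{2}}$ and $h^{-\frac{\beta+1}{2}}$ respectively. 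The term $s_1(Q_h\mathbf{u},\mathbf{v})$ is the most direct: writing $\mathbf{u}_b-Q_b\mathbf{u}_0=Q_b(I-Q_0)\mathbf{u}$ and applying a weighted Cauchy--Schwarz against $s_1(\mathbf{v},\mathbf{v})^{1/2}$ gives the stated $h^kh^{\frac{1-\gamma}{2}}$ order. I expect the main obstacle to be the $l_1$/$l_2$ analysis, specifically the bookkeeping that separates the bound into its $h^{s+1}$ and $h^{s+1+\frac{\gamma-1}{2}}$ pieces and the trace argument $\|(I-Q_b)\mathbf{v}_0\|_{\partial T}\le Ch_T^{1/2}\|\nabla\mathbf{v}_0\|_T$ that is needed to bring in the energy norm with the correct power of $h$; the auxiliary estimate $\|\nabla_wQ_h\mathbf{u}-\nabla\mathbf{u}\|\le Ch^k\|\mathbf{u}\|_{k+1}$ for $l_4$ is the secondary technical point.
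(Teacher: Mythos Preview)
Your proposal is correct and follows essentially the same route as the paper: the same splitting $\mathbf{v}_0-\mathbf{v}_b=(I-Q_b)\mathbf{v}_0+(Q_b\mathbf{v}_0-\mathbf{v}_b)$ with the estimate $\|(I-Q_b)\mathbf{v}_0\|_{\partial T}\le Ch_T^{1/2}\|\nabla\mathbf{v}_0\|_T$ and Lemma~\ref{lem:norm_relative} for $l_1,l_2$; the same inverse-inequality treatment of $l_3$; the same edge-jump rewrites for $l_5$ and $s_2$; and the same estimate $\|\delta_wQ_h\mathbf{u}\|_T\le Ch_T^{k}\|\mathbf{u}\|_{k+1,T}$ (the paper records this as \eqref{eq:delt_est}). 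The only cosmetic difference is in $l_4$: you insert $\pm A\nabla\mathbf{u}$ and use $\|\nabla_wQ_h\mathbf{u}-\nabla\mathbf{u}\|\le Ch^k\|\mathbf{u}\|_{k+1}$, whereas the paper splits $\nabla_wQ_h\mathbf{u}=\nabla Q_0\mathbf{u}+\delta_wQ_h\mathbf{u}$ directly and bounds $\|(I-\bm Q_s)\nabla Q_0\mathbf{u}\|$; the two arguments are equivalent reorderings of the same ingredients.
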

\begin{proof}
From the triangle inequality, Lemma \ref{lem:norm_relative}, and the trace inequality we obtain
\begin{equation*}
\begin{aligned}
\left|l_1(\mathbf{u}, \mathbf{v})\right|\leq & C\sum_{T\in\mathcal{T}_h}\Vert (I-\bm{Q}_s)A\nabla\mathbf{u}\Vert_{\partial T} \Vert \mathbf{v}_0- \mathbf{v}_b\Vert_{\partial T}\\
\leq  & C(1+h^{\frac{\gamma-1}{2}})\Vert (I-\bm{Q}_s)A\nabla\mathbf{u}\Vert \interleave \mathbf{v}\interleave\\
\leq & Ch^{s+1}\Vert\mathbf{u}\Vert_{s+2} (1+h^{\frac{\gamma-1}{2}})\interleave \mathbf{v}\interleave,\\
\left|l_2(p, \mathbf{v})\right|\leq & \sum_{T\in\mathcal{T}_h}\Vert (I-Q_h^p) p\Vert_{\partial T} \Vert \mathbf{v}_0- \mathbf{v}_b\Vert_{\partial T}\\
\leq & Ch^{n+1}\Vert p\Vert_{n+1}(1+h^{\frac{\gamma-1}{2}})\interleave \mathbf{v}\interleave,
\end{aligned}
\end{equation*}
where we have used the following estimate
\begin{equation*}
\begin{aligned}
\Vert \mathbf{v}_0- \mathbf{v}_b\Vert_{\partial T}\leq & \Vert \mathbf{v}_0- Q_b\mathbf{v}_0\Vert_{\partial T}+\Vert Q_b\mathbf{v}_0- \mathbf{v}_b\Vert_{\partial T}\\
\leq & Ch^{\frac{1}{2}}\Vert\nabla\mathbf{v}_0\Vert_T+h^{\frac{\gamma}{2}}s_1(\mathbf{v}, \mathbf{v})^{\frac{1}{2}}\\
\leq & Ch^{\frac{1}{2}}(1+h^{\frac{\gamma-1}{2}})\interleave \mathbf{v}\interleave.
\end{aligned}
\end{equation*}
Now, using the property of projection and the inverse inequality,  we arrive at
\begin{equation*}
\begin{aligned}
\left|l_3(\mathbf{u}, \mathbf{v})\right|&\leq C\Big(\Vert(\bm{Q}_s-I)A\nabla\mathbf{u}\Vert \Vert\nabla\mathbf{v}_0\Vert
+\Vert(I-Q_0)\mathbf{u}\Vert \Vert \nabla\cdot \bm{Q}_sA\nabla_w\mathbf{v}\Vert\Big)\\
&\leq C\Big(h^{s+1}(1+h^{\frac{\gamma-1}{2}})\Vert \mathbf{u}\Vert_{s+2}+h^k\Vert \mathbf{u}\Vert_{k+1}\Big)\interleave \mathbf{v}\interleave.
\end{aligned}
\end{equation*}
As to $l_4$, it follows from the definition of weak gradient {\color{blue}\eqref{weakgradient}} that
\begin{equation*}
\begin{aligned}
\left|l_4(\mathbf{u}, \mathbf{v})\right|&=|(\nabla Q_0\mathbf{u}+\delta_w Q_h\mathbf{u}, (I-\bm{Q}_s)A\nabla_w\mathbf{v})|\\
&\leq |((I-\bm{Q}_s)\nabla Q_0\mathbf{u}, A\nabla_w\mathbf{v})|
+C\Vert \delta_w Q_h\mathbf{u} \Vert \Vert(I-\bm{Q}_s)A\nabla_w\mathbf{v} \Vert  \\
&\leq C\Vert (I-\bm{Q}_s)\nabla Q_0\mathbf{u} \Vert \Vert\nabla_w\mathbf{v}\Vert+ \Vert \delta_w Q_h\mathbf{u} \Vert \Vert(I-\bm{Q}_s)A\nabla_w\mathbf{v}\Vert\\
&\leq Ch^{s+1}\Vert Q_0\mathbf{u} \Vert_{s+2} \Vert\nabla_w\mathbf{v}\Vert+ C h^{k}\Vert \mathbf{u}\Vert_{k+1}\Vert\nabla_w\mathbf{v}\Vert\\
&\leq C(h^{s+1}\Vert \mathbf{u}\Vert_{s+2}+h^k\Vert \mathbf{u}\Vert_{k+1})\Vert\nabla_w\mathbf{v}\Vert.
\end{aligned}
\end{equation*}
Here, we have used the following estimate
\begin{equation}\label{eq:delt_est}
\begin{aligned}
\Vert \delta_w Q_h\mathbf{u}\Vert_T^2&=\langle Q_b(I-Q_0)\mathbf{u},  \delta_w Q_h\mathbf{u}\rangle_{\partial T}\\
&\leq \Vert (I-Q_0)\mathbf{u}\Vert_{\partial T}\Vert\delta_w Q_h\mathbf{u}\Vert_{\partial T}\\
&\leq Ch^{k}\Vert \mathbf{u}\Vert_{k+1} \Vert \delta_w Q_h\mathbf{u}\Vert_T.
\end{aligned}
\end{equation}
Next, using the trace inequality and the property of $L^2$ projections, we deduce that
\begin{equation*}
\begin{aligned}
|s_1(Q_h\mathbf{u}, \mathbf{v})|&\leq \sum_{T\in\mathcal{T}_h}h_T^{-\gamma}|\langle(I-Q_0)\mathbf{u}, \mathbf{v}_b-Q_b\mathbf{v}_0\rangle_{\partial T}|\\
&\leq \sum_{T\in\mathcal{T}_h}h_T^{\frac{-\gamma}{2}} \Vert (I-Q_0)\mathbf{u} \Vert_{\partial T} h_T^{\frac{-\gamma}{2}} \Vert \mathbf{v}_b-Q_b\mathbf{v}_0 \Vert_{\partial T}\\
&\leq Ch^k\Vert \mathbf{u}\Vert_{k+1} h^{\frac{1-\gamma}{2}} s_1(\mathbf{v}, \mathbf{v})^{\frac{1}{2}}.
\end{aligned}
\end{equation*}
Owing to the property of the $L^2$ projections, one finds
\begin{equation*}
\begin{aligned}
|s_2(Q_h^p p, q_h)|=&\left|\sum_{e\in\mathcal{E}_h}h_e^{-\beta}\langle \llbracket Q^p_h p \rrbracket,   \llbracket q_h \rrbracket \rangle_e\right|
=\left|\sum_{e\in\mathcal{E}_h}h_e^{-\beta}\langle \llbracket (I - Q^p_h) p \rrbracket,   \llbracket q_h \rrbracket \rangle_e\right|\\
\leq&\sum_{e\in\mathcal{E}_h}h_e^{-\frac{\beta}{2}}\Vert \llbracket (I - Q^p_h) p\rrbracket \Vert_e  h_e^{-\frac{\beta}{2}} \Vert  \llbracket q_h \rrbracket \Vert_e\\
\leq&Ch^{n+1}\Vert p\Vert_{n+1} h^{-\frac{\beta+1}{2}}s_2(q_h, q_h)^{\frac{1}{2}},
\end{aligned}
\end{equation*}
where we have used the identity $\sum_{e\in\mathcal{E}_h}h^{-\beta}\langle \llbracket p \rrbracket,   \llbracket q_h \rrbracket \rangle_e=0$.

Finally,  it follows from the trace inequality that
\begin{equation*}
\begin{aligned}
|l_5(\mathbf{u}, q_h)|&=\left|\sum_{T\in\mathcal{T}_h}\langle (Q_b-I)\mathbf{u}\cdot\mathbf{n}, q_h\rangle_{\partial T}\right|
=\left|\sum_{e\in\mathcal{E}_I}\langle (Q_b-I)\mathbf{u}\cdot\mathbf{n}_e, \llbracket q_h\rrbracket \rangle_{e}\right|\\
&\leq\sum_{e\in\mathcal{E}_I}h_e^{\frac{1}{2}}\Vert (I-Q_b)\mathbf{u}\Vert_e h_e^{\frac{\beta-1}{2}} h_e^{\frac{-\beta}{2}}\Vert\llbracket q_h\rrbracket \Vert_{e}\\
&\leq Ch^{j}\Vert \mathbf{u}\Vert_{j+1}h^{\frac{\beta+1}{2}}s_2(q_h, q_h)^{\frac{1}{2}}.
\end{aligned}
\end{equation*}
This completes the proof of the lemma.
\end{proof}

\section{Error Analysis}\label{sec:5}
In this section, we derive an error estimate for the velocity and pressure approximations arising from the scheme \eqref{sch:WG_FEM} with elements of arbitrary polynomial combinations.
In particular, for the case of $n\leq j$, we show that the stabilizer $s_2$ is no longer necessary for the scheme to be well-posed.

\subsection{Error estimate with $\mu > 0$}\label{error_case1} For simplicity of presentation, we assume $\mu=1$.
\begin{theorem}\label{err_esti_energy}
Let $(\mathbf{u}, p)$ be the exact solution of the Stokes problem \eqref{stokeproblem} and $(\mathbf{u}_h, p_h)$ the numerical solution arising from the generalized weak Galerkin scheme \eqref{sch:WG_FEM}.
Assume $\mathbf{u}\in [H^{\alpha}(\Omega)]^d$ with $\alpha=\max\{s+2, j+1, k+1\}$, $p\in H^{n+1}(\Omega)$, and $k-1\leq n\leq\min\{m, k+1\}$. Then, there exists a constant $C$ such that
\begin{equation*}
\begin{aligned}
\interleave Q_h\mathbf{u}-\mathbf{u}_h \interleave \leq& C\Big(h^{s+1}(1+h^{\frac{\gamma-1}{2}})\Vert \mathbf{u}\Vert_{s+2}
+h^k(1+h^{\frac{1-\gamma}{2}})\Vert \mathbf{u}\Vert_{k+1}\\
&+h^{j}h^{\frac{\beta+1}{2}}\Vert \mathbf{u}\Vert_{j+1}+h^{n+1}(1+h^{\frac{\gamma-1}{2}}+h^{-\frac{\beta+1}{2}})\Vert p\Vert_{n+1}{\color{blue}\Big)},\\
\Vert Q_h^p p-p_h \Vert \leq& C(1+h^{\frac{1-\gamma}{2}}+h^{\frac{\beta+1}{2}})\Big(h^{s+1}(1+h^{\frac{\gamma-1}{2}})\Vert \mathbf{u}\Vert_{s+2}
+h^k(1+h^{\frac{1-\gamma}{2}})\Vert \mathbf{u}\Vert_{k+1}\\
&+h^{j}h^{\frac{\beta+1}{2}}\Vert \mathbf{u}\Vert_{j+1}+h^{n+1}(1+h^{\frac{\gamma-1}{2}}+h^{-\frac{\beta+1}{2}})\Vert p\Vert_{n+1}\Big).
\end{aligned}
\end{equation*}
Moreover, the optimal order of convergence can be achieved when $\gamma=1$ and $\beta=-1$.
\end{theorem}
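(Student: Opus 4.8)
The plan is to work directly from the two error equations of Lemma \ref{lem:errorequ} and to combine a straightforward energy argument for the velocity with the perturbed \emph{inf-sup} condition of Lemma \ref{lem:inf-sup} for the pressure. Throughout I write $\Lambda$ for the full data-dependent quantity appearing on the right-hand side of the first asserted estimate, so that the two conclusions read $\interleave e_h^{\mathbf{u}}\interleave\le C\Lambda$ and $\|e_h^p\|\le C(1+h^{(1-\gamma)/2}+h^{(\beta+1)/2})\Lambda$.

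First I would establish the velocity estimate. Taking $\mathbf{v}=e_h^{\mathbf{u}}$ in the first error equation and $q_h=e_h^p$ in the second, the crucial observation is that the mixed terms $-(e_h^p,\nabla_w\cdot e_h^{\mathbf{u}})$ and $(\nabla_w\cdot e_h^{\mathbf{u}},e_h^p)$ cancel upon adding the two equations. What remains on the left is exactly $\interleave e_h^{\mathbf{u}}\interleave^2+s_2(e_h^p,e_h^p)$, while the right-hand side is $l_1+l_2+s_1(Q_h\mathbf{u},\cdot)+l_3+l_4$ tested against $e_h^{\mathbf{u}}$, plus $l_5+s_2(Q_h^p p,\cdot)$ tested against $e_h^p$. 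Inserting the seven bounds of Lemma \ref{le:err_ter_est}, the first five terms each carry a factor $\interleave e_h^{\mathbf{u}}\interleave$ with coefficient bounded by $C\Lambda$, while $l_5$ and $s_2(Q_h^p p,\cdot)$ each carry a factor $s_2(e_h^p,e_h^p)^{1/2}$ with coefficient bounded by $C\Lambda$. Young's inequality then absorbs $\tfrac12\interleave e_h^{\mathbf{u}}\interleave^2+\tfrac12 s_2(e_h^p,e_h^p)$ into the left, giving $\interleave e_h^{\mathbf{u}}\interleave^2+s_2(e_h^p,e_h^p)\le C\Lambda^2$. This simultaneously delivers the first inequality of the theorem and the auxiliary bound $s_2(e_h^p,e_h^p)^{1/2}\le C\Lambda$, which is needed for the pressure.

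Second, for the pressure I would invoke Lemma \ref{lem:inf-sup}: for $e_h^p\in W_h$ choose $\mathbf{v}\in\mathbf{V}_h^0$ with $b(\mathbf{v},e_h^p)\ge\tfrac12\|e_h^p\|^2-Ch^{\beta+1}s_2(e_h^p,e_h^p)$ and $\interleave\mathbf{v}\interleave\le C(1+h^{(1-\gamma)/2})\|e_h^p\|$. Using the first error equation to rewrite $b(\mathbf{v},e_h^p)=(e_h^p,\nabla_w\cdot\mathbf{v})$ as $(A\nabla_w e_h^{\mathbf{u}},\nabla_w\mathbf{v})+s_1(e_h^{\mathbf{u}},\mathbf{v})$ minus the functional terms, and bounding each piece by the Cauchy--Schwarz inequality together with $\interleave\mathbf{v}\interleave\le C(1+h^{(1-\gamma)/2})\|e_h^p\|$, I obtain $\tfrac12\|e_h^p\|^2\le C(\interleave e_h^{\mathbf{u}}\interleave+\Lambda)(1+h^{(1-\gamma)/2})\|e_h^p\|+Ch^{\beta+1}s_2(e_h^p,e_h^p)$. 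Substituting the first-step bounds $\interleave e_h^{\mathbf{u}}\interleave\le C\Lambda$ and $s_2(e_h^p,e_h^p)\le C\Lambda^2$, and applying Young's inequality once more to absorb $\|e_h^p\|$, yields $\|e_h^p\|\le C(1+h^{(1-\gamma)/2}+h^{(\beta+1)/2})\Lambda$, which is the second asserted estimate. The optimal-order claim then follows by setting $\gamma=1$ and $\beta=-1$, since every correction factor $h^{(\gamma-1)/2}$, $h^{(1-\gamma)/2}$, $h^{(\beta+1)/2}$, $h^{-(\beta+1)/2}$ collapses to $O(1)$.

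The main obstacle I anticipate is the pressure step rather than the velocity step. Because the \emph{inf-sup} inequality of Lemma \ref{lem:inf-sup} is only a perturbed version of the classical one, it carries the extra negative term $-Ch^{\beta+1}s_2(e_h^p,e_h^p)$, which does \emph{not} come with a $\|e_h^p\|$ factor and therefore cannot be absorbed into $\|e_h^p\|^2$ by Young's inequality. The resolution — and the point requiring care — is to feed back the auxiliary stabilizer bound $s_2(e_h^p,e_h^p)\le C\Lambda^2$ obtained in the velocity step, so that this term contributes precisely the $h^{(\beta+1)/2}$ correction in the final pressure estimate. Bookkeeping of the several $h$-power correction factors, and verifying they reduce to $O(1)$ at $\gamma=1,\ \beta=-1$, is the only other place where attention is needed.
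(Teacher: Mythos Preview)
Your proposal is correct and follows essentially the same route as the paper's proof: both add the two error equations with $\mathbf{v}=e_h^{\mathbf{u}}$, $q_h=e_h^p$ to obtain \eqref{eq:err1} (your velocity step and auxiliary bound on $s_2(e_h^p,e_h^p)$), and then use the perturbed \emph{inf-sup} condition together with the first error equation to derive \eqref{eq:err2}, feeding back the stabilizer bound exactly as you describe. Your identification of the key subtlety---that the extra term $-Ch^{\beta+1}s_2(e_h^p,e_h^p)$ in Lemma~\ref{lem:inf-sup} must be controlled via the auxiliary estimate from the velocity step---matches the paper's reasoning precisely.
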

\begin{proof} By taking $\mathbf{v}=e_h^{\mathbf{u}}$ and $q_h=e_h^p$ in the error equations in Lemma \ref{lem:errorequ},  we have
\begin{equation*}
\begin{aligned}
A(\nabla_w e_h^{\mathbf{u}}, \nabla_w e_h^{\mathbf{u}})-(e^p_h, \nabla_w \cdot e_h^{\mathbf{u}})+s_1(e_h^{\mathbf{u}}, e_h^{\mathbf{u}})
=&l_1(\mathbf{u}, e_h^{\mathbf{u}})+l_2(p, e_h^{\mathbf{u}})+s_1(Q_h\mathbf{u}, e_h^{\mathbf{u}})\\
&+l_3(\mathbf{u}, e_h^{\mathbf{u}})+l_4(\mathbf{u}, e_h^{\mathbf{u}}), \\
(\nabla_w \cdot e_h^{\mathbf{u}}, e_h^p )+s_2(e_h^p, e_h^p)=& l_5(\mathbf{u}, e_h^p)+s_2(Q_h^p p, e_h^p).
\end{aligned}
\end{equation*}
Summing the above two equations and using Lemma \ref{le:err_ter_est}, we arrive at
\begin{equation}\label{eq:err1}
\begin{aligned}
&\interleave e_h^{\mathbf{u}}\interleave^2 +s_2(e_h^p, e_h^p)
\leq  C\Big(h^{s+1}(1+h^{\frac{\gamma-1}{2}})\Vert \mathbf{u}\Vert_{s+2}\\
&+h^{n+1}(1+h^{\frac{\gamma-1}{2}})\Vert p\Vert_{n+1}
+h^k(1+h^{\frac{1-\gamma}{2}})\Vert \mathbf{u}\Vert_{k+1}\Big)\interleave e_h^{\mathbf{u}}\interleave\\
&+C\Big(h^{j}h^{\frac{\beta+1}{2}}\Vert \mathbf{u}\Vert_{j+1}+h^{n+1}h^{-\frac{\beta+1}{2}}\Vert p\Vert_{n+1}\Big) s_2(e_h^p, e_h^p)^{\frac{1}{2}}.
\end{aligned}
\end{equation}
Next, by using the {\em inf-sup} condition \eqref{inf_supconditon}, Lemma \ref{lem:errorequ}, and  Lemma \ref{le:err_ter_est}, we obtain
\begin{equation*}
\begin{aligned}
&\frac{1}{2}\Vert e^q_h\Vert^2-C\sum_{e\in\mathcal{E}_{I}}h_e\Vert  \llbracket e^q_h \rrbracket \Vert_e^2\leq(e^p_h, \nabla_w \cdot \mathbf{v})=A(\nabla_w e_h^{\mathbf{u}}, \nabla_w \mathbf{v})+s_1(e_h^{\mathbf{u}}, \mathbf{v})\\
&-\Big(l_1(\mathbf{u}, \mathbf{v})+l_2(p, \mathbf{v})+s_1(Q_h\mathbf{u}, \mathbf{v})
+l_3(\mathbf{u}, \mathbf{v})+l_4(\mathbf{u}, \mathbf{v})\Big)\\
\leq &C(\interleave e_h^{\mathbf{u}}\interleave +h^{s+1}(1+h^{\frac{\gamma-1}{2}})\Vert \mathbf{u}\Vert_{s+2}+h^{n+1}(1+h^{\frac{\gamma-1}{2}})\Vert p\Vert_{n+1}\\
&+h^k(1+h^{\frac{1-\gamma}{2}})\Vert \mathbf{u}\Vert_{k+1})
\interleave \mathbf{v}\interleave
\end{aligned}
\end{equation*}
for some $\mathbf{v}$ satisfying \eqref{inf_supconditon}. It follows from the Cauchy-Schwarz inequality that
\begin{equation*}
\begin{aligned}
\frac{1}{2}\Vert e^q_h\Vert^2 \leq &C(\interleave e_h^{\mathbf{u}}\interleave +h^{s+1}(1+h^{\frac{\gamma-1}{2}})\Vert \mathbf{u}\Vert_{s+2}
+h^{n+1}(1+h^{\frac{\gamma-1}{2}})\Vert p\Vert_{n+1}\\
&+h^k(1+h^{\frac{1-\gamma}{2}})\Vert \mathbf{u}\Vert_{k+1}) (1+h^{\frac{1-\gamma}{2}})
\Vert e^q_h\Vert +Ch^{\beta+1}s_2(e_h^p, e_h^p)\\
\leq&C(1+h^{\frac{1-\gamma}{2}})^2\interleave e_h^{\mathbf{u}}\interleave^2 +\frac{1}{4}\Vert e^q_h\Vert^2+Ch^{\beta+1}s_2(e_h^p, e_h^p).
\end{aligned}
\end{equation*}
Therefore, we have
\begin{equation}\label{eq:err2}
\begin{aligned}
\Vert e^q_h\Vert \leq&C(1+h^{\frac{1-\gamma}{2}})\interleave e_h^{\mathbf{u}}\interleave +Ch^{\frac{\beta+1}{2}}s_2(e_h^p, e_h^p)^{\frac{1}{2}}.
\end{aligned}
\end{equation}
Finally, combining \eqref{eq:err1} with \eqref{eq:err2} completes the proof of the theorem.
\end{proof}

\begin{remark}\label{re:errest}
For the case of $\gamma=1$ and $\beta=-1$, the error estimate in Theorem  \ref{err_esti_energy} can be rewritten as
\begin{equation*}
\begin{aligned}
\interleave Q_h\mathbf{u}-\mathbf{u}_h \interleave+\Vert Q_h^p p-p_h \Vert\leq& C(h^{s+1}\Vert \mathbf{u}\Vert_{s+2}+h^k\Vert \mathbf{u}\Vert_{k+1}+h^{j}\Vert \mathbf{u}\Vert_{j+1}\\
&+h^{n+1}\Vert p\Vert_{n+1}),
\end{aligned}
\end{equation*}
which clearly represents an optimal order of error estimate for the generalized weak Galerkin scheme \eqref{sch:WG_FEM}.
\end{remark}

\medskip
To estimate the velocity error $e_h^{\mathbf{u}}$ in the $L^2$ norm,  we shall employ the usual duality argument by considering an auxiliary problem that seeks $(\varphi, \xi)$ such that
\begin{equation}\label{pro:dual}
\begin{aligned}
-\nabla\cdot A\nabla\varphi+\nabla\xi&=e_0^{\mathbf{u}}, ~~\mbox{in} ~\Omega,\\
\nabla\cdot\varphi&=0,~~~\mbox{in}~ \Omega,\\
\varphi&=0,~~~\mbox{on}~ \partial\Omega.
\end{aligned}
\end{equation}
Assume that the problem \eqref{pro:dual} is $H^2$-regular; i.e.  for $e_0^{\mathbf{u}}\in [L^2(\Omega)]^d$, the solution $(\varphi, \xi)$ is so regular that $\varphi\in[H^2(\Omega)]^d$ and $ \xi\in H^1(\Omega)$ and satisfies
\begin{equation}\label{asu:regular}
\begin{aligned}
\Vert \varphi\Vert_2+ \Vert \xi\Vert_1\leq C \Vert e_0^{\mathbf{u}}\Vert.
\end{aligned}
\end{equation}

\begin{theorem}\label{err_esti_L2}
Let $(\mathbf{u}, p)$ be the exact solution of the Stokes problem \eqref{stokeproblem} and $(\mathbf{u}_h, p_h)$ the numerical solution arising from the generalized weak Galerkin method \eqref{sch:WG_FEM}.
Assume $\mathbf{u}\in [H^{\alpha}(\Omega)]^d$ where  $\alpha=\max\{s+2, j+1, k+1\}$, $p\in H^{n+1}(\Omega)$, $k\geq 1$, $k-1\leq n\leq\min\{m, k+1\}$, and either $j\ge n$ or $j\geq 1$. Assume that the problem \eqref{pro:dual} is $H^2$-regular. Then, there exists a constant $C$ such that
\begin{equation*}
\begin{aligned}
 \Vert Q_0\mathbf{u}-\mathbf{u}_0 \Vert \leq& C(1+h^{\frac{\gamma-1}{2}}+h^{\frac{1-\gamma}{2}}+h^{\frac{\beta+1}{2}})
 \Big(h^{s+2}(1+h^{\frac{\gamma-1}{2}})\Vert \mathbf{u}\Vert_{s+2}\\
 &+h^{j+1}(1+h^{\frac{\beta+1}{2}})\Vert \mathbf{u}\Vert_{j+1}+h^{k+1}(1+h^{\frac{1-\gamma}{2}}+h^{1-\gamma})\Vert \mathbf{u}\Vert_{k+1}\\
&+h^{n+2}(1+h^{\frac{\gamma-1}{2}}+h^{-\frac{\beta+1}{2}})\Vert p\Vert_{n+1}\Big).
\end{aligned}
\end{equation*}
Moreover, the optimal order of convergence can be achieved when $\gamma=1$ and $\beta=-1$.
\end{theorem}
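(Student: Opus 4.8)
The plan is to use the standard duality (Aubin--Nitsche) argument built on the auxiliary problem \eqref{pro:dual}. First I would test the dual Stokes problem against the interior velocity error $e_0^{\mathbf{u}}=Q_0\mathbf{u}-\mathbf{u}_0$, so that $\Vert e_0^{\mathbf{u}}\Vert^2=(e_0^{\mathbf{u}},e_0^{\mathbf{u}})$. Since $(\varphi,\xi)$ solves a Stokes problem with load $e_0^{\mathbf{u}}$ and with $\nabla\cdot\varphi=0$, I would apply the projection identity \eqref{eq:proj_problem} to $(\varphi,\xi)$ in place of $(\mathbf{u},p)$, with $\mathbf{f}$ replaced by $e_0^{\mathbf{u}}$. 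Testing the resulting identity with $\mathbf{v}=e_h^{\mathbf{u}}$ and using that $A$ is symmetric expresses $\Vert e_0^{\mathbf{u}}\Vert^2$ in terms of $(A\nabla_w e_h^{\mathbf{u}},\nabla_w Q_h\varphi)$, $(Q_h^p\xi,\nabla_w\cdot e_h^{\mathbf{u}})$ and the dual linear functionals $l_1(\varphi,e_h^{\mathbf{u}})$, $l_2(\xi,e_h^{\mathbf{u}})$, $l_3(\varphi,e_h^{\mathbf{u}})$, $l_4(\varphi,e_h^{\mathbf{u}})$; the $s_1(Q_h\varphi,\cdot)$ contributions cancel because they appear on both sides of \eqref{eq:proj_problem}.

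Next I would eliminate the two remaining bilinear-form terms with the primal error equations of Lemma \ref{lem:errorequ}, using the test pair $(\mathbf{v},q_h)=(Q_h\varphi,Q_h^p\xi)$, together with the dual divergence identity $(\nabla_w\cdot Q_h\varphi,q_h)=l_5(\varphi,q_h)$ coming from the second line of \eqref{eq:proj_problem} applied to the dual (its $s_2$ contributions cancel). This produces a master identity representing $\Vert e_0^{\mathbf{u}}\Vert^2$ as a finite sum of four types of terms: (i) the dual functionals $l_1(\varphi,e_h^{\mathbf{u}}),l_2(\xi,e_h^{\mathbf{u}}),l_3(\varphi,e_h^{\mathbf{u}}),l_4(\varphi,e_h^{\mathbf{u}})$ paired with the primal error; (ii) the primal functionals $l_1(\mathbf{u},Q_h\varphi)$, $l_2(p,Q_h\varphi)$, $l_3(\mathbf{u},Q_h\varphi)$, $l_4(\mathbf{u},Q_h\varphi)$ and $s_1(Q_h\mathbf{u},Q_h\varphi)$ paired with the dual projection; (iii) the two $l_5$ contributions $l_5(\varphi,e_h^p)$ and $l_5(\mathbf{u},Q_h^p\xi)$; and (iv) the stabilizer cross-terms $s_1(e_h^{\mathbf{u}},Q_h\varphi)$, $s_2(e_h^p,Q_h^p\xi)$ and $s_2(Q_h^p p,Q_h^p\xi)$.

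Then I would estimate each type separately, gaining in every case one extra power of $h$ over the energy-norm bound. For type (i) I would reuse the bounds of Lemma \ref{le:err_ter_est} with the data roles played by the dual solution; since $\varphi\in[H^2(\Omega)]^d$ and $\xi\in H^1(\Omega)$ only, the effective approximation order is $h^1$, so these terms are controlled by $Ch\,(\Vert\varphi\Vert_2+\Vert\xi\Vert_1)\interleave e_h^{\mathbf{u}}\interleave$, with $\interleave e_h^{\mathbf{u}}\interleave$ supplied by Theorem \ref{err_esti_energy}. For type (ii) the generic Lemma \ref{le:err_ter_est} bound only yields $\interleave Q_h\varphi\interleave=O(1)$ and is not sharp enough, so I would re-estimate each functional by hand, transferring the projection residuals onto the smooth factor $\varphi$: for example in $l_1(\mathbf{u},Q_h\varphi)$ one uses $\Vert(Q_0-Q_b)\varphi\Vert_{\partial T}\le Ch^{3/2}\Vert\varphi\Vert_2$ together with $\Vert(I-\bm{Q}_s)A\nabla\mathbf{u}\Vert_{\partial T}\le Ch^{s+1/2}\Vert\mathbf{u}\Vert_{s+2}$ to get $|l_1(\mathbf{u},Q_h\varphi)|\le Ch^{s+2}\Vert\mathbf{u}\Vert_{s+2}\Vert\varphi\Vert_2$, and similarly $l_2(p,Q_h\varphi)$ gives the $h^{n+2}\Vert p\Vert_{n+1}$ order and $l_3,l_4,s_1$ the $h^{s+2}$ and $h^{k+1}$ orders (here $k\ge1$ is needed so that $Q_0\varphi$ approximates $\varphi\in H^2$ to order $h^2$). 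The $l_5$ terms in (iii) vanish when $j\ge n$ and are otherwise (when $j\ge1$) bounded by the trace inequality together with the $s_2$-control of the pressure error, producing the $h^{j+1}$ order; the stabilizer terms in (iv) are handled exactly as in the proof of Theorem \ref{err_esti_energy}, yielding the weight $(1+h^{(1-\gamma)/2}+h^{(\beta+1)/2})$. Finally I would collect all contributions, substitute the energy-norm and $s_2(e_h^p,e_h^p)^{1/2}$ bounds of Theorem \ref{err_esti_energy} for the factors involving the primal error, apply the regularity \eqref{asu:regular} to replace $\Vert\varphi\Vert_2+\Vert\xi\Vert_1$ by $C\Vert e_0^{\mathbf{u}}\Vert$, and divide through by $\Vert e_0^{\mathbf{u}}\Vert$.

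I expect the main obstacle to be precisely the type (ii) terms: because the test function is $Q_h\varphi$ rather than a generic $\mathbf{v}$, the off-the-shelf estimates of Lemma \ref{le:err_ter_est} lose the extra order of convergence, so each such functional must be reworked to push the projection residual onto the $H^2$-regular dual solution. This re-estimation is exactly where the hypotheses $k\ge1$ and ``$j\ge n$ or $j\ge1$'' become essential, guaranteeing that the boundary residuals $(Q_0-Q_b)\varphi$ and the $l_5$ coupling are genuinely of higher order; the remaining effort is careful bookkeeping of the $\gamma$- and $\beta$-dependent weights across arbitrary polynomial degrees $s,j,k,n$.
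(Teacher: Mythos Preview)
Your proposal is correct and follows essentially the same duality argument as the paper: apply the projection identity \eqref{eq:proj_problem} to the dual pair $(\varphi,\xi)$ with test function $e_h^{\mathbf u}$, combine with the primal error equations of Lemma~\ref{lem:errorequ} (tested with $(Q_h\varphi,Q_h^p\xi)$) and the divergence identity $(\nabla_w\!\cdot Q_h\varphi,q_h)=l_5(\varphi,q_h)$ to obtain a master identity for $\|e_0^{\mathbf u}\|^2$, then re-estimate each $l_i$ by hand so as to gain one extra power of $h$ and finish via the regularity bound \eqref{asu:regular} and Theorem~\ref{err_esti_energy}. The only cosmetic differences are that the paper keeps $s_1(Q_h\varphi,\cdot)$ inside the bundle $L_{\varphi,\xi}$ rather than cancelling it up front, and bounds $l_5(\varphi,e_h^p)$ directly by $Ch\,\|e_h^p\|\,\|\varphi\|_2$ (using the $L^2$ pressure estimate of Theorem~\ref{err_esti_energy}) rather than through $s_2$-control.
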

\begin{proof}
Similarly to the first equation of \eqref{eq:proj_problem}, we have for any $\mathbf{v}\in \mathbf{V}_h^0$
\begin{equation}\label{eq:proj_dual_1}
\begin{aligned}
&(A\nabla_w Q_h \varphi, \nabla_w \mathbf{v})-(Q_h^p \xi, \nabla_w \cdot \mathbf{v})+s_1(Q_h \varphi, \mathbf{v})
=(e_0^{\mathbf{u}}, \mathbf{v}_0)+s_1(Q_h\varphi, \mathbf{v})\\
&+(A\nabla_wQ_h\varphi, (I-\bm{Q}_s)\nabla_w\mathbf{v})+(A(\bm{Q}_s-I)\nabla\varphi, \nabla\mathbf{v}_0)\\
&+((I-Q_0)\varphi, \nabla\cdot \bm{Q}_sA\nabla_w\mathbf{v})
+\sum_{T\in\mathcal{T}_h}\langle (I-Q_h^p)\xi, (\mathbf{v}_b-\mathbf{v}_0)\cdot\mathbf{n}\rangle_{\partial T}\\
&+\sum_{T\in\mathcal{T}_h}\langle A(I-\bm{Q}_s)\nabla\varphi\cdot\mathbf{n}, \mathbf{v}_0-\mathbf{v}_b\rangle_{\partial T}\\
&:=(e_0^{\mathbf{u}}, \mathbf{v}_0)+L_{\varphi, \xi}(\mathbf{v}).
\end{aligned}
\end{equation}
By taking $\mathbf{v}=e_h^{\mathbf{u}}$ in \eqref{eq:proj_dual_1}, we arrive at
\begin{equation}\label{eq:proj_dual_2}
\begin{aligned}
&\Vert e_0^{\mathbf{u}}\Vert^2 =(A\nabla_w Q_h \varphi, \nabla_w e_h^{\mathbf{u}})-(Q_h^p \xi, \nabla_w \cdot e_h^{\mathbf{u}})+s_1(Q_h \varphi, e_h^{\mathbf{u}})
-L_{\varphi, \xi}(e_h^{\mathbf{u}}).
\end{aligned}
\end{equation}

Next, choosing $q_h=Q_h^p \xi$ in the second equation  of Lemma  \ref{lem:errorequ} leads to

\begin{equation}\label{eq:proj_dual_3}
\begin{aligned}
(\nabla_w \cdot e_h^{\mathbf{u}}, Q_h^p \xi)+s_2(e_h^p,Q_h^p \xi)&= l_5(\mathbf{u}, Q_h^p \xi)+s_2(Q_h^p p, Q_h^p \xi).
\end{aligned}
\end{equation}
From the definition of weak divergence and the property of $L^2$ projections one finds
\begin{equation}\label{eq:proj_dual_4}
\begin{aligned}
(\nabla_w \cdot Q_h \varphi, e_h^p )= l_5(\varphi, e_h^p).
\end{aligned}
\end{equation}
Substituting \eqref{eq:proj_dual_3}, \eqref{eq:proj_dual_4} and Lemma \ref{lem:errorequ} into \eqref{eq:proj_dual_2}, we deduce
\begin{equation}\label{eq:proj_dual_final}
\begin{aligned}
\Vert e_0^{\mathbf{u}}\Vert^2 =&(A\nabla_w Q_h \varphi, \nabla_w e_h^{\mathbf{u}})-(\nabla_w \cdot Q_h \varphi, e_h^p )+s_1(Q_h \varphi, e_h^{\mathbf{u}})\\
&+(\nabla_w \cdot Q_h \varphi, e_h^p )-(Q_h^p \xi, \nabla_w \cdot e_h^{\mathbf{u}})-L_{\varphi, \xi}(e_h^{\mathbf{u}})\\
=&L_{\mathbf{u}, p}(Q_h \varphi)- l_5(\mathbf{u}, Q_h^p \xi)+l_5(\varphi, e_h^p)-L_{\varphi, \xi}(e_h^{\mathbf{u}})\\
&-s_2(Q_h^p p, Q_h^p \xi)+s_2(e_h^p,Q_h^p \xi),
\end{aligned}
\end{equation}
where
$$
L_{\mathbf{u}, p}(Q_h \varphi)=l_1(\mathbf{u}, Q_h \varphi )+l_2(p, Q_h \varphi )+l_3(\mathbf{u}, Q_h \varphi )+l_4(\mathbf{u}, Q_h \varphi )+s_1(Q_h\mathbf{u}, Q_h \varphi).
$$
According to Lemma \ref{le:err_ter_est} and $k\geq 1$, we get
\begin{equation}\label{eq:dual_est_1}
\begin{aligned}
\left|L_{\varphi, \xi}(e_h^{\mathbf{u}})\right|\leq Ch(1+h^{\frac{\gamma-1}{2}}+h^{\frac{1-\gamma}{2}})(\Vert \varphi\Vert_2+\Vert \xi\Vert_1)\interleave e_h^{\mathbf{u}}\interleave.
\end{aligned}
\end{equation}
To estimate $L_{\mathbf{u}, p}(Q_h \varphi)$ for $k\geq 1$, we use the definition of the weak gradient {\color{blue}\eqref{weakgradient}} and the property of $L^2$ projections to obtain
\begin{equation*}
\begin{aligned}
\Vert \delta_wQ_h\varphi\Vert^2&=\langle Q_b(I-Q_0)\varphi, \delta_wQ_h\varphi\rangle\leq Ch^{-1}\Vert(I-Q_0)\varphi\Vert \Vert \delta_wQ_h\varphi\Vert\\
&\leq Ch\Vert\varphi\Vert_2 \Vert \delta_wQ_h\varphi \Vert.
\end{aligned}
\end{equation*}
Hence, $\Vert \delta_wQ_h\varphi\Vert \leq Ch\Vert\varphi\Vert_2 $. In virtue of the triangle inequality and the inverse inequality,
 \begin{equation}\label{eq:L2dual_est_1}
\begin{aligned}
\Vert \nabla_wQ_h\varphi\Vert_1\leq \Vert \nabla Q_0\varphi\Vert_1+\Vert \delta_wQ_h\varphi\Vert_1
\leq C\Vert\varphi\Vert_2+Ch^{-1}\Vert \delta_wQ_h\varphi\Vert\leq C \Vert\varphi\Vert_2.
\end{aligned}
\end{equation}
Now from the trace inequality and the property of $L^2$ projections we have
 \begin{equation}\label{eq:L2dual_est_2}
\begin{aligned}
|l_1(\mathbf{u}, Q_h \varphi)|&=\left|\sum_{T\in\mathcal{T}_h}\langle A(I-\mathbf{Q}_s)\nabla\mathbf{u}\cdot\mathbf{n}, Q_0\varphi-Q_b\varphi\rangle_{\partial T}\right|\\
&\leq C\sum_{T\in\mathcal{T}_h}\Vert (I-\mathbf{Q}_s)\nabla\mathbf{u}\Vert_{\partial T}  \Vert Q_0\varphi-\varphi\Vert_{\partial T}\\
&\leq Ch^{s+2}\Vert\mathbf{u}\Vert_{s+2}\Vert\varphi\Vert_2,
\end{aligned}
\end{equation}
where we have used the following identity
 \begin{equation*}
\begin{aligned}
\sum_{T\in\mathcal{T}_h}\langle (I-\mathbf{Q}_s)A\nabla\mathbf{u}\cdot\mathbf{n}, \varphi-Q_b\varphi\rangle_{\partial T}
&=\sum_{T\in\mathcal{T}_h}\langle A\nabla\mathbf{u}\cdot\mathbf{n}, \varphi-Q_b\varphi\rangle_{\partial T}\\
&=\sum_{e\in\mathcal{E}_I}\langle A\nabla\mathbf{u}\cdot\mathbf{n}_e, \llbracket \varphi-Q_b\varphi\rrbracket\rangle_{ e}=0.
\end{aligned}
\end{equation*}
Next,  by taking into account of $k\geq 1$ and either $j\ge n$ or $j\ge 1$, we arrive at
 \begin{equation}\label{eq:L2dual_est_3}
\begin{aligned}
|l_2(p, Q_h \varphi)|&=\left|\sum_{T\in\mathcal{T}_h} \langle (I-Q_h^p)p, (Q_b-Q_0)\varphi\cdot\mathbf{n}\rangle_{\partial T}\right|\\
&\leq Ch^{n+2}\Vert p\Vert_{n+1}\Vert\varphi\Vert_2.
\end{aligned}
\end{equation}

From \eqref{eq:L2dual_est_1} and the property of $L^2$ projections we have
 \begin{equation}\label{eq:L2dual_est_4}
\begin{aligned}
&l_3(\mathbf{u}, Q_h \varphi)=((\mathbf{Q}_s-I)A\nabla\mathbf{u}, \nabla Q_0\varphi) + ((I-Q_0)\mathbf{u}, \nabla\cdot A\mathbf{Q}_s\nabla_wQ_h\varphi) \\
=&((\mathbf{Q}_s-I)A\nabla\mathbf{u}, (I-\mathbf{Q}_s)\nabla Q_0\varphi) + ((I-Q_0)\mathbf{u}, \nabla\cdot A\mathbf{Q}_s\nabla_wQ_h\varphi)\\
\leq& Ch^{s+1}\Vert \mathbf{u}\Vert_{s+2}h\Vert \nabla Q_0\varphi\Vert_1+Ch^{k+1}\Vert \mathbf{u}\Vert_{k+1} \Vert \nabla\cdot \mathbf{Q}_s\nabla_wQ_h\varphi\Vert\\
\leq& C(h^{s+2}\Vert\mathbf{u}\Vert_{s+2}+h^{k+1}\Vert\mathbf{u}\Vert_{k+1})\Vert\varphi\Vert_2.
\end{aligned}
\end{equation}
Using again the definition of weak gradient {\color{blue}\eqref{weakgradient}},  \eqref{eq:delt_est}, and \eqref{eq:L2dual_est_1}, we obtain
 \begin{equation}\label{eq:L2dual_est_5}
\begin{aligned}
|l_4(\mathbf{u}, Q_h \varphi)|=&|(A\nabla_wQ_h\mathbf{u}, (I-\mathbf{Q}_s)\nabla_w Q_h\varphi)| \\
\leq& |((I-\mathbf{Q}_s)A\nabla Q_0\mathbf{u}, (I-\mathbf{Q}_s)\nabla_w Q_h\varphi)| \\
&+Ch^{-1}\Vert Q_b(I-Q_0)\mathbf{u}\Vert \Vert (I-\mathbf{Q}_s)\nabla_w Q_h\varphi\Vert\\
\leq& Ch^{s+1}\Vert \mathbf{u}\Vert_{s+2}h\Vert \nabla_w Q_h\varphi\Vert_1+Ch^{k}\Vert \mathbf{u}\Vert_{k+1} h\Vert \nabla_wQ_h\varphi\Vert_1\\
\leq& C(h^{s+2}\Vert\mathbf{u}\Vert_{s+2}+h^{k+1}\Vert\mathbf{u}\Vert_{k+1})\Vert\varphi\Vert_2.
\end{aligned}
\end{equation}
For the term $s_1( Q_h\mathbf{u}, Q_h \varphi)$, we use the trace inequality and the property of $L^2$ projections to obtain
 \begin{equation}\label{eq:L2dual_est_6}
\begin{aligned}
|s_1( Q_h\mathbf{u}, Q_h \varphi)|&=\left|\sum_{T\in\mathcal{T}_h}h^{-\gamma}\langle Q_b(I-Q_0)\mathbf{u}, Q_b(I-Q_0)\varphi\rangle_{\partial T} \right|\\
&\leq \sum_{T\in\mathcal{T}_h}h^{-\gamma}\Vert  (I-Q_0)\mathbf{u}\Vert_{\partial T} \Vert (I-Q_0)\varphi\Vert_{\partial T}\\
&\leq Ch^{k}h^{1-\gamma}\Vert\mathbf{u}\Vert_{k+1} h\Vert\varphi\Vert_2.
\end{aligned}
\end{equation}
 Finally, from the trace inequality and the property of the $L^2$ projections we have
 \begin{equation}\label{eq:L2dual_est_7}
\begin{aligned}
|l_5(\mathbf{u}, Q_h^p \xi)|&=\left|\sum_{T\mathcal{T}_h}\langle (I-Q_b)\mathbf{u}\cdot\mathbf{n}, (I-Q_b)Q_h^p \xi\rangle_{\partial T}\right|\\
&\leq Ch^{-1}\Vert (I-Q_b)\mathbf{u} \Vert \Vert (I-Q_b)Q_h^p \xi \Vert
\leq Ch^{j+1}\Vert \mathbf{u}\Vert_{j+1} \Vert \xi\Vert_1,\\
|l_5(\varphi, e_h^p)|&=\left|\sum_{T\mathcal{T}_h}\langle (I-Q_b)\varphi\cdot\mathbf{n}, e_h^p \rangle_{\partial T}\right|\\
&\leq Ch \Vert e_h^p \Vert  \Vert\varphi\Vert_2.
\end{aligned}
\end{equation}
 Hence, substituting \eqref{eq:dual_est_1} and \eqref{eq:L2dual_est_2} - \eqref{eq:L2dual_est_7} into \eqref{eq:proj_dual_final}, we infer that
 \begin{equation}\label{eq:err_dual_final}
\begin{aligned}
\Vert e_0^{\mathbf{u}}\Vert^2 \leq& C\Big((1+h^{\frac{\gamma-1}{2}}+h^{\frac{1-\gamma}{2}})\interleave e_h^{\mathbf{u}}\interleave
+\Vert e_h^p \Vert+h^{s+1}\Vert \mathbf{u}\Vert_{s+2}+h^{n+1}\Vert p\Vert_{n+1}\\
&+h^k(1+h^{1-\gamma})\Vert \mathbf{u}\Vert_{k+1}
+h^{j}\Vert \mathbf{u}\Vert_{j+1}\Big) h(\Vert\varphi\Vert_2+\Vert\xi\Vert_1),
\end{aligned}
\end{equation}
which, together with the regularity \eqref{asu:regular} and Theorem \ref{err_esti_energy},
verifies the desired error estimate in $L^2$ for the velocity approximation.
\end{proof}
\medskip

\begin{remark}\label{re:errest2}
For the case of $\gamma=1$ and $\beta=-1$, the error estimate in Theorem  \ref{err_esti_L2}  can be reformulated as follows
\begin{equation*}
\begin{aligned}
\Vert Q_0\mathbf{u}-\mathbf{u}_0 \Vert  &\leq C\Big(h^{s+2}\Vert \mathbf{u}\Vert_{s+2}+h^{n+2}\Vert p\Vert_{n+1}+h^{k+1}\Vert \mathbf{u}\Vert_{k+1}+h^{j+1}\Vert \mathbf{u}\Vert_{j+1}\Big),
\end{aligned}
\end{equation*}
which is clearly of optimal order in terms of the order of polynomials chosen in the finite element scheme.
\end{remark}

\subsection{Error estimate with $j\geq n$ and $\mu=0$}
This subsection is dedicated to the error analysis of the generalized weak Galerkin method when the polynomial order $n$ of the pressure approximation falls in the range $k-1\leq n\leq \min\{m, k+1,j\}$
and the stabilizer $s_2$ vanishes (i.e., $\mu=0$). Recall that from the property of the $L^2$ projections we have $l_5=0$ under this setting.

Since $\mu=0$ or $s_2\equiv 0$, the generalized weak Galerkin method \eqref{sch:WG_FEM} is simplified as follows:
\begin{equation}\label{sch:WG_FEM_1}
\begin{aligned}
(A \nabla_w \mathbf{u}_h, \nabla_w \mathbf{v})-(p_h,\nabla_w\cdot\mathbf{v})+s_1(\mathbf{u}_h, \mathbf{v})&=(\mathbf{f},\mathbf{v}_0), ~~~~~~\forall~ \mathbf{v}\in\mathbf{V}_{h}^0,\\
(\nabla_w\cdot\mathbf{u}_h, q_h)&=0,~~~~~~~~~~~~~\forall~q_h\in W_{h}.
\end{aligned}
\end{equation}
The {\em inf-sup} condition Lemma \ref{lem:inf-sup} can also be reduced to the classical version; namely,
for each $q_h\in W_h$, there exists $\mathbf{v}\in \mathbf{V}_h^0$ such that
\begin{equation}\label{inf_supconditon_1}
\begin{aligned}
b(\mathbf{v}, q_h) &\geq C \Vert q_h\Vert^2,\\
\interleave \mathbf{v}\interleave&\leq C(1+h^{\frac{1-\gamma}{2}}) \Vert q_h\Vert.
\end{aligned}
\end{equation}
Analogue to Lemma \ref{lem:errorequ}, the following result holds true for the error functions for the scheme \eqref{sch:WG_FEM_1}.
\begin{lemma}\label{lem:errorequ_1}
Let $(\mathbf{u}, p)$ be the exact solution of the Stokes problem \eqref{stokeproblem} and $(\mathbf{u}_h, p_h)$ the numerical solution from the generalized weak Galerkin method $\eqref{sch:WG_FEM_1}$. Assume that
$ k-1\leq n\leq\min\{m, k+1, j \}$.
Then we have the following error equations
\begin{equation*}
\begin{aligned}
(A\nabla_w e_h^{\mathbf{u}}, \nabla_w \mathbf{v})-(e^p_h, \nabla_w \cdot \mathbf{v})+s_1(e_h^{\mathbf{u}}, \mathbf{v})
=&l_1(\mathbf{u}, \mathbf{v})+l_2(p, \mathbf{v})+s_1(Q_h\mathbf{u}, \mathbf{v})\\
&+l_3(\mathbf{u}, \mathbf{v})+l_4(\mathbf{u}, \mathbf{v}),\\
(\nabla_w \cdot e_h^{\mathbf{u}}, q_h )=&0,
\end{aligned}
\end{equation*}
for all $\mathbf{v}\in \mathbf{V}_h^0$ and $q_h\in W_h$.
\end{lemma}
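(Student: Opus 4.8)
The plan is to repeat the derivation of Lemma~\ref{lem:errorequ} almost verbatim, exploiting the fact that the first line of the simplified scheme~\eqref{sch:WG_FEM_1} is identical to the first line of~\eqref{sch:WG_FEM}, while the second line has merely lost its $s_2$ term. Since the momentum balance is unchanged, the first error equation is obtained exactly as before: I would apply Lemma~\ref{lem:identity} to rewrite $(A\nabla_w Q_h\mathbf{u},\nabla_w\mathbf{v})$ and arrive at the identity~\eqref{eq:spid2}; invoke the definition of the weak divergence~\eqref{weak-div} together with the hypothesis $k-1\le n\le m$ to produce the pressure identity~\eqref{eq:sec_ter}; and then combine these with the momentum equation of~\eqref{stokeproblem} to reach the projected momentum equation, i.e.\ the first line of~\eqref{eq:proj_problem}. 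Subtracting the first line of~\eqref{sch:WG_FEM_1} from this projected equation yields the first error equation, which is formally the same as in Lemma~\ref{lem:errorequ}.

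For the second error equation I would start from the identity~\eqref{eq:thir_ter}, namely $(\nabla_w\cdot Q_h\mathbf{u},q_h)=(\nabla\cdot\mathbf{u},q_h)+l_5(\mathbf{u},q_h)$, which is valid under $n\le\min\{m,k+1\}$. The incompressibility constraint $\nabla\cdot\mathbf{u}=0$ from~\eqref{stokeproblem} eliminates the first term. The sharper hypothesis $n\le j$ then forces $l_5(\mathbf{u},q_h)=0$: on each flat face the normal $\mathbf{n}$ is constant, so $q_h\mathbf{n}$ restricts to a vector polynomial of degree at most $n\le j$, and the $L^2$ orthogonality of $Q_b$ onto $[P_j(\partial T)]^d$ makes $\langle(Q_b-I)\mathbf{u}\cdot\mathbf{n},q_h\rangle_{\partial T}$ vanish face by face. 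This is precisely the observation recorded just after the definition of the functionals $l_i$. Hence $(\nabla_w\cdot Q_h\mathbf{u},q_h)=0$, and since $\mu=0$ makes $s_2\equiv 0$, subtracting the second line of~\eqref{sch:WG_FEM_1}, which reads $(\nabla_w\cdot\mathbf{u}_h,q_h)=0$, gives the homogeneous identity $(\nabla_w\cdot e_h^{\mathbf{u}},q_h)=0$.

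There is no genuine obstacle here, as the statement is a clean specialization of Lemma~\ref{lem:errorequ} to the case $\mu=0$ under the extra restriction $n\le j$. The only point that requires care is confirming that this restriction is exactly what annihilates the $l_5$ contribution, so that the right-hand side of the divergence equation collapses to zero once the now-absent $s_2$ terms are dropped. Everything on the momentum side is an exact copy of the earlier computation, so I would cite the relevant steps of the proof of Lemma~\ref{lem:errorequ} rather than reproduce them in full.
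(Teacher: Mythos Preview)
Your proposal is correct and matches the paper's own treatment: the paper does not give a separate proof but simply states that the result is ``analogue to Lemma~\ref{lem:errorequ}'' for the simplified scheme, which is precisely the specialization you describe. Your explicit identification of why $l_5(\mathbf{u},q_h)=0$ under $n\le j$ and why the $s_2$ terms drop when $\mu=0$ is exactly the content needed to fill in the omitted details.
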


Like Theorem \ref{err_esti_energy}, from the {\em inf-sup} condition \eqref{inf_supconditon_1} and  Lemma \ref{le:err_ter_est} we may derive an error estimate for the velocity approximation in the energy norm and the pressure in the $L^2$ norm.
\begin{theorem}\label{err_esti_energy_1}
In addition to the assumptions in Lemma \ref{lem:errorequ_1},  assume $\mathbf{u}\in [H^{\alpha}(\Omega)]^d$ where $\alpha=\max\{s+2, k+1\}$ and $p\in H^{n+1}(\Omega)$. Then, there exists a constant $C$ such that
\begin{equation*}
\begin{aligned}
\interleave Q_h\mathbf{u}-\mathbf{u}_h \interleave \leq& C\Big(h^{s+1}(1+h^{\frac{\gamma-1}{2}})\Vert \mathbf{u}\Vert_{s+2}+h^{n+1}(1+h^{\frac{\gamma-1}{2}})\Vert p\Vert_{n+1}\\
&+h^k(1+h^{\frac{1-\gamma}{2}})\Vert \mathbf{u}\Vert_{k+1}\Big),\\
 \Vert Q_h^p p-p_h \Vert \leq& C(1+h^{\frac{1-\gamma}{2}})\Big(h^{s+1}(1+h^{\frac{\gamma-1}{2}})\Vert \mathbf{u}\Vert_{s+2}+h^{n+1}(1+h^{\frac{\gamma-1}{2}})\Vert p\Vert_{n+1}\\
&+h^k(1+h^{\frac{1-\gamma}{2}})\Vert \mathbf{u}\Vert_{k+1}\Big).
\end{aligned}
\end{equation*}
Moreover, the optimal order of convergence is achieved by choosing $\gamma=1$.
\end{theorem}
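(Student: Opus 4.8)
The plan is to follow the same two-step strategy as in the proof of Theorem \ref{err_esti_energy}, but to exploit the simplifications afforded by $\mu=0$ together with $j\ge n$. First I would establish the energy-norm bound for $e_h^{\mathbf{u}}$. Setting $\mathbf{v}=e_h^{\mathbf{u}}$ and $q_h=e_h^p$ in the error equations of Lemma \ref{lem:errorequ_1}, the second equation now reads $(\nabla_w\cdot e_h^{\mathbf{u}}, e_h^p)=0$. This is the crucial simplification: the absence of the $s_2$ stabilizer and the vanishing of $l_5$ (valid under $n\le\min\{m,k+1,j\}$ by the $L^2$-projection property, as noted before Lemma \ref{lem:errorequ_1}) make the divergence error equation homogeneous. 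Consequently the coupling term $(e_h^p,\nabla_w\cdot e_h^{\mathbf{u}})$ in the first equation drops out, leaving $\interleave e_h^{\mathbf{u}}\interleave^2 = l_1(\mathbf{u},e_h^{\mathbf{u}})+l_2(p,e_h^{\mathbf{u}})+s_1(Q_h\mathbf{u},e_h^{\mathbf{u}})+l_3(\mathbf{u},e_h^{\mathbf{u}})+l_4(\mathbf{u},e_h^{\mathbf{u}})$.

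Applying the bounds of Lemma \ref{le:err_ter_est}, each term on the right is controlled by a data-dependent factor times $\interleave e_h^{\mathbf{u}}\interleave$; dividing through by $\interleave e_h^{\mathbf{u}}\interleave$ and collecting the $h^k$ contributions from $l_3,l_4$ together with the $h^k h^{\frac{1-\gamma}{2}}$ contribution from $s_1(Q_h\mathbf{u},\cdot)$ into the single factor $h^k(1+h^{\frac{1-\gamma}{2}})\Vert\mathbf{u}\Vert_{k+1}$, I obtain the first asserted estimate.

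Next I would recover the pressure bound from the reduced inf-sup condition \eqref{inf_supconditon_1}. For $q_h=e_h^p$ choose $\mathbf{v}\in\mathbf{V}_h^0$ with $b(\mathbf{v},e_h^p)\ge C\Vert e_h^p\Vert^2$ and $\interleave\mathbf{v}\interleave\le C(1+h^{\frac{1-\gamma}{2}})\Vert e_h^p\Vert$. Rearranging the first error equation of Lemma \ref{lem:errorequ_1} gives $(e_h^p,\nabla_w\cdot\mathbf{v})=(A\nabla_w e_h^{\mathbf{u}},\nabla_w\mathbf{v})+s_1(e_h^{\mathbf{u}},\mathbf{v})-\big(l_1(\mathbf{u},\mathbf{v})+l_2(p,\mathbf{v})+s_1(Q_h\mathbf{u},\mathbf{v})+l_3(\mathbf{u},\mathbf{v})+l_4(\mathbf{u},\mathbf{v})\big)$, and bounding the right-hand side by the Cauchy--Schwarz inequality together with Lemma \ref{le:err_ter_est} yields $C\Vert e_h^p\Vert^2\le b(\mathbf{v},e_h^p)\le C\big(\interleave e_h^{\mathbf{u}}\interleave+\text{data terms}\big)\interleave\mathbf{v}\interleave$. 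Substituting the inf-sup bound on $\interleave\mathbf{v}\interleave$, cancelling one factor of $\Vert e_h^p\Vert$, and finally inserting the energy estimate just obtained for $\interleave e_h^{\mathbf{u}}\interleave$ produces the stated $L^2$ pressure bound, the prefactor $(1+h^{\frac{1-\gamma}{2}})$ arising directly from the inf-sup stability constant.

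Because $s_2$ is absent, I do not anticipate a genuine obstacle: the argument is strictly simpler than that of Theorem \ref{err_esti_energy}, where the terms $s_2(e_h^p,e_h^p)$ had to be carried along and absorbed through the $Ch^{\beta+1}$ penalty. The only point requiring care is confirming that the homogeneous divergence equation in Lemma \ref{lem:errorequ_1} is indeed legitimate, i.e.\ that $l_5$ vanishes under the hypothesis $n\le\min\{m,k+1,j\}$; once this is in place the two-step coercivity-plus-inf-sup scheme goes through routinely. Finally, setting $\gamma=1$ collapses every $h^{\frac{1-\gamma}{2}}$ and $h^{\frac{\gamma-1}{2}}$ factor to a constant, leaving the optimal orders $h^{s+1}$, $h^k$, and $h^{n+1}$ as claimed.
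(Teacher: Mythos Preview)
Your proposal is correct and follows essentially the same approach the paper indicates: the paper does not give a detailed proof of Theorem \ref{err_esti_energy_1}, but simply states that the estimate follows ``like Theorem \ref{err_esti_energy}'' from the reduced inf-sup condition \eqref{inf_supconditon_1} and Lemma \ref{le:err_ter_est}, and your two-step coercivity-plus-inf-sup argument is precisely this. Your observation that the homogeneous second error equation $(\nabla_w\cdot e_h^{\mathbf{u}},e_h^p)=0$ eliminates the coupling term, and that $l_5=0$ under $j\ge n$, captures exactly the simplifications the paper exploits in passing from the general case to this one.
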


\begin{theorem}\label{err_esti_L2_1}
In addition to the assumptions in Theorem \ref{err_esti_energy_1}, assume that $k\ge 1$ and the auxiliary problem \eqref{pro:dual} is $H^2$-regular. Then, there exists a constant $C$ such that
\begin{equation*}
\begin{aligned}
\Vert Q_0\mathbf{u}-\mathbf{u}_0 \Vert  &\leq C(1+h^{\frac{\gamma-1}{2}}+h^{\frac{1-\gamma}{2}})\Big(h^{s+2}(1+h^{\frac{\gamma-1}{2}})\Vert \mathbf{u}\Vert_{s+2}\\
&+h^{n+2}(1+h^{\frac{\gamma-1}{2}})\Vert p\Vert_{n+1}+h^{k+1}(1+h^{\frac{1-\gamma}{2}}+h^{1-\gamma})\Vert \mathbf{u}\Vert_{k+1}\Big).
\end{aligned}
\end{equation*}
Moreover, the optimal order of convergence is achieved when $\gamma=1$.
\end{theorem}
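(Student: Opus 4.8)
The plan is to follow the duality argument used in the proof of Theorem~\ref{err_esti_L2}, streamlined by the two structural features of the present setting: the stabilizer $s_2$ vanishes ($\mu=0$), and, because $j\ge n$, the functional $l_5$ is identically zero. First I would introduce the auxiliary problem \eqref{pro:dual} with right-hand side $e_0^{\mathbf{u}}$ and record the projected identity analogous to \eqref{eq:proj_dual_1}, namely
\begin{equation*}
(A\nabla_w Q_h\varphi,\nabla_w\mathbf{v})-(Q_h^p\xi,\nabla_w\cdot\mathbf{v})+s_1(Q_h\varphi,\mathbf{v})=(e_0^{\mathbf{u}},\mathbf{v}_0)+L_{\varphi,\xi}(\mathbf{v}),
\end{equation*}
valid for all $\mathbf{v}\in\mathbf{V}_h^0$. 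Since $\varphi=0$ on $\partial\Omega$ forces $Q_h\varphi\in\mathbf{V}_h^0$, the test choices $\mathbf{v}=e_h^{\mathbf{u}}$ above and $\mathbf{v}=Q_h\varphi$ in the first error equation of Lemma~\ref{lem:errorequ_1} are both admissible.

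The key simplification comes from the second error equation of Lemma~\ref{lem:errorequ_1}, which reads $(\nabla_w\cdot e_h^{\mathbf{u}},q_h)=0$ for all $q_h\in W_h$; in particular $(Q_h^p\xi,\nabla_w\cdot e_h^{\mathbf{u}})=0$. Likewise, the identity \eqref{eq:proj_dual_4} together with $l_5\equiv 0$ gives $(\nabla_w\cdot Q_h\varphi,e_h^p)=0$. Setting $\mathbf{v}=e_h^{\mathbf{u}}$ in the projected identity to get the analogue of \eqref{eq:proj_dual_2}, then using the first error equation with $\mathbf{v}=Q_h\varphi$ together with the symmetry of $A$ and of $s_1$ to identify $(A\nabla_w Q_h\varphi,\nabla_w e_h^{\mathbf{u}})+s_1(Q_h\varphi,e_h^{\mathbf{u}})$ with $L_{\mathbf{u},p}(Q_h\varphi)$, I expect the two cancellations above to collapse the expression for $\Vert e_0^{\mathbf{u}}\Vert^2$ down to
\begin{equation*}
\Vert e_0^{\mathbf{u}}\Vert^2=L_{\mathbf{u},p}(Q_h\varphi)-L_{\varphi,\xi}(e_h^{\mathbf{u}}),
\end{equation*}
which is the analogue of \eqref{eq:proj_dual_final} but without the $l_5$ and $s_2$ contributions that appear in the $\mu>0$ case.

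The remaining steps are estimation only, and the required bounds are already in hand. For $L_{\varphi,\xi}(e_h^{\mathbf{u}})$ I would reuse \eqref{eq:dual_est_1}, and for the five pieces of $L_{\mathbf{u},p}(Q_h\varphi)=l_1+l_2+l_3+l_4+s_1(Q_h\mathbf{u},Q_h\varphi)$ I would reuse the estimates \eqref{eq:L2dual_est_2}--\eqref{eq:L2dual_est_6}; note that the $l_2$ bound \eqref{eq:L2dual_est_3} applies here precisely because we are in the regime $j\ge n$ with $k\ge 1$. Collecting these and invoking the $H^2$-regularity \eqref{asu:regular} to divide by $\Vert e_0^{\mathbf{u}}\Vert$ yields an inequality of the form
\begin{equation*}
\Vert e_0^{\mathbf{u}}\Vert\le Ch\Big((1+h^{\frac{\gamma-1}{2}}+h^{\frac{1-\gamma}{2}})\interleave e_h^{\mathbf{u}}\interleave+h^{s+1}\Vert\mathbf{u}\Vert_{s+2}+h^{n+1}\Vert p\Vert_{n+1}+h^k(1+h^{1-\gamma})\Vert\mathbf{u}\Vert_{k+1}\Big).
\end{equation*}
Finally I would insert the energy-norm bound for $\interleave e_h^{\mathbf{u}}\interleave$ from Theorem~\ref{err_esti_energy_1} and multiply through by $h$ to obtain the stated estimate; the case $\gamma=1$ then gives the optimal orders.

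The main thing to get right is bookkeeping rather than any single hard inequality: one must verify that every term involving $e_h^p$ and every $s_2$-term truly drops out, so that no $\Vert e_h^p\Vert$ factor survives in the final bound. This is the structural difference from Theorem~\ref{err_esti_L2}, where the surviving $l_5(\varphi,e_h^p)$ term forced an extra $\Vert e_h^p\Vert$ contribution into \eqref{eq:err_dual_final}; here its disappearance is exactly what permits the clean optimal-order estimate.
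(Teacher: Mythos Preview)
Your proposal is correct and follows essentially the same duality argument as the paper's own proof, which simply states that the proof of Theorem~\ref{err_esti_L2} carries over with $l_5\equiv 0$ and $s_2\equiv 0$ and then records the refined $l_2$ estimate \eqref{eq:L2dual_est_3_1}. Your identification of the clean identity $\Vert e_0^{\mathbf{u}}\Vert^2=L_{\mathbf{u},p}(Q_h\varphi)-L_{\varphi,\xi}(e_h^{\mathbf{u}})$ and your reuse of \eqref{eq:dual_est_1} and \eqref{eq:L2dual_est_2}--\eqref{eq:L2dual_est_6} are exactly the intended simplifications.
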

\begin{proof}
A proof can be given by following the same line as that of Theorem \ref{err_esti_L2}, but in a simpler manner. For example, we have in \eqref{eq:proj_dual_final} that $l_5(\mathbf{u}, Q_h^p \xi)=0$ and $l_5(\varphi, e_h^p)=0$ due to the selection of $n\leq j$.
Next, the following refined estimate holds true
 \begin{equation}\label{eq:L2dual_est_3_1}
\begin{aligned}
l_2(p, Q_h \varphi)&=\sum_{T\in\mathcal{T}_h} \langle (I-Q_h^p)p, (Q_b-Q_0)\varphi\cdot\mathbf{n}\rangle_{\partial T}\\
&=\sum_{T\in\mathcal{T}_h} \langle (I-Q_h^p)p, (I-Q_0)\varphi\cdot\mathbf{n}\rangle_{\partial T}\\
&\leq Ch^{n+1}\Vert p\Vert_{n+1}h\Vert\varphi\Vert_2,
\end{aligned}
\end{equation}
where we have used the fact that $\sum_{T\in\mathcal{T}_h} \langle (I-Q_h^p)p, (Q_b-I)\varphi\cdot\mathbf{n}\rangle_{\partial T}=0$ if $n\leq j$. The rest of the proof remains no change, and is thus ommited.
\end{proof}

\medskip
The following is a special case of the error estimate for the Stokes problem with finite elements consisting of the lowest order of polynomials for some of the components.
\begin{corollary}\label{thm:k0}
Assume $k=0$, $l=0$, $n\leq\{m,k+1,j\}$, $\mathbf{u}\in [H^{2}(\Omega)]^d$, and $p\in H^{n+1}(\Omega)$.
Then, there exists a constant $C$ such that
\begin{equation*}
\begin{aligned}
\interleave Q_h\mathbf{u}-\mathbf{u}_h \interleave &\leq C\Big(h (1+h^{\frac{\gamma-1}{2}})\Vert \mathbf{u}\Vert_{2}+h^{n+1}
(1+h^{\frac{\gamma-1}{2}})\Vert p\Vert_{n+1}+h^{\frac{1-\gamma}{2}}\Vert \mathbf{u}\Vert_{1}\Big),\\
\Vert Q_h^p p-p_h \Vert \leq& C\Big(h (1+h^{\frac{\gamma-1}{2}})\Vert \mathbf{u}\Vert_{2}+h^{n+1}(1+h^{\frac{\gamma-1}{2}})\Vert p\Vert_{n+1}\\
&+h^{\frac{1-\gamma}{2}}\Vert \mathbf{u}\Vert_{1}\Big)(1+h^{\frac{1-\gamma}{2}}),
\end{aligned}
\end{equation*}
and
\begin{equation*}
\begin{aligned}
\Vert Q_0\mathbf{u}-\mathbf{u}_0 \Vert  &\leq C(h(1+h^{\frac{\gamma-1}{2}})+h^{\frac{1-\gamma}{2}})\Big( h (1+h^{\frac{\gamma-1}{2}})\Vert \mathbf{u}\Vert_{2}+h^{n+1}(1+h^{\frac{\gamma-1}{2}})\Vert p\Vert_{n+1}\\
&+h^{\frac{1-\gamma}{2}}\Vert \mathbf{u}\Vert_{1}\Big)+C h\Vert \mathbf{u}\Vert_2+C h^{n+1}\Vert p\Vert_{n+1}.
\end{aligned}
\end{equation*}
The best order of convergence is theoretically achieved by choosing $\gamma=0$.
\end{corollary}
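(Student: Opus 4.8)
The plan is to read Corollary \ref{thm:k0} as the specialization of Theorem \ref{err_esti_energy_1} and Theorem \ref{err_esti_L2_1} to the lowest-order choice $k=0$, $l=0$ (so that $s=\min\{j,l\}=0$ and we are in the regime $n\le j$, $\mu=0$ of \eqref{sch:WG_FEM_1}), and to explain why this choice both simplifies the analysis and forces the loss of a half power in $\gamma$. The single structural observation driving everything is that when $k=0$ the interior velocity $\mathbf{v}_0$ is piecewise constant, so $\nabla\mathbf{v}_0\equiv 0$, while $l=0$ forces $\nabla_w\mathbf{v}=\delta_w\mathbf{v}\in[P_0(T)]^{d\times d}$ and makes $\bm{Q}_sA\nabla_w\mathbf{v}=\bm{Q}_0A\nabla_w\mathbf{v}$ a piecewise-constant matrix field of vanishing divergence. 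First I would record the consequence that the functionals $l_3$ and $l_4$ of Lemma \ref{le:err_ter_est} vanish identically under $k=0$, $l=0$: in $l_3$ the first term dies because $\nabla\mathbf{v}_0=0$ and the second because $\nabla\cdot\bm{Q}_0A\nabla_w\mathbf{v}=0$, while in $l_4$ the factor $(I-\bm{Q}_0)\nabla_w\mathbf{v}=0$. This is exactly what removes the otherwise non-vanishing $h^k\|\mathbf{u}\|_{k+1}=\|\mathbf{u}\|_1$ contributions from the bound.

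Granting this, the energy and pressure bounds follow by rerunning the proof of Theorem \ref{err_esti_energy_1} with $l_3=l_4=0$. The only surviving right-hand terms are $l_1(\mathbf{u},\mathbf{v})$, $l_2(p,\mathbf{v})$, and $s_1(Q_h\mathbf{u},\mathbf{v})$, whose estimates from Lemma \ref{le:err_ter_est} with $s=0$, $k=0$ read $Ch(1+h^{(\gamma-1)/2})\|\mathbf{u}\|_2$, $Ch^{n+1}(1+h^{(\gamma-1)/2})\|p\|_{n+1}$, and $Ch^{(1-\gamma)/2}\|\mathbf{u}\|_1$. The key point is that the third term $h^k(1+h^{(1-\gamma)/2})\|\mathbf{u}\|_{k+1}$ of Theorem \ref{err_esti_energy_1} is now replaced by its $s_1$-part alone, i.e. by $h^{(1-\gamma)/2}\|\mathbf{u}\|_1$, so the non-convergent bare $\|\mathbf{u}\|_1$ disappears. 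Testing Lemma \ref{lem:errorequ_1} with $\mathbf{v}=e_h^{\mathbf{u}}$ (using $s_2\equiv0$ and $(\nabla_w\cdot e_h^{\mathbf{u}},q_h)=0$) yields the energy estimate, and the reduced inf-sup condition \eqref{inf_supconditon_1} then delivers the pressure bound with its extra factor $(1+h^{(1-\gamma)/2})$.

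The genuinely different part, and the main obstacle, is the $L^2$ velocity estimate, since Theorem \ref{err_esti_L2_1} is stated only for $k\ge 1$ and its duality argument relies on the second-order boundary approximation $\|(I-Q_0)\varphi\|_{\partial T}\le Ch^{3/2}\|\varphi\|_2$, which is unavailable when $Q_0$ projects onto constants. The plan is to reopen the duality identity \eqref{eq:proj_dual_final} for the auxiliary problem \eqref{pro:dual}, keeping $s_2\equiv0$ and $l_5=0$ (valid since $n\le j$), so that $\|e_0^{\mathbf{u}}\|^2=L_{\mathbf{u},p}(Q_h\varphi)-L_{\varphi,\xi}(e_h^{\mathbf{u}})$. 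Here $l_3(\mathbf{u},Q_h\varphi)$ and $l_4(\mathbf{u},Q_h\varphi)$ again vanish, as do the three interior terms of $L_{\varphi,\xi}(e_h^{\mathbf{u}})$ carrying $\nabla e_0^{\mathbf{u}}$ or $(I-\bm{Q}_0)\nabla_w e_h^{\mathbf{u}}$; thus $L_{\varphi,\xi}(e_h^{\mathbf{u}})$ collapses to an $l_1$-type term, an $l_2$-type term, and $s_1(Q_h\varphi,e_h^{\mathbf{u}})$, bounded by $C(h(1+h^{(\gamma-1)/2})+h^{(1-\gamma)/2})\interleave e_h^{\mathbf{u}}\interleave\|e_0^{\mathbf{u}}\|$ via the regularity \eqref{asu:regular}, with the half-power $h^{(1-\gamma)/2}$ coming precisely from $s_1(Q_h\varphi,e_h^{\mathbf{u}})$.

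The delicate step is re-estimating $l_1(\mathbf{u},Q_h\varphi)$ and $l_2(p,Q_h\varphi)$ with only first-order boundary control $\|(I-Q_0)\varphi\|_{\partial T}\le Ch^{1/2}\|\varphi\|_1$. Combined with $\|(I-\bm{Q}_0)\nabla\mathbf{u}\|_{\partial T}\le Ch^{1/2}\|\mathbf{u}\|_2$ and $\|(I-Q_h^p)p\|_{\partial T}\le Ch^{n+1/2}\|p\|_{n+1}$ (and the cancellation $\sum_T\langle(I-Q_h^p)p,(Q_b-I)\varphi\cdot\mathbf{n}\rangle_{\partial T}=0$ that holds when $n\le j$), these yield $|l_1(\mathbf{u},Q_h\varphi)|\le Ch\|\mathbf{u}\|_2\|\varphi\|_1$ and $|l_2(p,Q_h\varphi)|\le Ch^{n+1}\|p\|_{n+1}\|\varphi\|_1$, one power of $h$ worse than in the $k\ge1$ case; these are the source of the additive terms $Ch\|\mathbf{u}\|_2+Ch^{n+1}\|p\|_{n+1}$ in the statement. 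Finally I would bound $|s_1(Q_h\mathbf{u},Q_h\varphi)|\le Ch^{1-\gamma}\|\mathbf{u}\|_1\|\varphi\|_1$, invoke \eqref{asu:regular} to replace $\|\varphi\|_1+\|\xi\|_1$ by $C\|e_0^{\mathbf{u}}\|$, cancel one factor $\|e_0^{\mathbf{u}}\|$, and insert the energy estimate for $\interleave e_h^{\mathbf{u}}\interleave$; the residual $h^{1-\gamma}\|\mathbf{u}\|_1$ is absorbed into $(h(1+h^{(\gamma-1)/2})+h^{(1-\gamma)/2})$ times the energy bound, giving the claimed inequality. The optimality claim is then immediate: in the energy bound the exponents $(\gamma+1)/2$ of $\|\mathbf{u}\|_2$ and $(1-\gamma)/2$ of $\|\mathbf{u}\|_1$ are balanced exactly at $\gamma=0$, where both equal $\tfrac12$.
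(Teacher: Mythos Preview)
The paper states Corollary~\ref{thm:k0} without proof, presenting it simply as ``a special case of the error estimate'' derived from Theorems~\ref{err_esti_energy_1} and~\ref{err_esti_L2_1}. Your proposal is correct and in fact supplies the argument the paper omits. In particular, you correctly spot that naive substitution of $k=0$, $s=0$ into Theorem~\ref{err_esti_energy_1} would leave a non-convergent term $(1+h^{(1-\gamma)/2})\|\mathbf{u}\|_1$, and that Theorem~\ref{err_esti_L2_1} is stated only for $k\ge 1$; hence neither inequality in the corollary is a literal specialization, and both require the refinement you give.

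Your key observation---that for $k=l=0$ one has $\nabla\mathbf{v}_0\equiv 0$ and $\nabla_w\mathbf{v}=\delta_w\mathbf{v}\in[P_0(T)]^{d\times d}$, so $l_3$ and $l_4$ vanish identically---is exactly what is needed to replace the troublesome $h^k(1+h^{(1-\gamma)/2})\|\mathbf{u}\|_{k+1}$ contribution by the single surviving stabilizer term $h^{(1-\gamma)/2}\|\mathbf{u}\|_1$. Your reopening of the duality argument is also on target: the same vanishing of $l_3,l_4$ (applied symmetrically in $L_{\mathbf{u},p}(Q_h\varphi)$ and $L_{\varphi,\xi}(e_h^{\mathbf{u}})$), together with the weaker first-order boundary approximation $\|(I-Q_0)\varphi\|_{\partial T}\le Ch^{1/2}\|\varphi\|_1$ available when $Q_0$ projects onto constants, produces precisely the additive $Ch\|\mathbf{u}\|_2+Ch^{n+1}\|p\|_{n+1}$ loss and the multiplicative prefactor $h(1+h^{(\gamma-1)/2})+h^{(1-\gamma)/2}$ appearing in the statement. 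The absorption of $h^{1-\gamma}\|\mathbf{u}\|_1$ into that prefactor times the energy bound (via its $h^{(1-\gamma)/2}\|\mathbf{u}\|_1$ term) is legitimate, and the balancing argument for $\gamma=0$ is correct.
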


\section{Numerical Experiments}\label{sec:6}
In this section we report some numerical results for the weak Galerkin finite element scheme \eqref{sch:WG_FEM} in 2d. For simplicity, we denote by $([P_k]^2,[P_j]^2,[P_l]^{2\times2}, P_m,P_n)$ the element in which the polynomial space $[P_l]^{2\times2}$ is used in the computation of $\delta_w\mathbf{v}$ in the generalized weak gradient \eqref{weakgradient}. Our implementation of \eqref{sch:WG_FEM} is based on the following configurations: $\gamma=1$, $\beta=-1$,  $\mu=0$ when $n\leq j$, and $\mu=1$ when $n>j$.

To demonstrate the performance of the weak Galerkin finite element scheme \eqref{sch:WG_FEM} with various combinations of polynomial spaces, we have implemented the numerical scheme with the following selection of the finite elements: (1) the element $([P_1]^2,[P_0]^2,[P_1]^{2\times2},P_0,P_0)$ with $\mu=0$, (2) the element $([P_2]^2,[P_1]^2,[P_1]^{2\times2},P_0,P_0)$ with $\mu=0$, (3) the WG-variation of the Taylor-Hood element $([P_2]^2,[P_1]^2,[P_1]^{2\times2},P_1,P_1)$ with $\mu=0$, (4) the element $([P_2]^2,[P_1]^2,[P_0]^{2\times2},P_1,P_1)$ with $\mu=0$, and (5) the element $([P_2]^2,[P_1]^2,[P_0]^{2\times2},P_2,P_2)$ with $\mu=1$.

\subsection{Test Case 1: uniform partition}\label{ex:01} This test case assumes an exact solution of $\mathbf{u}=[x^2 y; -x y^2]$ and $p=10(2x-1)(2y-1)$ in the domain $\Omega=(0,1)^2$. {\em Uniform triangular partitions} are employed in the implementation of the numerical scheme \eqref{sch:WG_FEM}. The Dirichlet boundary value and the force term $\mathbf{f}$ are determined by the exact solution.

Table \ref{TT01} shows the numerical results for the element $([P_1]^2,[P_0]^2,[P_1]^{2\times2},P_0,P_0)$. It can be seen that the numerical velocity converges at the rate of $O(h^2)$ in $L^2$ norm and $O(h)$ in an $H^1$-equivalent norm denoted by $\3bar\cdot\3bar$. The pressure approximation exhibits a convergence at the rate of $O(h)$ in $L^2$ norm.  Table \ref{TT01.1} illustrates the numerical performance for the elements $([P_2]^2,[P_1]^2,[P_1]^{2\times2},P_0,P_0)$. It appears that the rate of convergence remains the same as the element $([P_1]^2,[P_0]^2,[P_1]^{2\times2},P_0,P_0)$ even though quadratic and linear functions are employed for the velocity in the interior and on the boundary of each element. This comparison reveals that the pressure approximation plays a significant role in the overall performance of the numerical scheme.

Table \ref{TT02} illustrates the performance of the numerical scheme with the WG-variation of the Taylor-Hood element $([P_2]^2,[P_1]^2,[P_1]^{2\times2},P_1,P_1)$. It can be seen that the numerical velocity converges at the rate of $O(h^2)$ and $O(h^3)$ in $H^1$ and $L^2$ norms, respectively. The convergence for the pressure approximation is seen to be of $O(h^2)$ in $L^2$.

Table \ref{TT03} shows the results for the element $([P_2]^2,[P_1]^2,[P_0]^{2\times2},P_1,P_1)$. Due to the use of piecewise constants for the weak gradient approximation, the convergence of the velocity and the pressure approximations are similar to those in Tables \ref{TT01} and \ref{TT01.1}. This experiment indicates that the approximation of the gradient also plays a significant role for the overall performance of the numerical scheme.

Table \ref{TT04} exhibits the performance of the element $([P_2]^2,[P_1]^2,[P_0]^{2\times2},P_2,P_2)$. Note that as the pressure is approximated by quadratic polynomials which is richer than the velocity space, a stabilizer for the pressure is needed in order to have the desired stability and convergence.
The results in Table \ref{TT04} indicate that the pressure has a convergence of $O(h^{1.6})$ in $L^2$. It is also observed that the velocity has a convergence of $O(h)$ and $O(h^2)$ in $H^1$ and $L^2$ norms, respectively.

All the numerical results are in consistency with the theory developed in previous sections.

\begin{table}[ht]
  \centering \renewcommand{\arraystretch}{1.05}
 \small
 \caption{Test Case \ref{ex:01}: convergence performance of the element $([P_1]^2,[P_0]^2,[P_1]^{2\times2},P_0,P_0)$, $\gamma=1$, and $\mu=0$ (i.e.; no stabilizer $s_2$) }
 \label{TT01}
  \begin{tabular}{ccccccc}
  \hline\noalign{\smallskip}
  \multicolumn{1}{c}{h}&\multicolumn{2}{c}{$\3bar e_h^\mathbf{u}\3bar $}
    &\multicolumn{2}{c}{$\|e^\mathbf{u}_h\|$}
      &\multicolumn{2}{c}{$\|e_h^p\|$}\\
\cline{2-3}\cline{4-5}\cline{6-7}\noalign{\smallskip}
 &error  &order &error  &order &error  &order\\
  \hline
 1/16   &3.2027e-01 &~~~ &9.7584e-03    &~~~    &6.3145e-01   &~~~    \\
 1/32   &1.6542e-01 &0.95 &2.6134e-03    &1.90   &3.0027e-01   &1.07   \\
 1/64   &8.3633e-02 &0.98 &6.6883e-04    &1.97   &1.4387e-01   &1.06  \\
 1/128 &4.1966e-02 &0.99 &1.6849e-04    &1.99   &6.9840e-02   &1.04  \\
\hline
  \end{tabular}
\end{table}

\begin{table}[!th]
  \centering
\caption{Test Case \ref{ex:01}: convergence performance of the element $([P_2]^2,[P_1]^2,[P_1]^{2\times2},P_0,P_0)$, $\gamma=1$, and $\mu=0$ (i.e.; no stabilizer $s_2$)}
 \label{TT01.1}
  \small
  \begin{tabular}{ccccccc}
  \hline\noalign{\smallskip}
  \multicolumn{1}{c}{h}&\multicolumn{2}{c}{$\3bar e_h^\mathbf{u}\3bar $}
    &\multicolumn{2}{c}{$||e^\mathbf{u}_h||$}
      &\multicolumn{2}{c}{$||e_h^p||$}\\
\cline{2-3}\cline{4-5}\cline{6-7}\noalign{\smallskip}
 &error  &order &error  &order &error  &order\\
  \hline
 1/16   &2.3761e-01 &~~~ &4.6357e-03    &~~~    &1.1047e+00   &~~~    \\
 1/32   &1.1980e-01 &0.99 &1.1788e-03    &1.97   &5.5097e-01   &1.00   \\
 1/64   &6.0066e-02 &1.00 &2.9637e-04    &1.99   &2.7437e-01   &1.01  \\
 1/128 &3.0059e-02 &1.00 &7.4228e-05    &2.00   &1.3671e-01   &1.01  \\
\hline
  \end{tabular}
\end{table}

\begin{table}[!th]
  \centering
 \caption{Test Case \ref{ex:01}: performance of the elements $([P_2]^2,[P_1]^2,[P_1]^{2\times2},P_1,P_1)$, $\gamma=1$, and $\mu=0$ (i.e.; no stabilizer $s_2$) }
 \label{TT02}
  \small
  \begin{tabular}{ccccccc}
  \hline\noalign{\smallskip}
  \multicolumn{1}{c}{h}&\multicolumn{2}{c}{$\3bar e_h^\mathbf{u}\3bar $}
    &\multicolumn{2}{c}{$||e^\mathbf{u}_h||$}
      &\multicolumn{2}{c}{$||e_h^p||$}\\
\cline{2-3}\cline{4-5}\cline{6-7}\noalign{\smallskip}
 &error  &order &error  &order &error  &order\\
  \hline
 1/16   &1.9652e-02  &~~~  &3.8689e-04    &~~~    &2.8023e-02    &~~~    \\
 1/32   &4.9268e-03 &2.00  &4.8402e-05    &3.00   &7.0058e-03    &2.00   \\
 1/64   &1.2334e-03 &2.00  &6.0531e-06    &3.00   &1.7515e-03    &2.00  \\
 1/128 &3.0855e-04 &2.00  &7.5683e-07    &3.00   &4.3787e-04    &2.00  \\
\hline
  \end{tabular}
\end{table}

\begin{table}[!th]
  \centering
 \caption{Test Case \ref{ex:01}: performance of the element $([P_2]^2,[P_1]^2,[P_0]^{2\times2},P_1,P_1)$, $\gamma=1$, and $\mu=0$ (i.e.;  no stabilizer $s_2$) }
 \label{TT03}
  \small
  \begin{tabular}{ccccccc}
  \hline\noalign{\smallskip}
  \multicolumn{1}{c}{h}&\multicolumn{2}{c}{$\3bar e_h^\mathbf{u}\3bar $}
    &\multicolumn{2}{c}{$||e^\mathbf{u}_h||$}
      &\multicolumn{2}{c}{$||e_h^p||$}\\
\cline{2-3}\cline{4-5}\cline{6-7}\noalign{\smallskip}
 &error  &order &error  &order &error  &order\\
  \hline
 1/16   &5.4395e-02  &~~~  &7.5953e-04   &~~~   &3.5927e-02  &~~~    \\
 1/32   &2.4019e-02 &1.18  &1.1646e-04   &2.71   &1.3231e-02   &1.44   \\
 1/64   &1.1570e-02 &1.05 &2.2052e-05    &2.40  &5.8757e-03   &1.17  \\
 1/128 &5.7282e-03 &1.01 &4.9661e-06    &2.15  &2.8377e-03  &1.05  \\
\hline
  \end{tabular}
\end{table}

\begin{table}[!th]
  \centering
\caption{Test Case \ref{ex:01}: performance of the element $([P_2]^2,[P_1]^2,[P_0]^{2\times2},P_2,P_2)$, $\gamma=1$, $\beta=-1$, and $\mu=1$ (i.e.;  with stabilizer $s_2$) }
 \label{TT04}
  \small
  \begin{tabular}{ccccccc}
  \hline\noalign{\smallskip}
  \multicolumn{1}{c}{h}&\multicolumn{2}{c}{$\3bar e_h^\mathbf{u}\3bar $}
    &\multicolumn{2}{c}{$||e^\mathbf{u}_h||$}
      &\multicolumn{2}{c}{$||e_h^p||$}\\
\cline{2-3}\cline{4-5}\cline{6-7}\noalign{\smallskip}
 &error  &order &error  &order &error  &order\\
  \hline
 1/16   &6.4026e-02  &~~~  &5.9000e-04   &~~~   &1.2409e-02   &~~~    \\
 1/32   &3.2213e-02 &0.99 &1.4390e-04   &2.04   &4.1278e-03   &1.59   \\
 1/64   &1.6153e-02 &1.00 &3.5472e-05    &2.02  &1.3752e-03   &1.59  \\
 1/128 &8.0874e-03 &1.00 &8.8030e-06    &2.01  &4.6322e-04   &1.57  \\
\hline
  \end{tabular}
\end{table}

\subsection{Test Case 2: non-uniform partitions}\label{ex:02} 
This numerical experiment is based on non-uniform triangular partitions for the domain $\Omega=(0,1)^2$; see Fig. \ref{fig01} for an illustration {\color{blue}(Mesh generation in Gmsh \cite{Geuzaine2009})} . The exact solution is chosen as
$\mathbf{u}=[-\cos x \sin y; \sin x \cos y]$ and $p=e^{x^2}\sin y$.
The boundary value and the force term $\mathbf{f}$ are determined by the exact solution.

Tables \ref{TT05} - \ref{TT08} illustrate the numerical results of the generalized weak Galerkin method with five different elements. Tables \ref{TT05}, \ref{TT05.1}, and \ref{TT07} show the numerical performance of the scheme with linear and quadratic convergence for the velocity in the triple-bar norm and the $L^2$ norm and linear convergence for the pressure in
$L^2$. Table \ref{TT06} suggests a convergence of order $O(h^2)$ and $O(h^3)$ for the velocity in the energy (i.e., triple-bar) and $L^2$ norm with the WG-variation of the Taylor-Hood element $([P_2]^2,[P_1]^2,[P_1]^{2\times2},P_1,P_1)$. Note that the pressure has quadratic convergence in $L^2$ as predicted by the theory.

\begin{figure}[!th]
\centering
\subfigure[~]{
\begin{minipage}{5.5cm}
\centering
\includegraphics[width = 4.5cm,height =4cm]{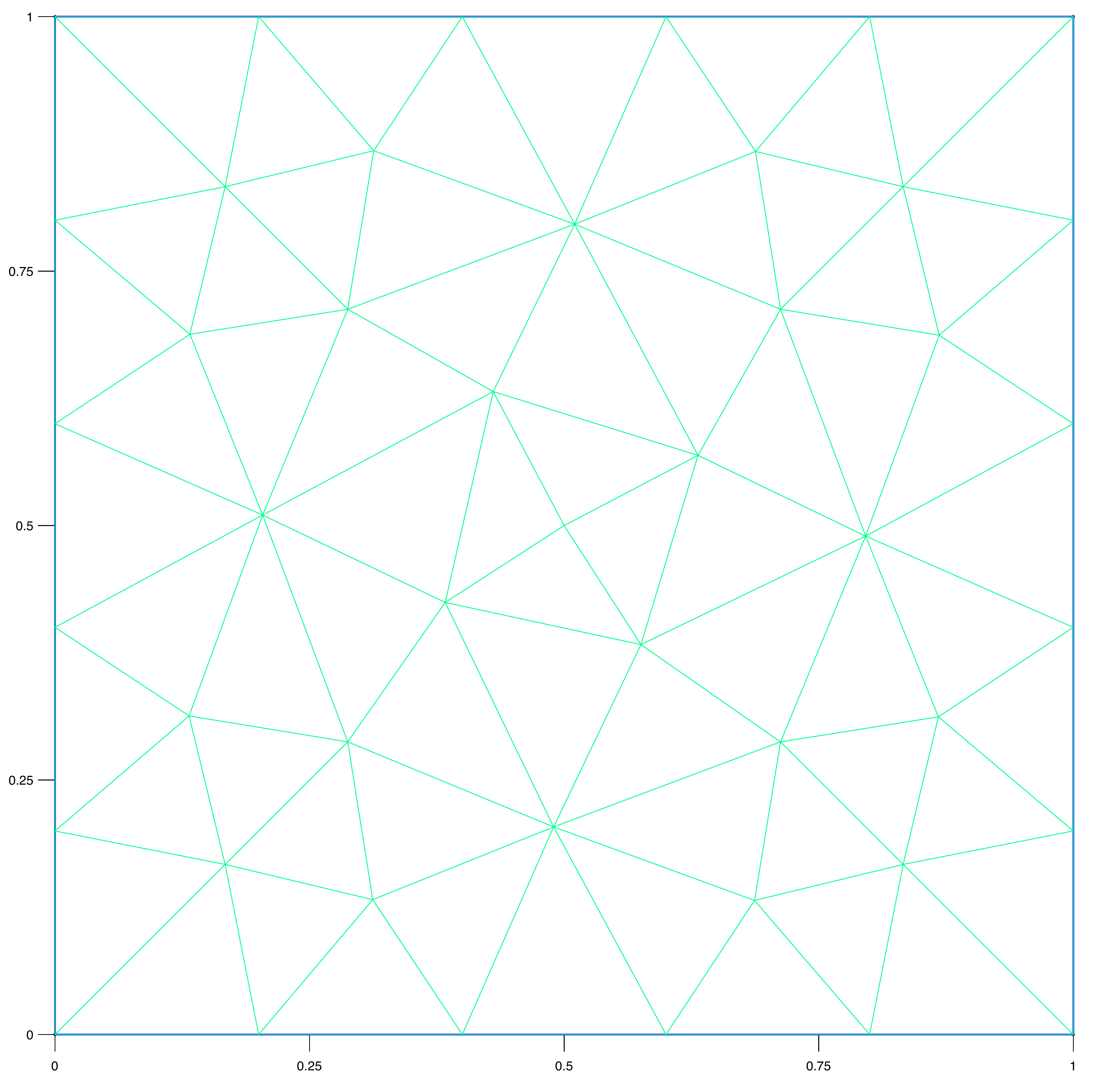}
\end{minipage}
}
\subfigure[~]{
\begin{minipage}{5.5cm}
\centering
\includegraphics[width = 4.5cm,height =4cm]{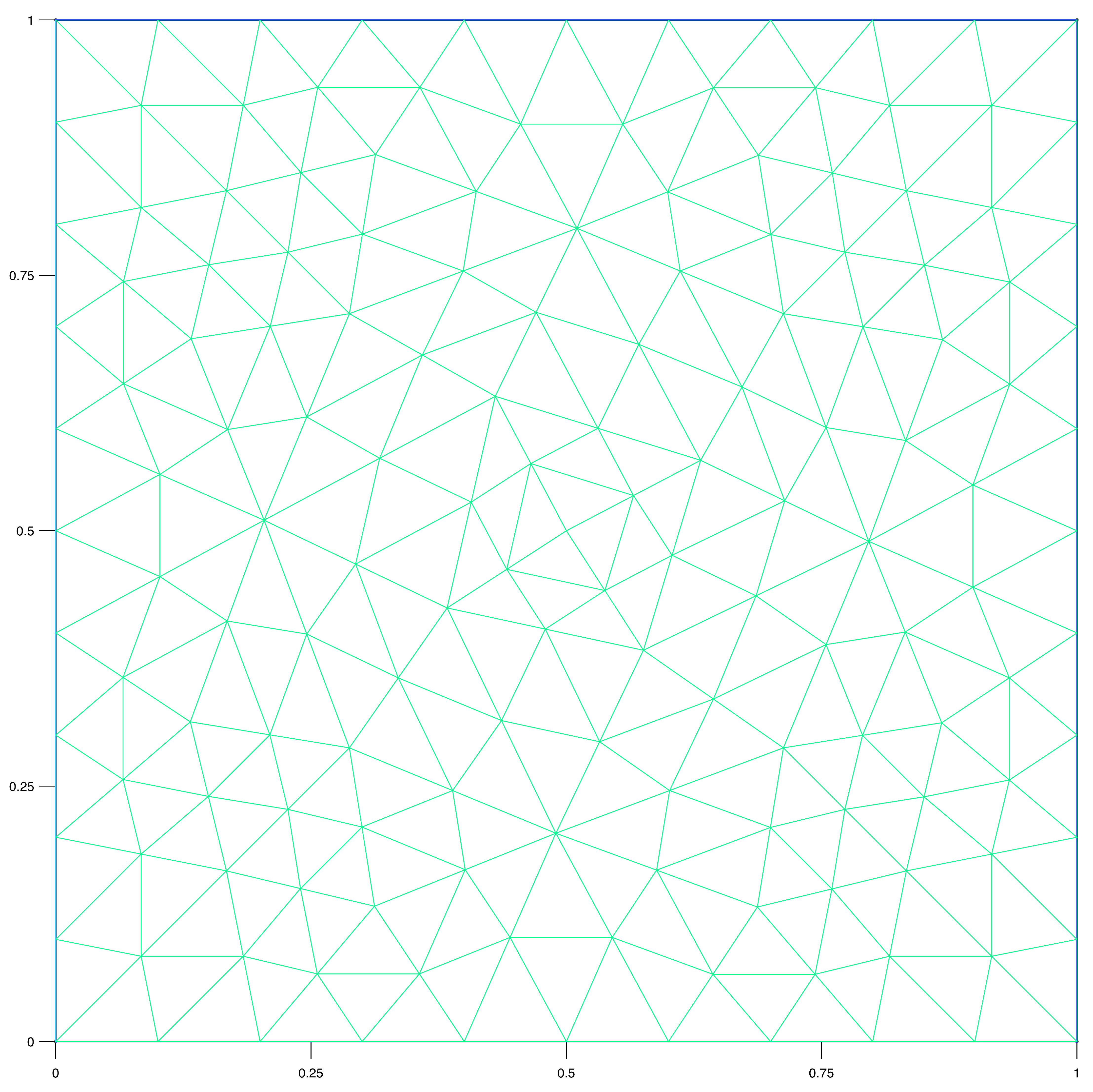}
\end{minipage}
}
\caption{Domain partitions used in Test Case \ref{ex:02}: (a) h=1/5, (b) h=1/10}
\label{fig01}
\end{figure}

\begin{table}[!th]
  \centering
 \caption{ Test Case \ref{ex:02}: performance of the element $([P_1]^2,[P_0]^2,[P_1]^{2\times2},P_0,P_0)$, $\gamma=1$, and $\mu=0$ (i.e.;  no stabilizer $s_2$) }
 \label{TT05}
  \small
  \begin{tabular}{ccccccc}
  \hline\noalign{\smallskip}
  \multicolumn{1}{c}{h}&\multicolumn{2}{c}{$\3bar e_h^\mathbf{u}\3bar $}
    &\multicolumn{2}{c}{$||e^\mathbf{u}_h||$}
      &\multicolumn{2}{c}{$||e_h^p||$}\\
\cline{2-3}\cline{4-5}\cline{6-7}\noalign{\smallskip}
 &error  &order &error  &order &error  &order\\
  \hline
 1/10  &4.6024e-02  &~~~  &1.5262e-03   &~~~   &1.7364e-02   &~~~   \\
 1/20  &2.3941e-02 &0.94 &4.2122e-04   &1.86   &7.4176e-03   &1.23   \\
 1/40  &1.2212e-02 &0.97 &1.1156e-04    &1.92  &2.9885e-03   &1.31  \\
 1/80  &6.1590e-03 &0.99 &2.8710e-05    &1.96  &1.1600e-03  &1.37  \\
\hline
  \end{tabular}
\end{table}

\begin{table}[!th]
  \centering
\caption{Test Case \ref{ex:02}: performance of the element $([P_2]^2,[P_1]^2,[P_1]^{2\times2},P_0,P_0)$, $\gamma=1$, and $\mu=0$ (i.e.; no stabilizer $s_2$)}
 \label{TT05.1}
  \small
  \begin{tabular}{ccccccc}
  \hline\noalign{\smallskip}
  \multicolumn{1}{c}{h}&\multicolumn{2}{c}{$\3bar e_h^\mathbf{u}\3bar $}
    &\multicolumn{2}{c}{$||e^\mathbf{u}_h||$}
      &\multicolumn{2}{c}{$||e_h^p||$}\\
\cline{2-3}\cline{4-5}\cline{6-7}\noalign{\smallskip}
 &error  &order &error  &order &error  &order\\
  \hline
 1/10  &3.7025e-02  &~~~  &9.6305e-04   &~~~   &5.3891e-02   &~~~   \\
 1/20  &1.8746e-02 &0.98 &2.4911e-04   &1.95   &2.7228e-02   &0.98   \\
 1/40  &9.4147e-03 &0.99 &6.3110e-05    &1.98  &1.3695e-02   &0.99  \\
 1/80  &4.7136e-03 &1.00 &1.5848e-05    &1.99  &6.8706e-03  &1.00  \\
\hline
  \end{tabular}
\end{table}

\begin{table}[!th]
  \centering
 \caption{Test Case \ref{ex:02}: performance of the element $([P_2]^2,[P_1]^2,[P_1]^{2\times2},P_1,P_1)$, $\gamma=1$, and $\mu=0$ (i.e.;  no stabilizer $s_2$) }
 \label{TT06}
  \small
  \begin{tabular}{ccccccc}
  \hline\noalign{\smallskip}
  \multicolumn{1}{c}{h}&\multicolumn{2}{c}{$\3bar e_h^\mathbf{u}\3bar $}
    &\multicolumn{2}{c}{$||e^\mathbf{u}_h||$}
      &\multicolumn{2}{c}{$||e_h^p||$}\\
\cline{2-3}\cline{4-5}\cline{6-7}\noalign{\smallskip}
 &error  &order &error  &order &error  &order\\
  \hline
 1/10  &3.4489e-03  &~~~  &1.0439e-04   &~~~   &9.2655e-04  &~~~    \\
 1/20  &8.6811e-04 &1.99 &1.3100e-05   &2.99   &2.3812e-04   &1.96   \\
 1/40  &2.1773e-04 &2.00 &1.6406e-06    &3.00  &6.0441e-05   &1.98\\
 1/80  &5.4513e-05 &2.00 &2.0528e-07    &3.00 &1.5235e-05   &1.99  \\
\hline
  \end{tabular}
\end{table}

\begin{table}[!th]
  \centering
 \caption{Test Case \ref{ex:02}: performance of the element $([P_2]^2,[P_1]^2,[P_0]^{2\times2},P_1,P_1)$, $\gamma=1$, and $\mu=0$ (i.e.; no stabilizer $s_2$) }
 \label{TT07}
  \small
  \begin{tabular}{ccccccc}
  \hline\noalign{\smallskip}
  \multicolumn{1}{c}{h}&\multicolumn{2}{c}{$\3bar e_h^\mathbf{u}\3bar $}
    &\multicolumn{2}{c}{$||e^\mathbf{u}_h||$}
      &\multicolumn{2}{c}{$||e_h^p||$}\\
\cline{2-3}\cline{4-5}\cline{6-7}\noalign{\smallskip}
 &error  &order &error  &order &error  &order\\
  \hline
 1/10  &3.9530e-02  &~~~  &5.9917e-04   &~~~   &2.0136e-02  &~~~    \\
 1/20  &1.9784e-02 &1.00 &1.4376e-04   &2.06   &9.8329e-03   &1.03   \\
 1/40  &9.9032e-03 &1.00 &3.5542e-05    &2.02  &4.8647e-03   &1.02 \\
 1/80  &4.9538e-03 &1.00 &8.8588e-06    &2.00 &2.4230e-03   &1.01  \\
\hline
  \end{tabular}
\end{table}

\begin{table}[!th]
  \centering
 \caption{Test Case \ref{ex:02}: performance of the element $([P_2]^2,[P_1]^2,[P_0]^{2\times2},P_2,P_2)$, $\gamma=1$, $\beta=-1$, and $\mu=1$.}
 \label{TT08}
  \small
  \begin{tabular}{ccccccc}
  \hline\noalign{\smallskip}
  \multicolumn{1}{c}{h}&\multicolumn{2}{c}{$\3bar e_h^\mathbf{u}\3bar $}
    &\multicolumn{2}{c}{$||e^\mathbf{u}_h||$}
      &\multicolumn{2}{c}{$||e_h^p||$}\\
\cline{2-3}\cline{4-5}\cline{6-7}\noalign{\smallskip}
 &error  &order &error  &order &error  &order\\
  \hline
 1/10  &5.4373e-02  &~~~  &1.0275e-03   &~~~  &1.8224e-02  &~~~    \\
 1/20  &2.7574e-02 &0.98 &2.5260e-04   &2.02   &6.1178e-03   &1.57   \\
 1/40  &1.3875e-02 &0.99 &6.2548e-05    &2.01  &2.0661e-03   &1.57 \\
 1/80  &6.9568e-03 &1.00 &1.5551e-05    &2.01  &7.1879e-04   &1.52  \\
\hline
  \end{tabular}
\end{table}

\subsection{Test Case 3: lid-driven cavity}\label{ex:03}
Consider the lid-driven cavity problem with incompressible fluid flow on the unit square $\Omega=(0,1)^2$ described as in \cite{Zienkiewicz2014}. This test problem assumes a slip boundary on the top of the cavity $\Gamma_1=\{(x, y) | y=1\}$ which moves with velocity $\mathbf{u}=[1;0]$. No-slip conditions are assumed on the rest of the boundary $\Gamma_2=\partial \Omega\setminus\Gamma_1$. The pressure constraint is $p=0$ at the point $(0,0)$.

Fig. \ref{fig02} and Fig. \ref{fig03} depicts the numerical approximation of the lid-driven cavity problem when the element $([P_2]^2,[P_1]^2,[P_0]^{2\times2},P_1,P_1)$ is employed with $\gamma=1$ and $\mu=0$.  The plots are based on a uniform triangulation of the domain with meshsize $h=1/16$. Fig. \ref{fig02} shows the velocity vector and the pressure distribution where the presures is scaled by $p_h:=p_h-\int_{\Omega}p_hd\Omega$ .
Fig. \ref{fig03} illustrates the contour plot for the two velocity components.

\begin{figure}[!th]
\centering
\subfigure[~]{
\begin{minipage}{5.5cm}
\centering
\includegraphics[width = 5.0cm, height =4.5cm]{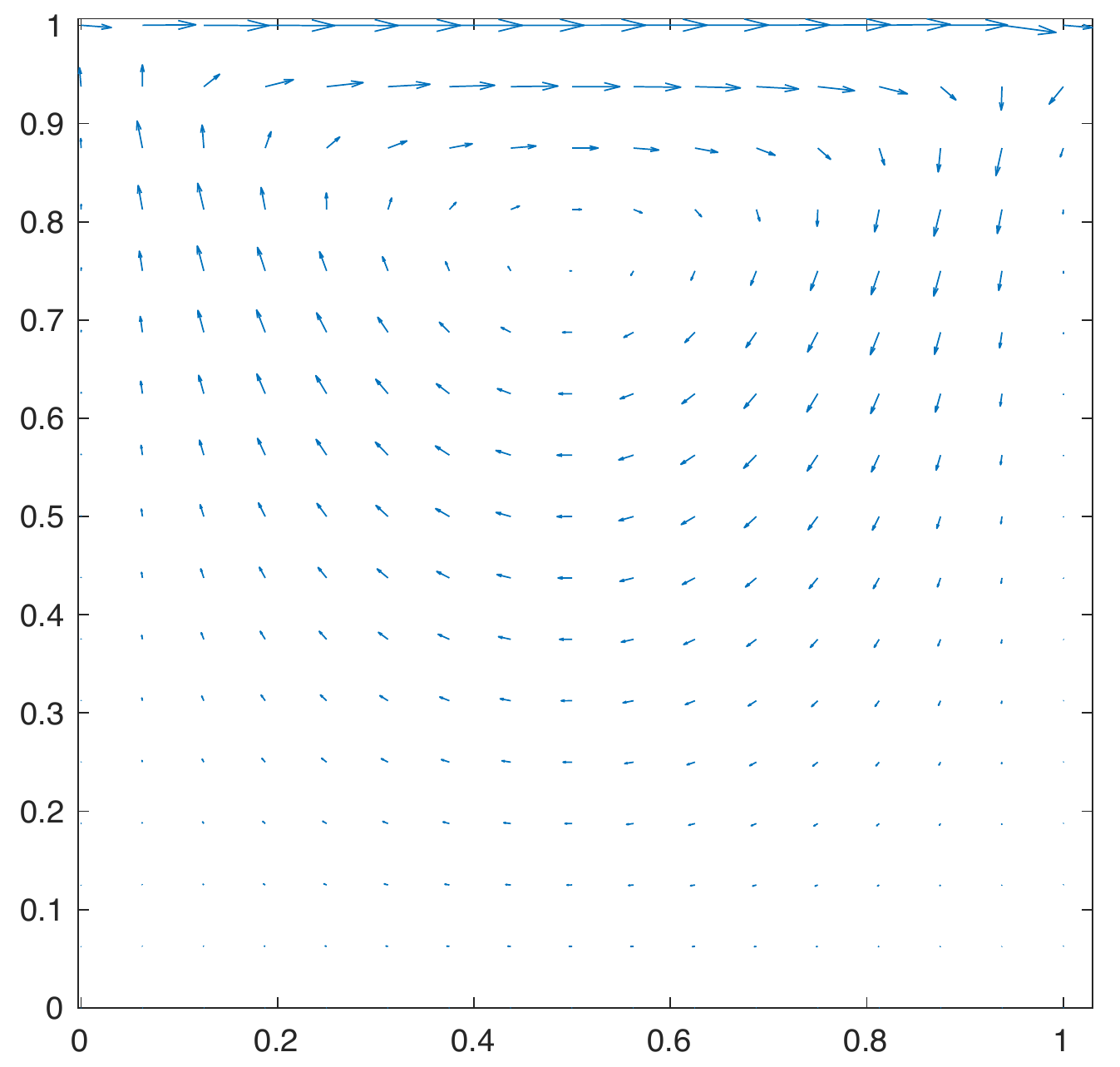}
\end{minipage}
}
\subfigure[~]{
\begin{minipage}{5.5cm}
\centering
\includegraphics[width = 5.0cm, height =4.5cm]{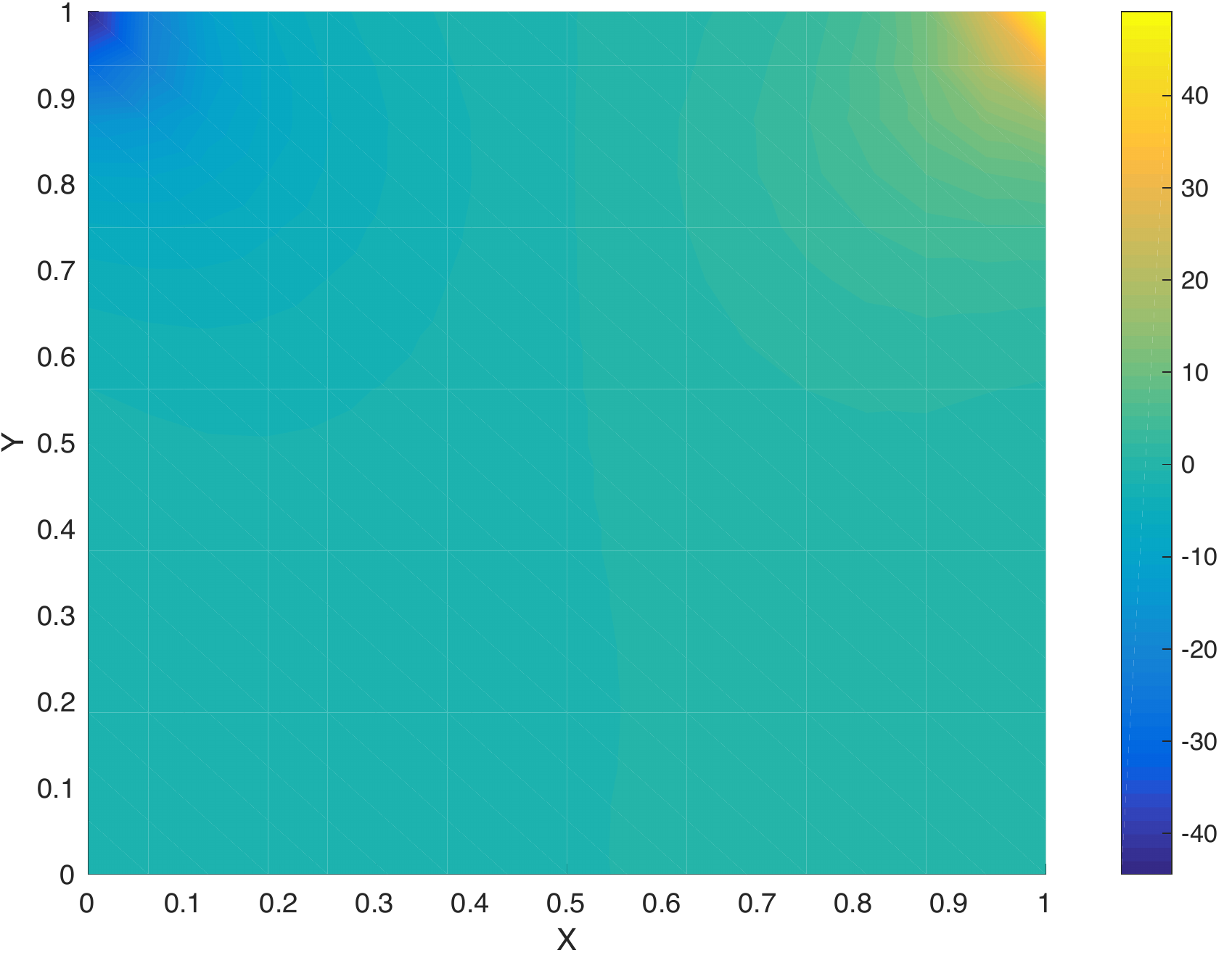}
\end{minipage}
}
\caption{Lid-driven cavity: The numerical solutions with $([P_2]^2,[P_1]^2,[P_0]^{2\times2},P_1,P_1)$: (a) velocity field, (b) pressure.}
\label{fig02}
\end{figure}

\begin{figure}[!th]
\centering
\begin{minipage}{12cm}
\centering
\includegraphics[width = 12cm, height =4.5cm]{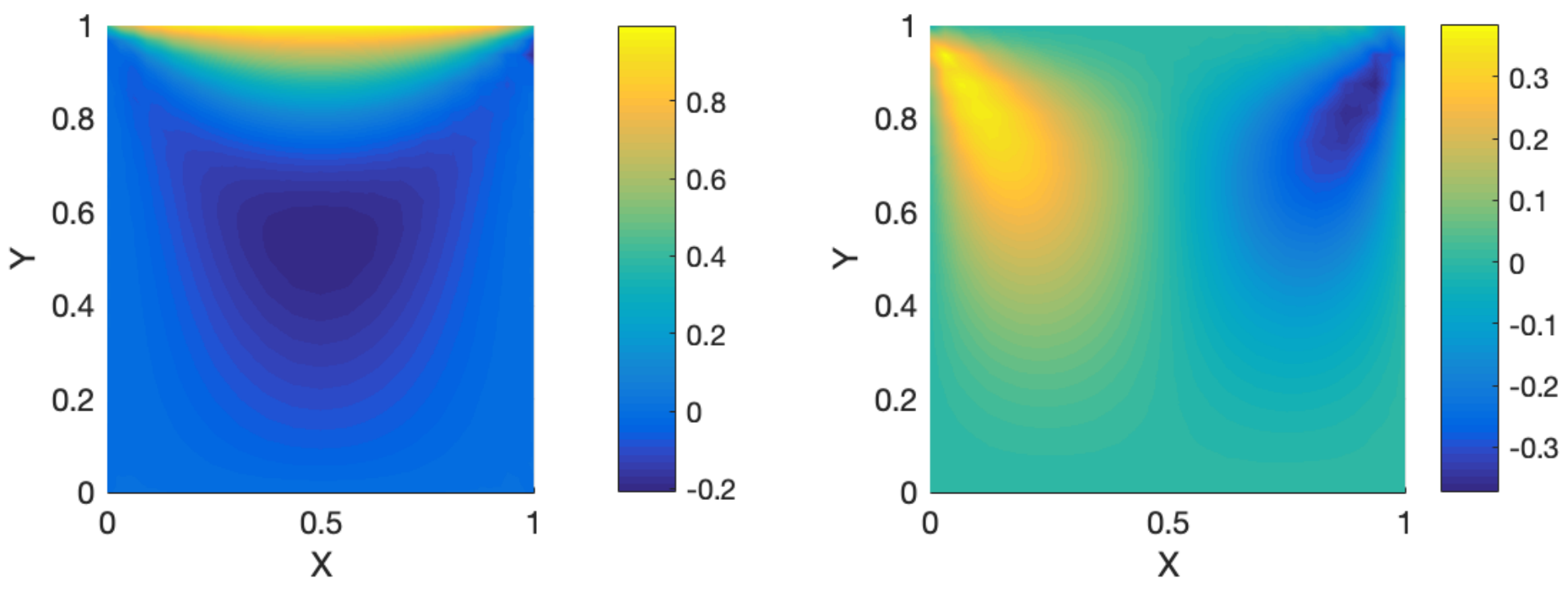}
\end{minipage}
\caption{Lid-driven cavity: Contour plots of the velocity with the element $([P_2]^2,[P_1]^2,[P_0]^{2\times2},P_1,P_1)$:  $ \mathbf{u}_1$(left);  $\mathbf{u}_2$(right).}
\label{fig03}
\end{figure}

This lid-driven cavity problem for incompressible fluid fluid was also approximated by using the element $([P_2]^2,[P_1]^2,[P_1]^{2\times2},P_1,P_1)$ with $\gamma=1$ and $\mu=0$.  The resulting numerical solutions are illustrated in  Fig. \ref{fig04} and Fig. \ref{fig05}. A comparison of the two computations for the lid-driven problem indicates that the WG-variation of the Taylor-Hood element $([P_2]^2,[P_1]^2,[P_1]^{2\times2},P_1,P_1)$ seems to provide numerical solutions with shaper resolution.

\begin{figure}[!th]
\centering
\subfigure[~]{
\begin{minipage}{5.5cm}
\centering
\includegraphics[width = 5.0cm, height =4.5cm]{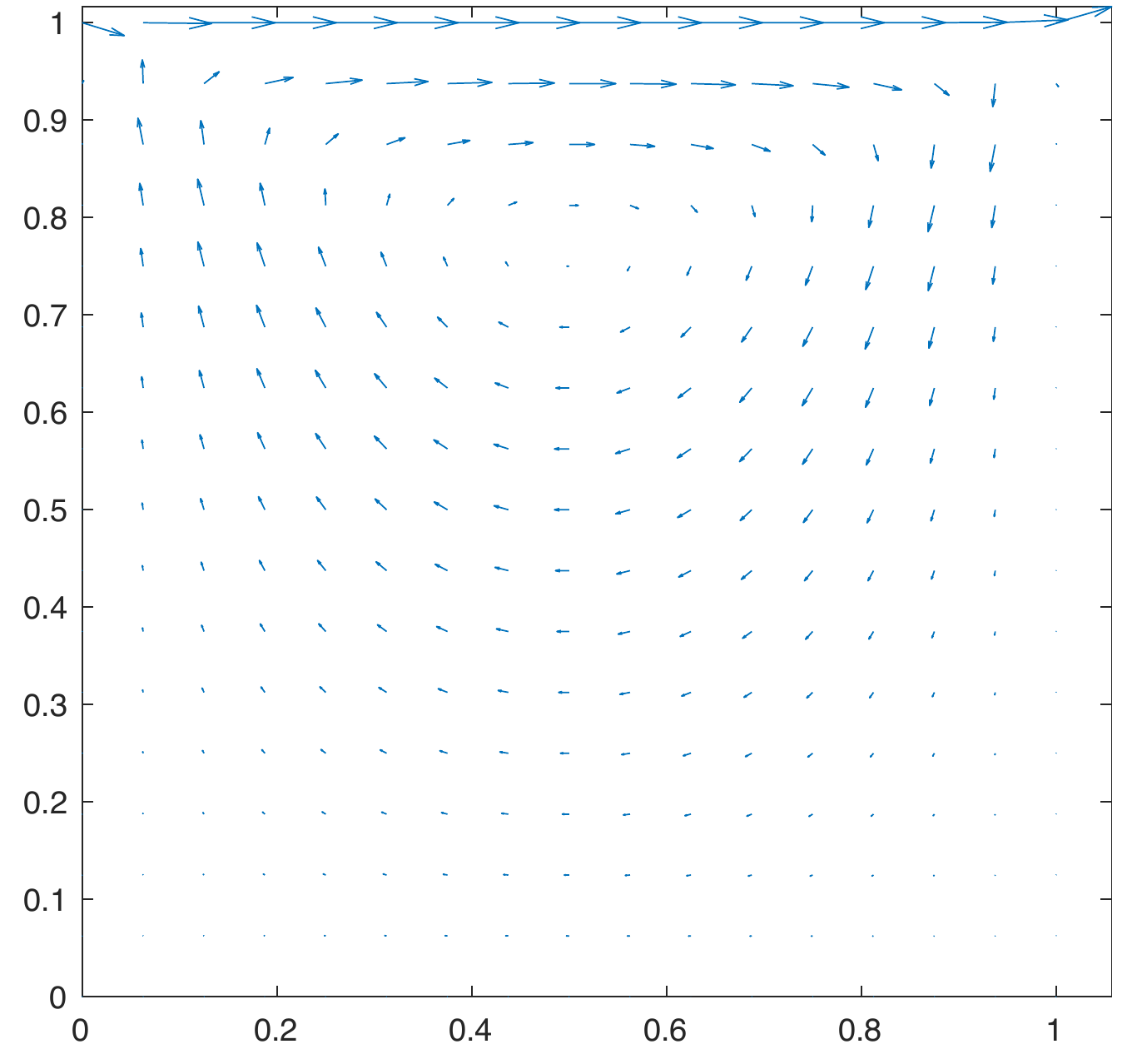}
\end{minipage}
}
\subfigure[~]{
\begin{minipage}{5.5cm}
\centering
\includegraphics[width = 5.0cm, height =4.5cm]{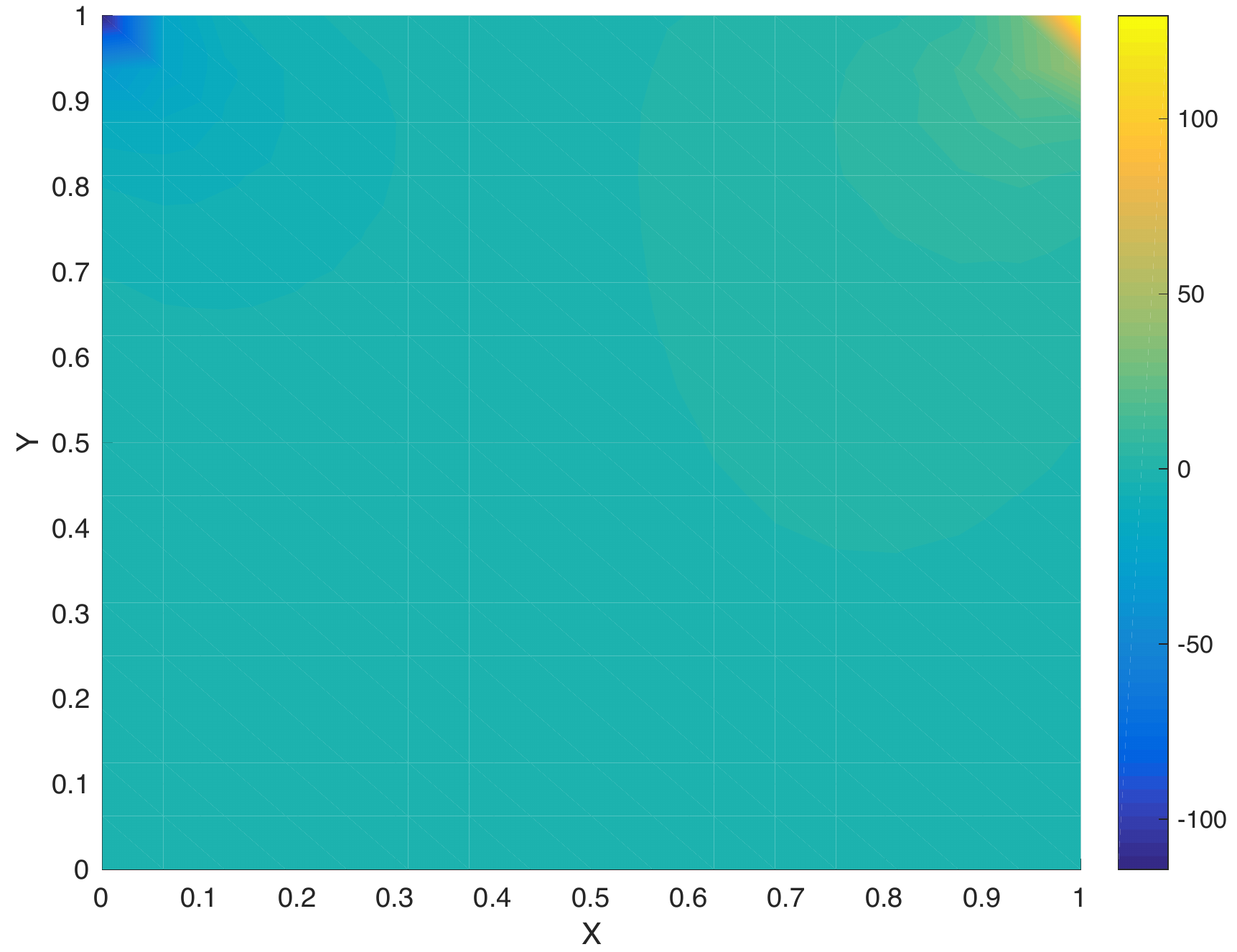}
\end{minipage}
}
\caption{Lid-driven cavity by using the element $([P_2]^2,[P_1]^2,[P_1]^{2\times2},P_1,P_1)$: (a) velocity field, (b) pressure.}
\label{fig04}
\end{figure}

\begin{figure}[!th]
\centering
\begin{minipage}{12cm}
\centering
\includegraphics[width = 12cm, height =4.5cm]{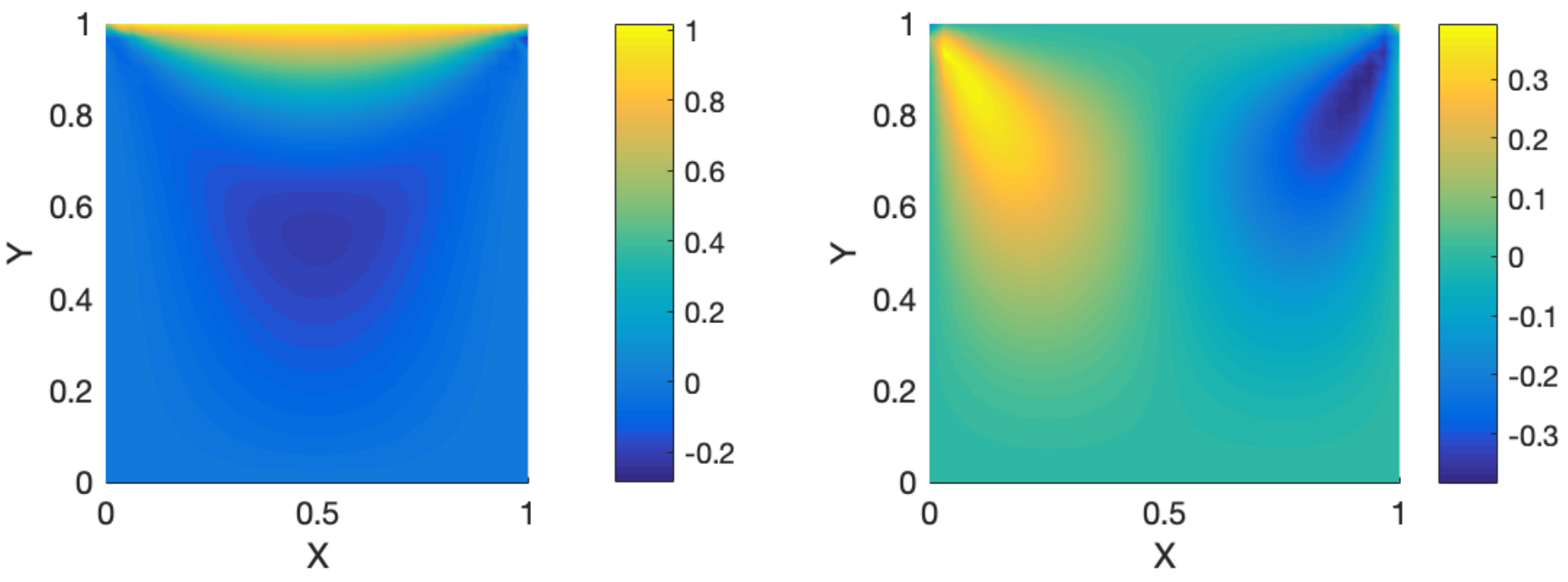}
\end{minipage}
\caption{Lid-driven cavity: Contour plots of the velocity with the element $([P_2]^2,[P_1]^2,[P_1]^{2\times2},P_1,P_1)$:  $ \mathbf{u}_1$(left);  $\mathbf{u}_2$(right).}
\label{fig05}
\end{figure}

\subsection{Test Case 4:  transient flow passing circular objects}\label{ex:04}
We consider the transient flow passing a circular object in two dimensions.
The domain is given as $\Omega=\Omega_1\setminus\Omega_2$, where $\Omega_1=(0, 8)\times(0, 4.5)$ and
$\Omega_2=\{(x, y)| (x-2.25)^2+(y-2.25)^2<0.25^2\}$. No-slip boundary condition is enfored on the inner circle boundary $\Gamma_2=\partial\Omega_2$. The outer boundary condition is $\mathbf{u}=[1;0]$ on $\Gamma_1=\partial\Omega_1$; i.e.,  the velocity in horizontal direction is unit and zero in vertical direction on the slip boundary.

For the problem of transient flow passing a circular object, numerical approximations were obtained by using triangular partitions showed as in Fig. \ref{fig06}. Observe that the elements near the inner circle boundary have smaller size than those near the outer boundary in order to capture the separation movement.
The element $([P_2]^2,[P_1]^2,[P_1]^{2\times2},P_1,P_1)$ was employed in the numerical computation with $\mu=0$.
Fig. \ref{fig07} and Fig. \ref{fig08} show the numerical results on the velocity and pressure.

\begin{figure}[!th]
\centering
\begin{minipage}{8.5cm}
\centering
\includegraphics[width = 8cm, height =4.5cm]{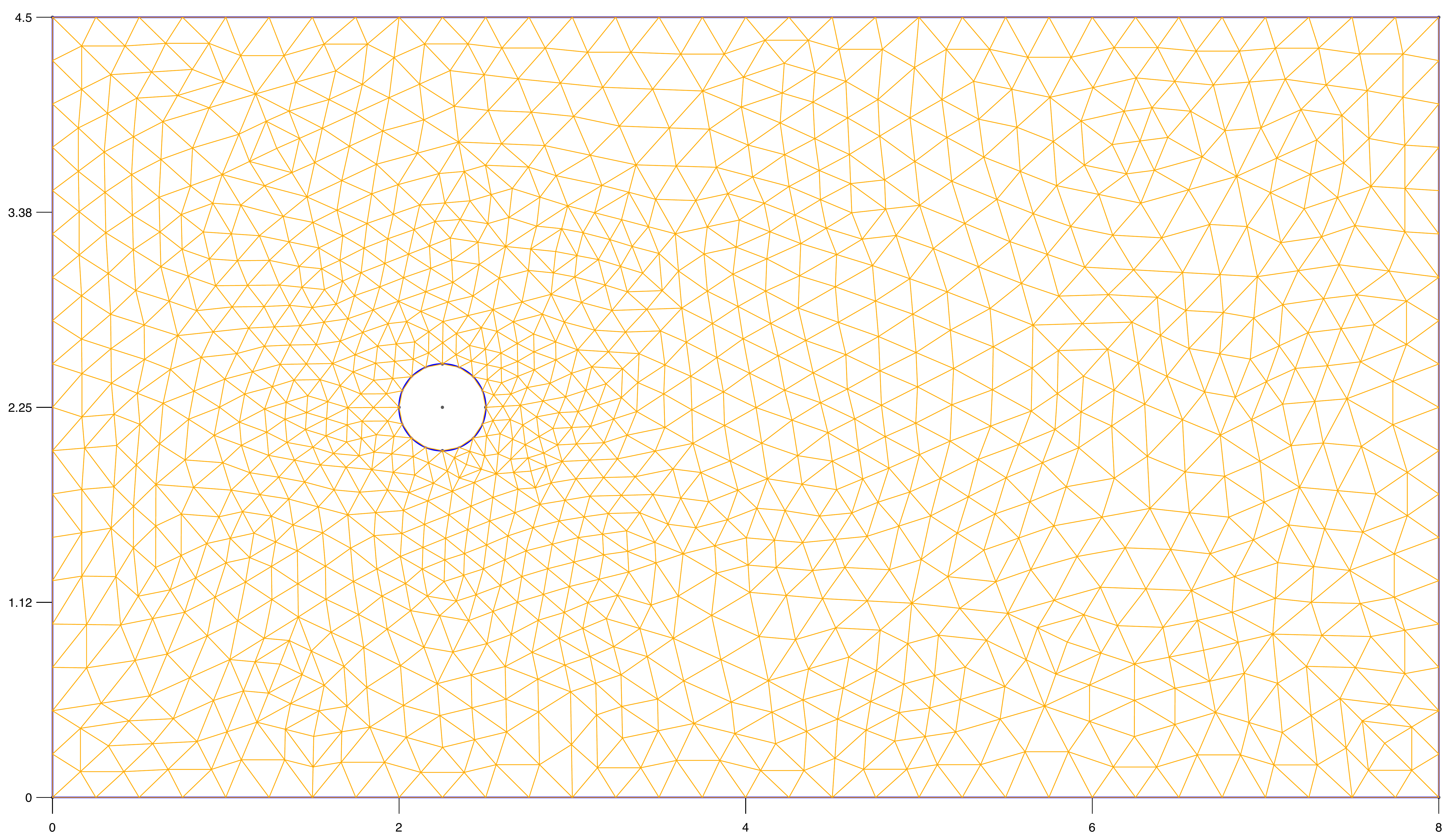}
\end{minipage}
\caption{Illustration of domain partitions for the problem of transient flow.}
\label{fig06}
\end{figure}

\begin{figure}[!th]
\centering
\subfigure[~]{
\begin{minipage}{8.5cm}
\centering
\includegraphics[width = 8cm, height =4.5cm]{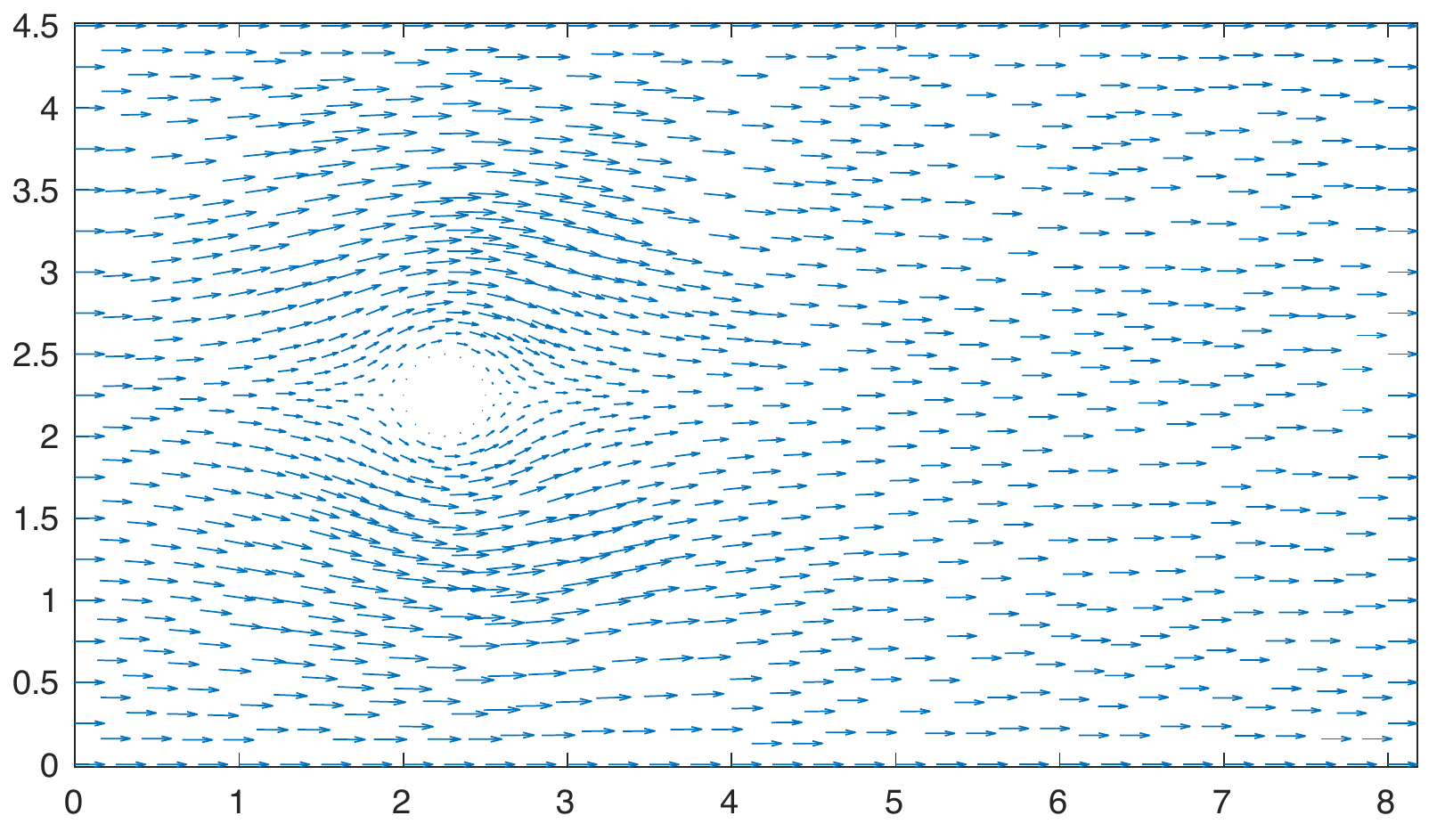}
\end{minipage}
}
\subfigure[~]{
\begin{minipage}{8.5cm}
\centering
\includegraphics[width = 8cm, height =4.5cm]{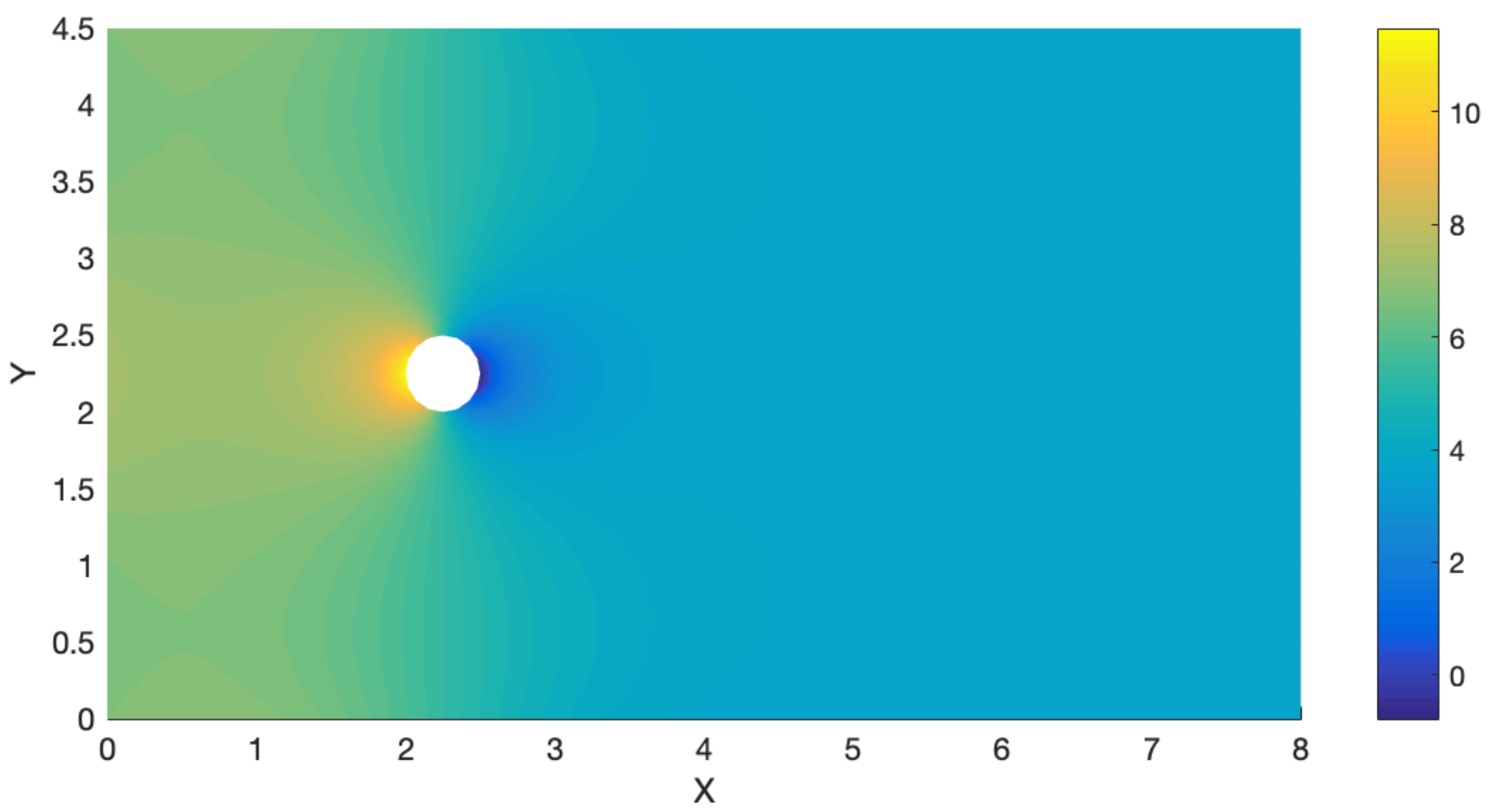}
\end{minipage}
}
\caption{Numerical approximation of transient flow using the element $([P_2]^2,[P_1]^2,[P_1]^{2\times2},P_1,P_1)$: (a) velocity field, (b) pressure profile.}
\label{fig07}
\end{figure}

\begin{figure}[!th]
\centering
\begin{minipage}{8.5cm}
\centering
\includegraphics[width = 8cm, height =9cm]{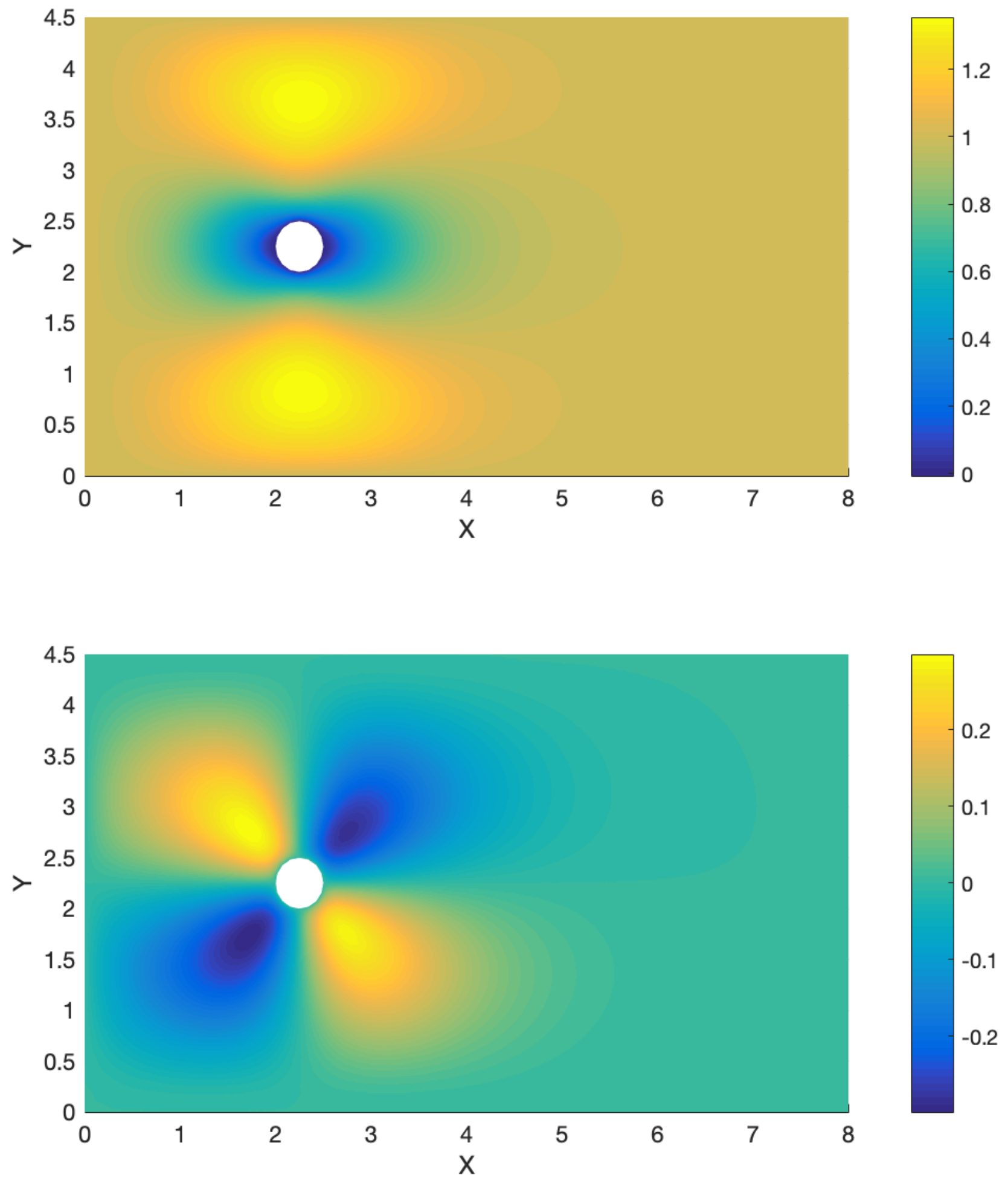}
\end{minipage}
\caption{Transient flow:  the velocity profile from the element $([P_2]^2,[P_1]^2,[P_1]^{2\times2},P_1,P_1)$:  $ \mathbf{u}_1$  (up);  $ \mathbf{u}_2$ (down).}
\label{fig08}
\end{figure}

Our next transient flow problem involves three circular objects in passing. The domain is again set as $\Omega=\Omega_1\setminus\Omega_2$ where $\Omega_1=(0, 8)\times (0, 4.5)$, and the inner domain $\Omega_2$ is composed with three circles $\Omega_2=\Omega_3\cup\Omega_4\cup\Omega_5$:
\begin{equation*}
\begin{aligned}
\Omega_3=\{(x, y)|(x-2.25)^2+(y-2.25)^2&<0.25^2\},\\
\Omega_4=\{(x, y)|(x-4)^2+(y-1.25)^2&<0.25^2\},\\
\Omega_5=\{(x, y)|(x-4)^2+(y-3.25)^2&<0.25^2\}.
\end{aligned}
\end{equation*}
No-slip boundary conditions are imposed on the inner boundary $\Gamma_2=\partial \Omega_2$ and the Dirichlet boundary condition is given on the outer boundary $\Gamma_1=\partial \Omega_1$ as $\mathbf{u}=[1; 0]$.
Numerical approximations are obtained by using the element $([P_2]^2,[P_1]^2,[P_1]^{2\times2},P_1,P_1)$ with the parameter $\mu=0$. The numerical solutions are shown in Fig. \ref{fig010} and Fig. \ref{fig011}.

\begin{figure}[!th]
\centering
\begin{minipage}{8.5cm}
\centering
\includegraphics[width = 8cm, height =4.5cm]{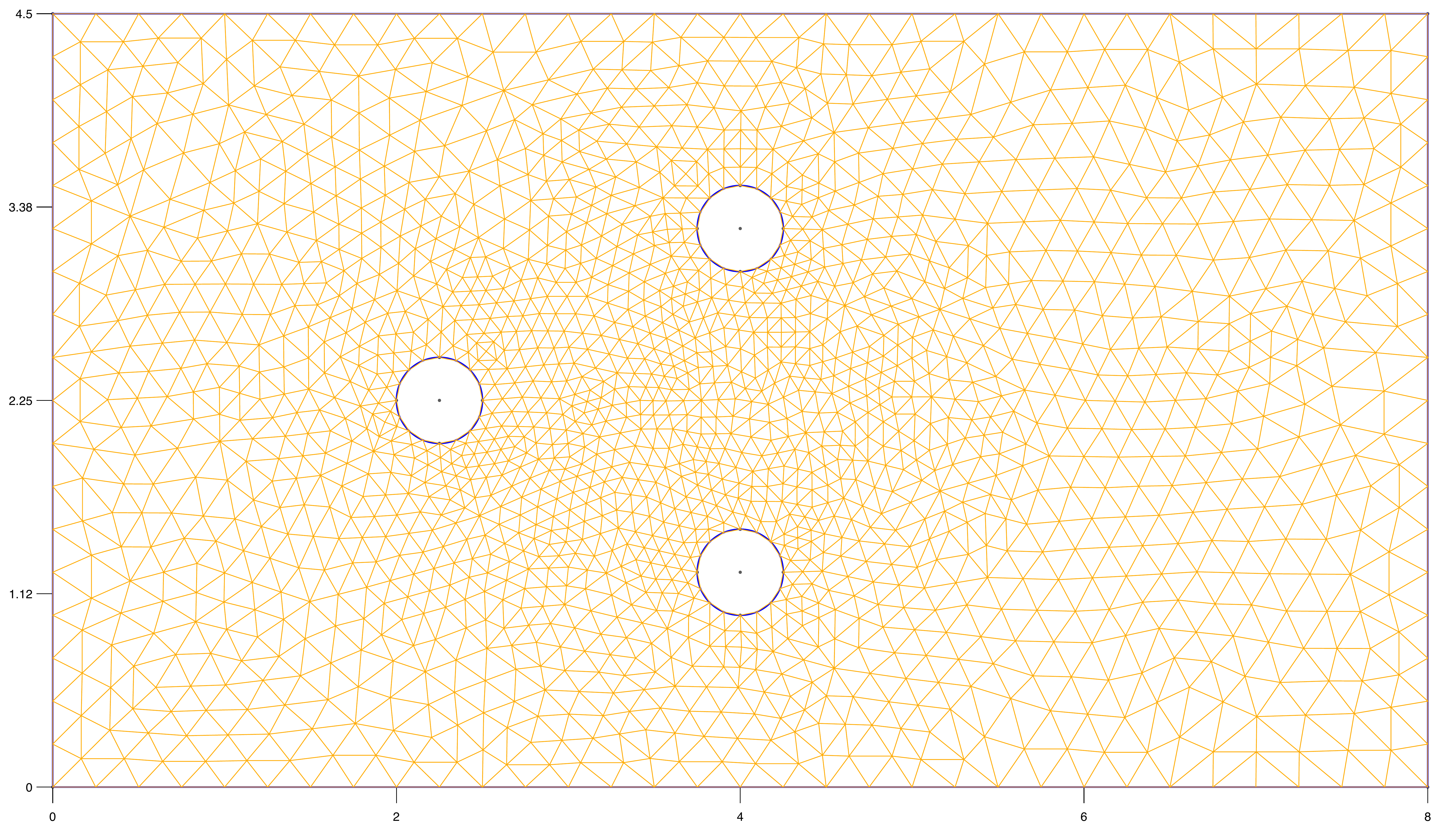}
\end{minipage}
\caption{Illustration of domain partitions for the problem of transient flow passing three circular objects.}
\label{fig09}
\end{figure}

\begin{figure}[!th]
\centering
\subfigure[~]{
\begin{minipage}{8.5cm}
\centering
\includegraphics[width = 8cm, height =4.5cm]{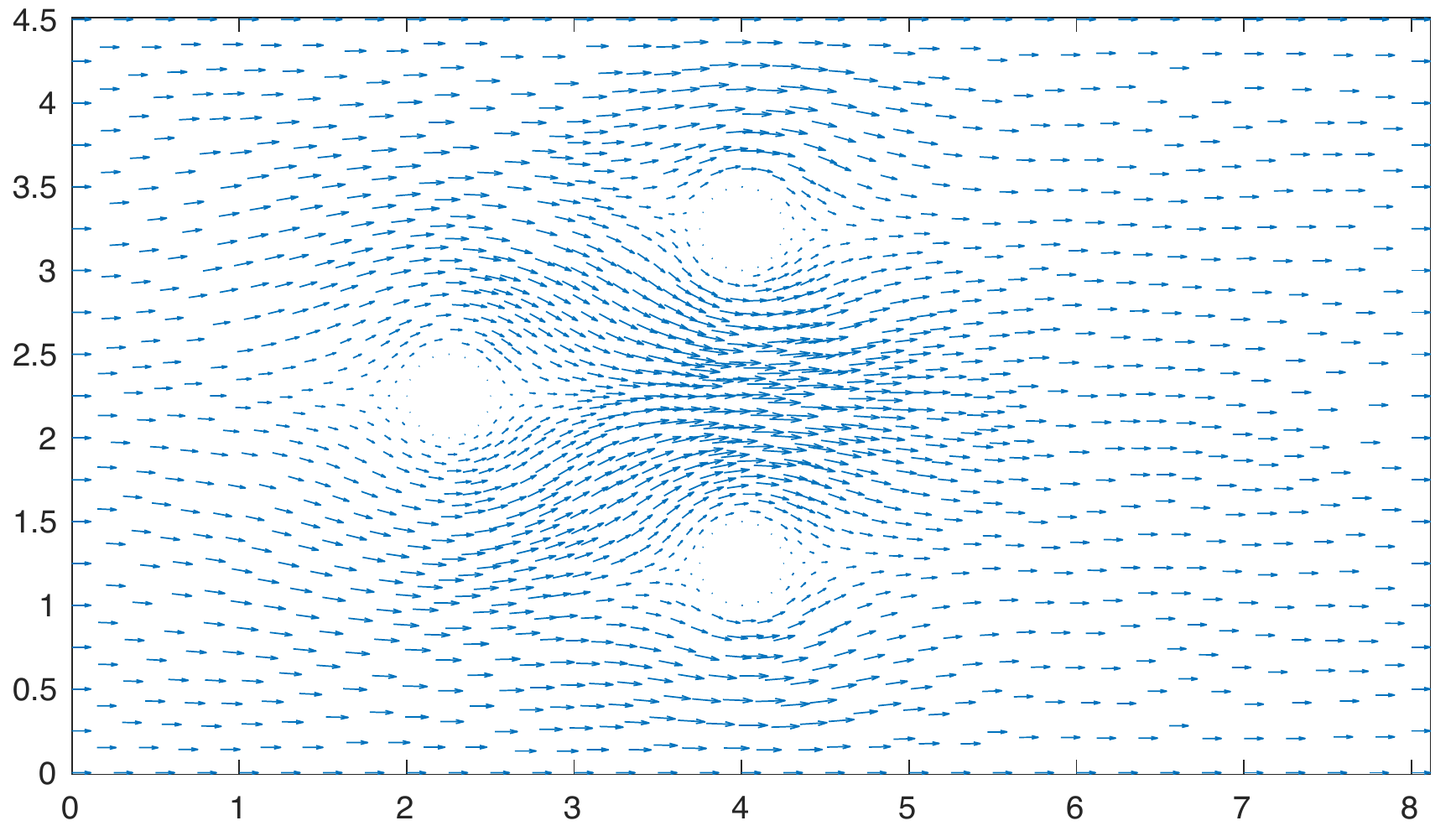}
\end{minipage}
}
\subfigure[~]{
\begin{minipage}{8.5cm}
\centering
\includegraphics[width = 8cm, height =4.5cm]{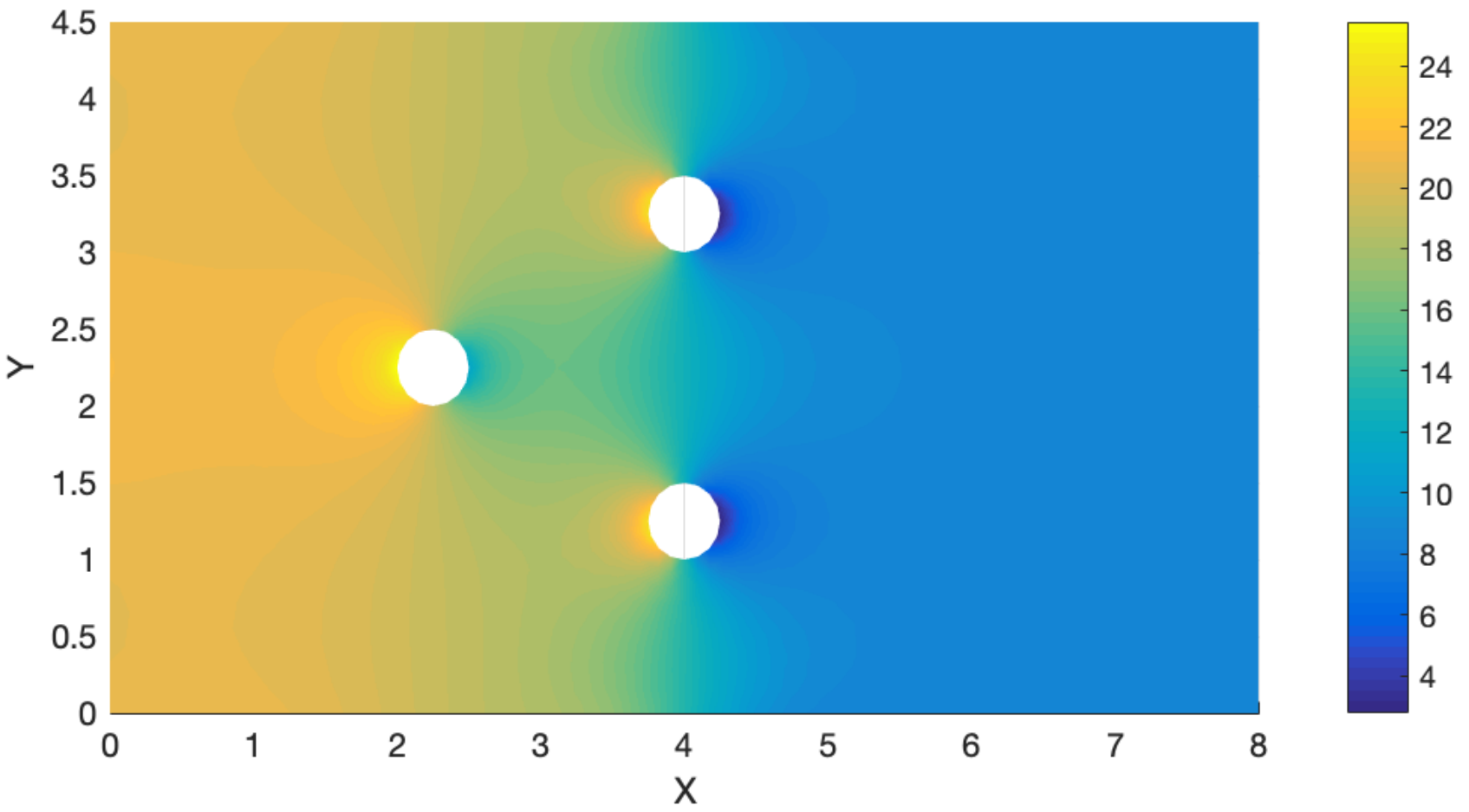}
\end{minipage}
}
\caption{Numerical approximation of a transient flow problem using the element $([P_2]^2,[P_1]^2,[P_1]^{2\times2},P_1,P_1)$: (a) velocity field, (b) pressure contour.}
\label{fig010}
\end{figure}

\begin{figure}[!th]
\centering
\begin{minipage}{8.5cm}
\centering
\includegraphics[width = 8cm, height =9cm]{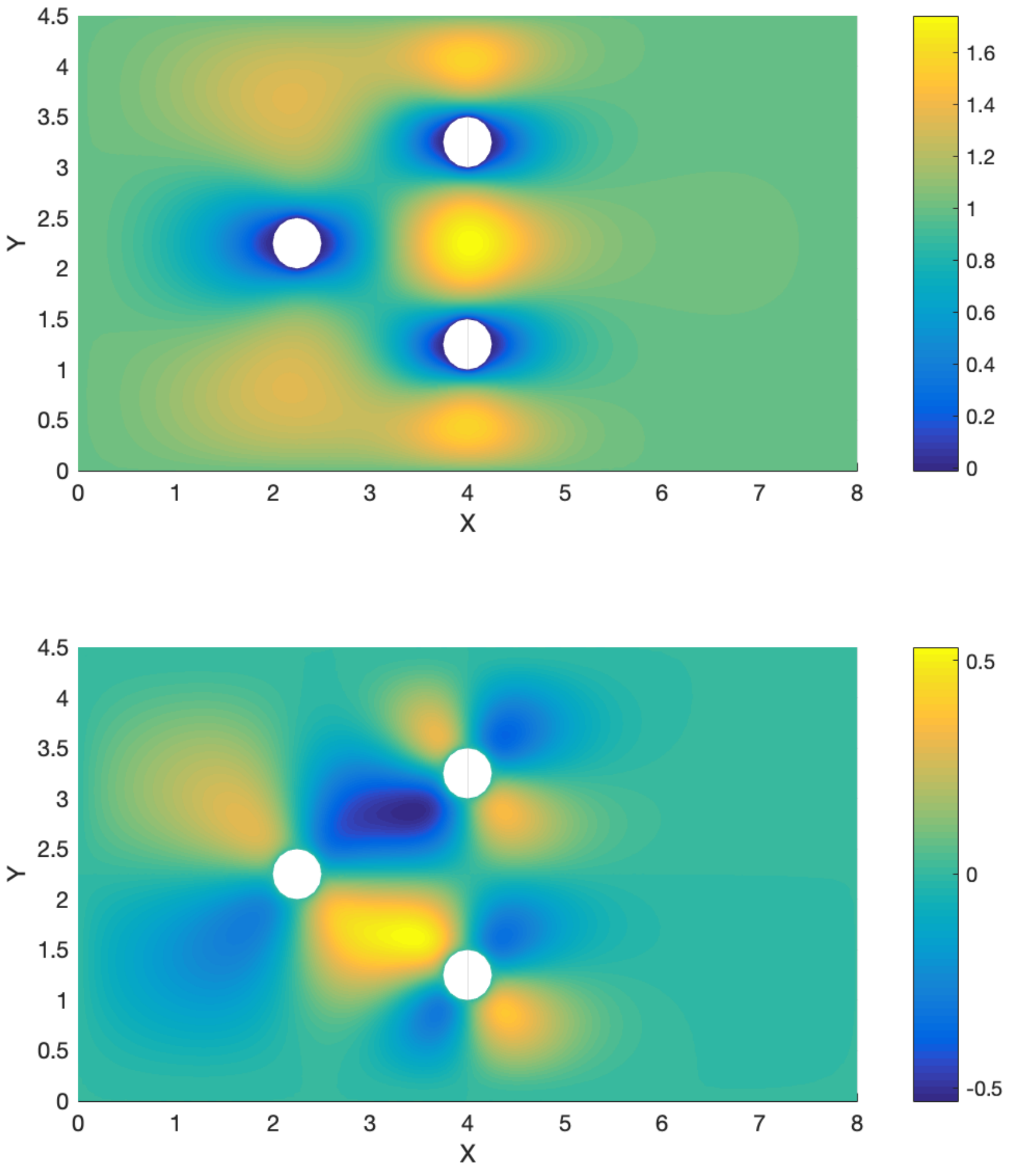}
\end{minipage}
\caption{Transient flow:  the velocity profile from the element $([P_2]^2,[P_1]^2,[P_1]^{2\times2},P_1,P_1)$:  $ \mathbf{u}_1$  (up);  $ \mathbf{u}_2$ (down).}
\label{fig011}
\end{figure}

As our last test case, we consider the transient flow of incompressible fluid passing three circular objects in two dimensions with the following boundary condition: (1) no-slip boundary condition on three circular objects, and (2) Dirichlet boundary condition on the outer boundary which is divided into three segments $\Gamma_1=\Gamma_3\cup\Gamma_4\cup\Gamma_5$:
\begin{eqnarray*}
\Gamma_3&=&\{(x,y)| x=0, 1.5\leq y\leq 2~\mbox{or}~2.5 \leq y\leq 3 \},\\
\Gamma_4&=&\{(x,y)| x=8, 0\leq y\leq 4.5 \},\\
\Gamma_5&=&\partial \Omega_1\setminus\Gamma_3\setminus \Gamma_4.
\end{eqnarray*}
The Dirichlet boundary value is given as $[\alpha; 0]$ on $\Gamma_3$, $[\alpha/4.5; 0]$ on $\Gamma_4$, and $[0; 0]$
on $\Gamma_5$, all with $\alpha=100$. Numerical solutions are obtained by using the element $([P_2]^2,[P_1]^2,[P_1]^{2\times2},P_1,P_1)$ on quasi-regular triangular partitions.  The solutions are depicted in Fig. \ref{fig016} and Fig. \ref{fig017}.

\begin{figure}[!th]
\centering
\subfigure[~]{
\begin{minipage}{8.5cm}
\centering
\includegraphics[width = 8cm, height =4.5cm]{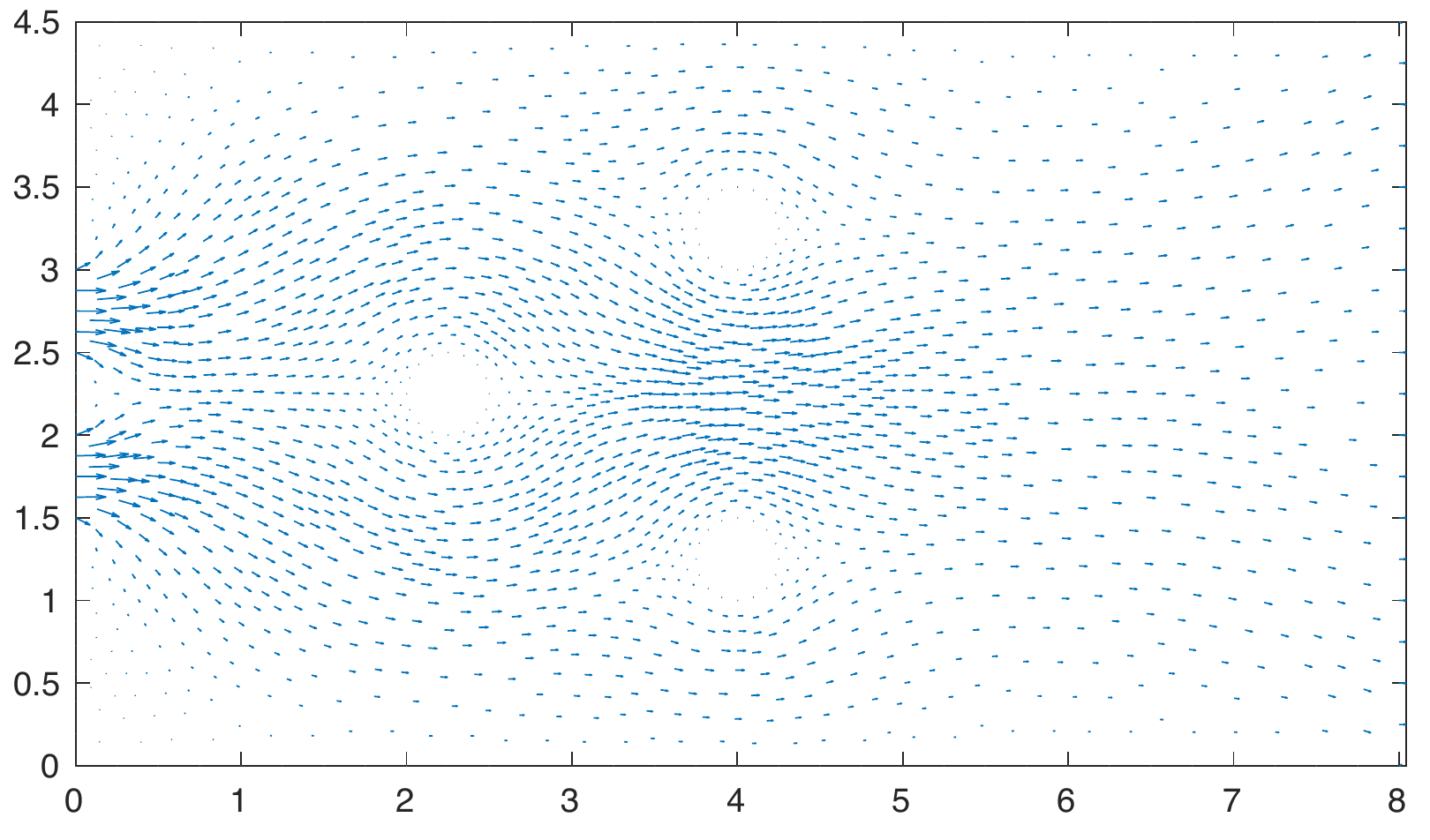}
\end{minipage}
}
\subfigure[~]{
\begin{minipage}{8.5cm}
\centering
\includegraphics[width = 8cm, height =4.5cm]{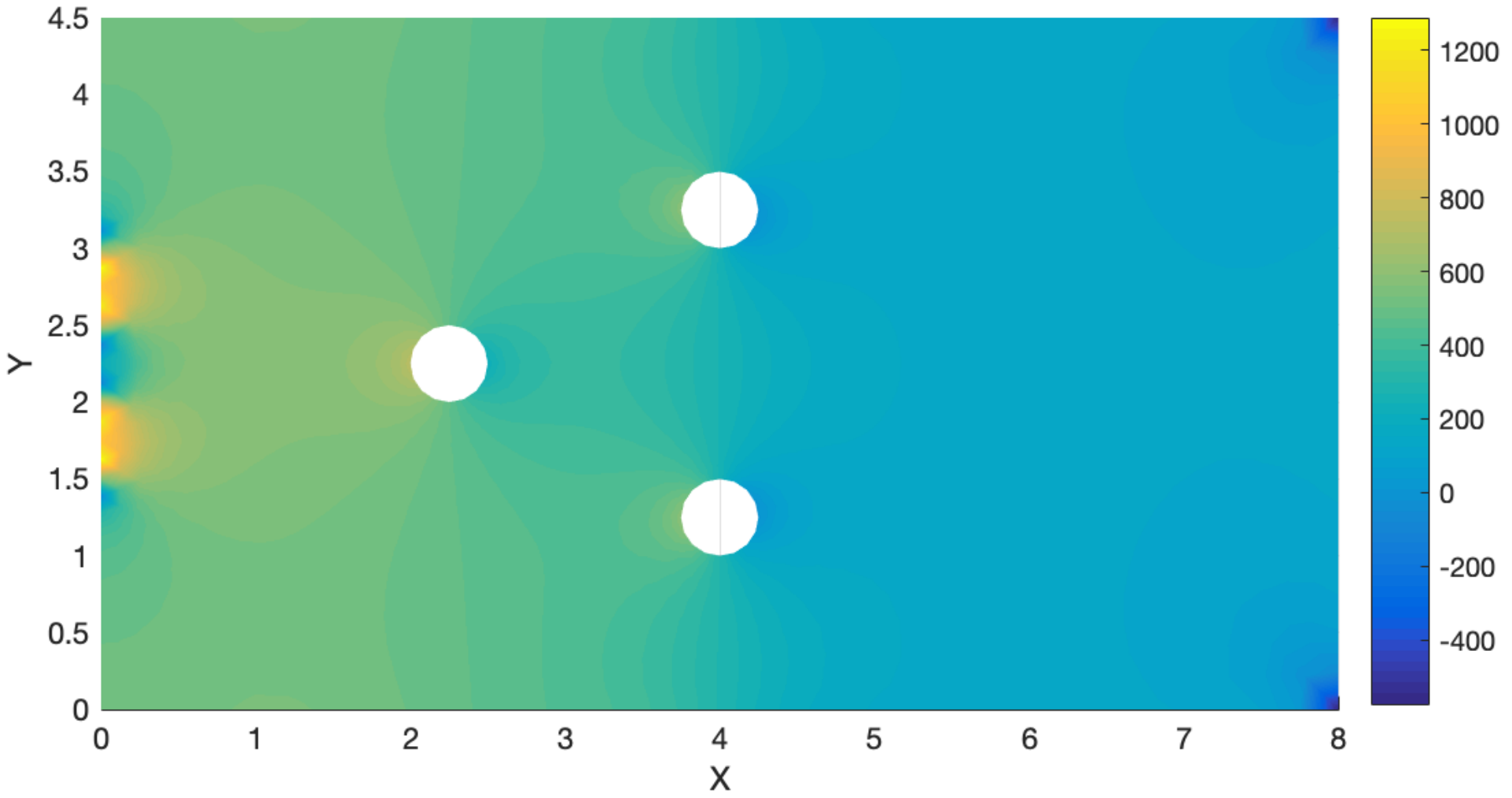}
\end{minipage}
}
\caption{Numerical approximation of a transient flow problem using the element $([P_2]^2,[P_1]^2,[P_1]^{2\times2},P_1,P_1)$: (a) velocity field, (b) pressure contour.}
\label{fig016}
\end{figure}

\begin{figure}[!th]
\centering
\begin{minipage}{8.5cm}
\centering
\includegraphics[width = 8cm, height =9cm]{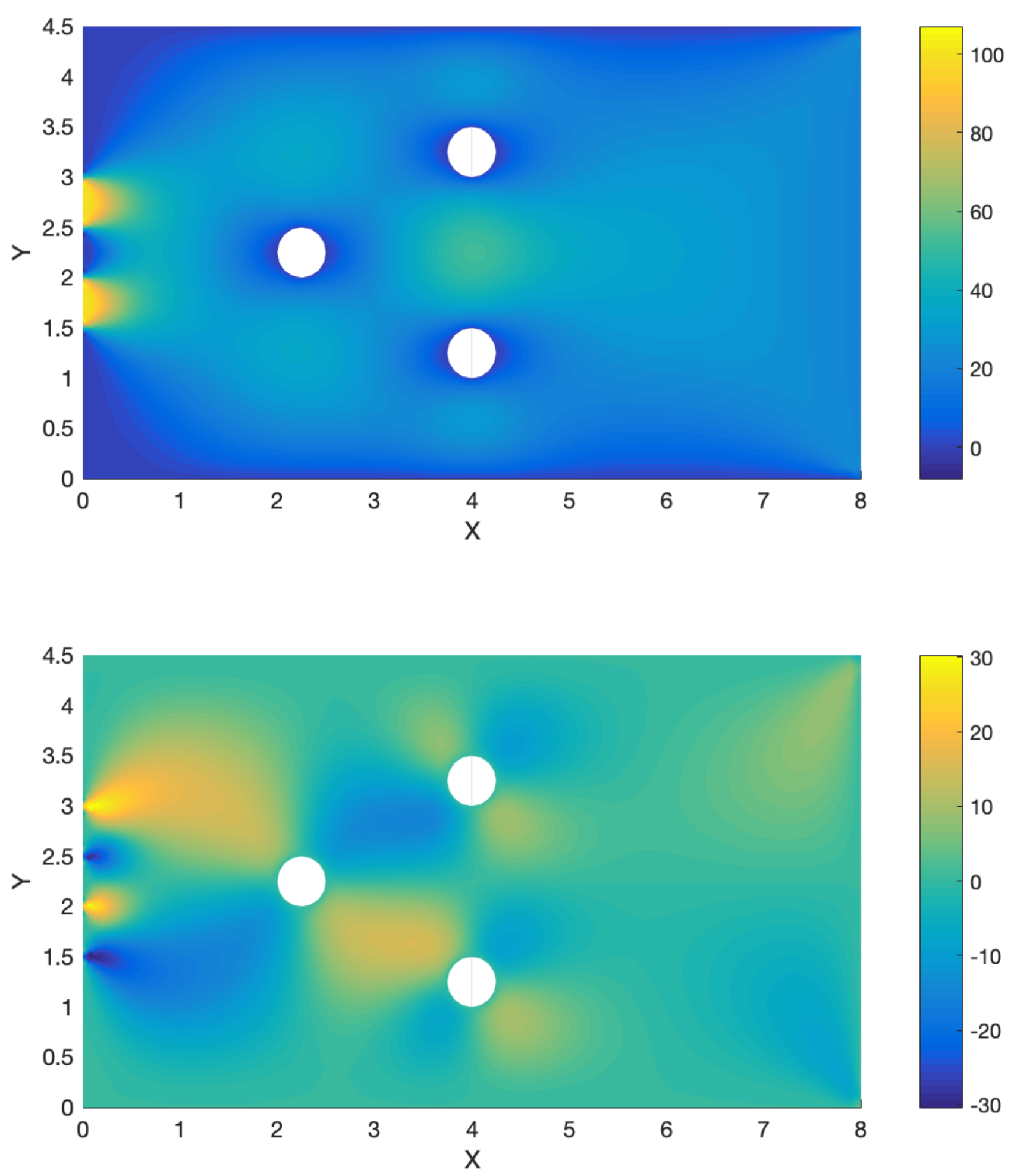}
\end{minipage}
\caption{Transient flow:  the velocity profile from the element $([P_2]^2,[P_1]^2,[P_1]^{2\times2},P_1,P_1)$:  $ \mathbf{u}_1$  (up);  $ \mathbf{u}_2$ (down). }
\label{fig017}
\end{figure}

\section{Conclusions}
In this paper, a generalized weak Galerkin finite element method was developed through the introduction of a generalized weak gradient for the Stokes problem. The new scheme is advantageous in that it allows the use of polynomials of arbitrary order and combination in the construction of the finite elements; yet achieving optimal order of convergence for all the combinations. From a theoretical point of view, the generalized weak Galerkin scheme is based on an extended {\em inf-sup} condition and the use of two stabilizers $s_1$ and $s_2$; one for the velocity and the other for the pressure. 

\newpage

\end{document}